\theoremstyle{plain}
\newtheorem{thrm}{Theorem}[section]
\newtheorem{lmm}[thrm]{Lemma}
\newtheorem{prpstn}[thrm]{Proposition}
\numberwithin{sblmm}{thrm}
\DeclareMathOperator*{\E}{\mathbb{E}}
\begin{document}
\author{James Maynard}
\address{Mathematical Institute, 24–-29 St Giles', Oxford, OX1 3LB}
\title{On the difference between consecutive primes}
\date{}
\thanks{Supported by EPSRC Doctoral Training Grant EP/P505216/1 }
\begin{abstract}
We show that the sum of squares of differences between consecutive primes $\sum_{p_n\le x}(p_{n+1}-p_n)^2$ is bounded by $x^{5/4+\epsilon}$ for $x$ sufficiently large and any fixed $\epsilon>0$. This reproduces an earlier result of Peck, which the author was initially unaware of. 
\end{abstract}
\maketitle
\section{Update: 16/01/2012}
The same result was obtained by Peck \cite{Peck} in his thesis in 1996. The methods used here are fundamentally the same. This work does not include any new results.

\section{Introduction and Context}
One central topic in number theory is understanding the distribution of prime numbers. When investigating the distribution of primes, it is natural to look at the gaps between them.

We let $p_n$ denote the $n^{th}$ prime number, and $d_n=p_{n+1}-p_n$ denote the $n^{th}$ prime gap.
\subsection{Average Size of Prime Gaps}\label{AverageGap}
The prime number theorem was conjectured by Gauss in 1792, and proven independently by Hadamard \cite{Hadamard} and de la Vall\'ee Poussin \cite{DeLaValleePoussin}. It states that
\begin{equation}
\pi(x)\sim \frac{x}{\log{x}}.
\end{equation}
This shows that
\begin{equation}
\E_{x\le p_n\le 2x}d_n\sim \log{x},
\end{equation}
and so the average gap between primes of size approximately $x$ is $\log{x}$. 

Since $\log x$ is small in comparison with $x$ (the size of primes we are considering), it is natural to consider how much larger $d_n$ can be than this average. The basic intuition is that prime numbers are reasonably regular, and so the difference between consecutive primes can not be unusually `large'.
\subsection{Numerical Evidence and Heuristics}
When obtaining results for prime gaps we are usually interested in large primes (primes outside of any computable range). Computable primes are not necessarily representative of all primes. (For example, Littlewood's result that $\pi(x)>$li$(x)$ infinitely often requires large primes. The first occurrence of this is well outside computational bounds). In particular, most results bounding the size of $d_n$ take the form $d_n\ll f(p_n)$ with no explicit size of the implied constant. This constant would likely dominate the bound in any computable region if it was effective and calculated.

That said, it can be interesting to look at the size of prime gaps in a computable region.
\begin{center}
\begin{tabular}{|c|c|c|}
\hline
N & $\max_{p_n\le N}d_n$ & $\max_{p_n\le N}(\log d_n)/(\log N)$\\
\hline
$10^1$	&	4 & 0.60\\
$10^2$	&	8 & 0.45\\
$10^3$	&	20 & 0.43\\
$10^4$	&	36 & 0.39\\
$10^5	$	& 72 & 0.37\\
$10^6	$	& 114 & 0.34\\
$10^7	$	& 154 & 0.31\\
$10^8	$	& 220 & 0.29\\
$10^9	$	& 282 & 0.27\\
$10^{10}$	&	354 & 0.25\\
$10^{11}$	&	464 & 0.24\\
$10^{12}$	&	540 & 0.23\\
$10^{13}$	&	674 & 0.22\\
$10^{14}$	&	804 & 0.21\\
$10^{15}$	&	906 & 0.20\\
$10^{16}$	&	1132 & 0.19\\
\hline
\end{tabular}
\end{center}
Relative to the size of the primes, the gaps between the primes of this size remain very small. Based on this very limited numerical evidence, it appears that 
\begin{equation*}
\log(d_n)/\log(p_n)\rightarrow0.
\end{equation*}
This is equivalent to the statement
\begin{equation}
d_n\ll p_n^{\epsilon}
\end{equation}
for any $\epsilon>0$.

Based on numerical evidence Legendre \cite{Legendre} conjectured in 1798 that there is always a prime between any pair of consecutive squares. Proving this requires an estimate of the strength $d_n\le 2p_n^{1/2}$. Cram\'er \cite{Cramer} and Shanks \cite{Shanks} have made stronger conjectures based on probabilistic models of the primes. Although more sophisticated models give slightly different expectations of the maximal asymptotic size of $d_n$ (as pointed out by Granville \cite{Granville}), there appears no reason to disbelieve a conjecture such as
\begin{equation}
d_n\ll (\log p_n)^{2+\epsilon}.
\end{equation}
Unfortunately even Legendre's conjecture seems beyond the current machinery for dealing with primes (even under the assumption of  strong conjectures such as the Riemann Hypothesis). A result as strong as Cram\'er's conjecture appears completely impossible to prove with the available techniques.

\subsection{Large Prime Gap Bounds}\label{LargeGaps}
Although we cannot prove Legendre's or Cram\'er's conjectures, we can still obtain non-trivial bounds on the size of $d_n$.

Bertrand's Postulate states that there is always a prime between any integer $n$ and $2n-2$. This was conjectured in 1845 by Bertrand \cite{BertrandConjecture} and proven in 1850 by Chebyshev \cite{ChebyschevBertrandProof}. There is therefore a prime between $p_n+1$ and $2p_n$, and so we must have 
\begin{equation}
d_n\le p_n.
\end{equation}
Further advancements were then made from analysing the distribution of zeroes of the Riemann Zeta function. Hoheisel \cite{Hoheisel} showed that
\begin{equation}\label{Hoheisel}
d_n\ll p_n^{32999/33000}.
\end{equation}
The exponent of $p_n$ in the right hand side has been repeatedly reduced by different authors including Heilbronn \cite{Heilbronn}, Tchudakoff \cite{Tchudakoff}, Ingham \cite{Ingham} and Huxley \cite{Huxley}. These improvements were largely down to the development of more sophisticated methods to analyse the distribution of the zeroes of $\zeta(s)$. The most recent result is due to Baker, Harman and Pintz \cite{BakerEtAl}, which shows that
\begin{equation}
d_n\ll p_n^{21/40}.
\end{equation}
Better results can be obtained if we assume conjectures about the Riemann Zeta function.

Riemann famously conjectured that all the non-trivial zeroes of $\zeta(s)$ have real part $1/2$. Cram\'er \cite{Cramer} showed that assuming the Riemann Hypothesis
\begin{equation}
d_n\ll p_n^{1/2}\log{p_n}.
\end{equation}
The density hypothesis states that the number of zeroes $N(\sigma,T)$ of $\zeta(s)$ with absolute value of imaginary part less than $T$ and real part greater than $\sigma$ satisfies $N(\sigma,T)\ll T^{2(1-\sigma)}\log^A{T}$ for some constant $A$. This follows from the Riemann Hypothesis or the Lindel\"of Hypothesis. Assuming this weaker hypothesis one can prove
\begin{equation}
d_n\ll p_n^{1/2+\epsilon}
\end{equation}
for any $\epsilon>0$. Both of these conditional results would therefore show that there is always a prime in the interval $[x,x+x^{1/2+\epsilon}]$ for $x$ sufficiently large.
\subsection{Lower Bounds on Large Prime Gaps}
One can construct sequences of consecutive composite integers to explicitly demonstrate large gaps between primes. For example, 
\begin{equation}
j+\prod_{i\le n}p_i
\end{equation}
is clearly composite for $2\le j\le p_n$. This (and small refinements) show that $d_n\ge C\log{n}$ for some constant $C$.

Westzynthius \cite{Westzynthius} showed in 1931 that by carefully sieving certain primes one can have gaps between primes which are larger than any constant multiple of the average gap $\log{p_n}$. Erd\H os \cite{Erdos} and Rankin \cite{Rankin} subsequently improved the size of this lower bound on $d_n$ using similar ideas. The best current result is due to Pintz \cite{Pintz} which states that for infinitely many integers $n$ we have
\begin{equation}
d_n\ge (2e^\gamma+o(1)) \frac{(\log{n})(\log\log{n})(\log\log\log\log{n})}{(\log\log\log{n})^2}.
\end{equation}
Note that this lower bound is only slightly larger than the average bound $\log{n}$, and is less than the upper bound of $\log^2{n}$ predicted by Cram\'er's conjecture.
\subsection{Frequency of Large Prime Gaps}
The Results of Section \ref{AverageGap} give a precise asymptotic value of the $L^1$ norm of $d_n$ from the Prime Number Theorem.

The results in Section \ref{LargeGaps} give bounds on the $L^\infty$ norm of $d_n$, but fall short of what the expected bound on gaps between primes should be, even with the assumption of the Riemann Hypothesis. It seems with the current technology we cannot hope to prove anything close to the true size of the $L^\infty$ bound.

It is therefore natural to look at the $L^2$ norm of $d_n$. Even if we cannot show that unusually large gaps do not occur, we can hope to show that the vast majority of prime gaps are much smaller and that large gaps, should they exist, are infrequent.

Selberg \cite{Selberg} proved, assuming the Riemann Hypothesis, that
\begin{equation}
\sum_{p_n\le x}d_n^2\ll x(\log{x})^3.
\end{equation}
In particular, this shows that almost all intervals $[x,x+(\log{x})^{2+\epsilon}]$ contain a prime, and that the root mean square gap between primes is $\ll (\log{x})^2$. These results therefore show (assuming the Riemann hypothesis) that at least a majority of gaps satisfy bounds similar to those predicted by Cram\'er's conjecture.

Yu \cite{Yu} improved a result of Heath-Brown \cite{HBII} to prove,  assuming the Lindel\"of Hypothesis, that
\begin{equation}
\sum_{p_n\le x}d_n^2\ll x^{1+\epsilon}
\end{equation}
for any $\epsilon>0$. Both of these results show that almost all intervals $[x,x+x^\epsilon]$ contain a prime. Thus a claim `$d_n\ll p_n^\epsilon$' would at least hold for almost all prime gaps.

The best unconditional $L^2$ result thus far is due to Heath-Brown \cite{HBIII}, who proved that
\begin{equation}
\sum_{p_n\le x}d_n^2\ll x^{23/18+\epsilon}.
\end{equation}
This shows that $\sum_{d_n\ge x^{a}}d_n\ll x^{23/18-a+\epsilon}$. It immediately follows that $d_n\ll p_n^{23/36+\epsilon}$ and almost all intervals $[x,x+x^{5/18+\epsilon}]$ contain a prime. Although both of these can be improved with alternative methods, we note that the exponent of $5/18+\epsilon$ is much smaller than the Riemann Hypothesis bound of $1/2+\epsilon$, and so being able to ignore a small number of possible large differences makes the problem much more tractable.

These results should be compared with the lower bound obtained by the Cauchy-Schwartz inequality and the prime number theorem, which gives
\begin{equation}
\sum_{p_n\le x}d_n^2\gg x\log{x}.
\end{equation}

%\newpage
%\input{notation.tex}
\newpage
\section{Main Result}
We aim to improve on Heath-Brown's result \cite{HBIII} and investigate the values of $\nu$ for which we can show
\begin{equation}\label{MainResult}
\sum_{p_n\le x}d_n^2\ll x^{1+\nu+\epsilon}
\end{equation}
for any $\epsilon>0$.

We do this by obtaining $L^2$, $L^4$ and $L^\infty$ bounds on the Chebyschev function $\psi(x)=\sum_{n\le x}\Lambda(n)$ in intervals of size $\tau$.

In particular, we wish to prove:
\begin{thrm}
\[\sum_{p_n\le x}d_n^2\ll x^{5/4+\epsilon}\]
for any $\epsilon>0$.
\end{thrm}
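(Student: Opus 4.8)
The plan is to establish \eqref{MainResult} with $\nu=1/4$ by reducing the sum $\sum_{p_n\le x}d_n^2$ to bounds for the second and fourth moments of $E(t,h):=\psi(t+h)-\psi(t)-h$, with an $L^\infty$-type input used to truncate the range of relevant gap sizes. Since $\sum_{p_n\le x}d_n\ll x$, the contribution to $\sum_{p_n\le x}d_n^2$ of the gaps with $d_n\le x^{1/4}$ is at most $x^{1/4}\sum_{p_n\le x}d_n\ll x^{5/4}$. At the other extreme, a bound $d_n\ll p_n^{\theta}$ with some fixed $\theta<1$ — for instance the Baker--Harman--Pintz exponent $\theta=21/40$ — shows that no gap with $p_n\le x$ exceeds $x^{\theta+\epsilon}$. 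So it is enough to treat gaps of dyadic size $H$ with $x^{1/4}\le H\le x^{\theta+\epsilon}$, of which there are $O(\log x)$, and to prove $H^2 N_H\ll x^{5/4+\epsilon}$ for each such $H$, where $N_H=\#\{p_n\le x:\ H\le d_n<2H\}$.

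The link to moments is the classical fact that a large gap forces $|E(t,h)|$ to be large on a long interval. Fix $H$ in the above range, put $h=H/3$, and suppose $p_n\le x$ with $H\le d_n<2H$. For every $t$ in the interval $[p_n,\,p_{n+1}-h]$, which has length $\ge H-h\gg H$, the interval $(t,t+h]$ contains no prime, so $\psi(t+h)-\psi(t)$ is made up only of the contributions of prime powers $p^k$ with $k\ge2$ lying in $(t,t+h]$; there are $O\bigl((hx^{-1/2}+1)\log^2 x\bigr)$ of these, which is $o(h)$ throughout our range, and hence $|E(t,h)|\ge h/2\gg H$ on this interval. The intervals attached to distinct $n$ lie in distinct prime gaps and so are disjoint, whence for $k=2$ and $k=4$
\[
N_H\cdot H\cdot H^{k}\ \ll\ \int_0^{2x}|E(t,h)|^{k}\,dt\ =:\ M_k(2x,h),
\]
and therefore $N_H\ll\min\bigl(x/H,\ M_2(2x,h)/H^3,\ M_4(2x,h)/H^5\bigr)$, the bound $x/H$ being immediate from $N_HH\ll x$.

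It then remains to prove $L^2$ and $L^4$ estimates for $E(t,h)$, uniformly for $h$ in a range of the form $x^{1/4}\le h\le x^{\theta+\epsilon}$. Starting from the truncated explicit formula $E(t,h)=-\sum_{|\gamma|\le T}\rho^{-1}\bigl((t+h)^\rho-t^\rho\bigr)+O(tT^{-1}\log^2 t)$, in which $\rho=\beta+i\gamma$ runs over the non-trivial zeros of $\zeta$ and $T$ is a large fixed power of $x$, and using the bound $|\rho^{-1}((t+h)^\rho-t^\rho)|\ll\min\bigl(ht^{\beta-1},\,t^{\beta}/|\gamma|\bigr)$, one expands the square (respectively the fourth power) and integrates over $t$; the diagonal terms are controlled by weighted zero-counts and the off-diagonal ones by the standard saving $(1+|\gamma-\gamma'|)^{-1}$. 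A zero-density estimate such as Huxley's $N(\sigma,T)\ll T^{12(1-\sigma)/5}(\log T)^{A}$ deals with the second moment; the fourth moment needs in addition a fourth-power mean-value input — Ingham's $\int_0^T|\zeta(\tfrac12+it)|^4\,dt\ll T(\log T)^4$, or its Dirichlet-polynomial analogue for sums over primes, which brings in the divisor function and hence a second-moment bound for it in short intervals. This produces estimates of the shape $M_2(x,h)\ll h^2 x^{1-\delta_2+\epsilon}+hx^{1+\epsilon}$ and $M_4(x,h)\ll h^4 x^{1-\delta_4+\epsilon}+\cdots$ with $\delta_2,\delta_4>0$; the essential point is that the fourth moment exhibits genuine extra cancellation, so that $M_4$ is of roughly the square-root-cancellation size $M_2^2/x$ rather than of the trivial size $h^2 M_2$. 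It is this $L^4$ input that goes beyond Heath-Brown's treatment, which used only $L^2$ and $L^\infty$ information.

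Finally one inserts the three bounds for $N_H$ into $H^2N_H$ and optimises over the dyadic values $H\in[x^{1/4},x^{\theta+\epsilon}]$: the trivial bound $x/H$ wins near $H=x^{1/4}$, the $L^2$ bound on an intermediate range, and the $L^4$ bound on the larger $H$, and a short computation shows $H^2N_H\ll x^{5/4+\epsilon}$ in every case; summing the $O(\log x)$ contributions and adding the $\ll x^{5/4}$ coming from small gaps gives the theorem. The main obstacle is the fourth-moment estimate of the previous paragraph: one must extract enough cancellation from $\int|E(t,h)|^4$, in effect squaring the saving available in the second moment, and must do so uniformly up to $h$ of size about $x^{\theta}$, since it is exactly at the top of this range that neither the trivial nor the $L^2$ bound is strong enough. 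The rest is a lengthy but essentially routine unwinding of the explicit formula and of known mean-value theorems.
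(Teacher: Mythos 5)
Your reduction of $\sum d_n^2$ to dyadic gap-counts $N_H$, and the observation that a gap of size $H$ forces $|E(t,h)|\gg H$ on an interval of length $\gg H$, so that
\[
N_H\,H^{k+1}\ll\int_0^{2x}|E(t,h)|^k\,dt,
\]
is exactly the framework of the paper (see the passage from \eqref{Elarge1} onwards, where $y$ ranges over $(p_n,(p_n+p_{n+1})/2)$) and of Heath-Brown before it, so that part is fine. But there are two serious problems with the rest.

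First, a factual error that matters: Heath-Brown did \emph{not} use only $L^2$ and $L^\infty$ information. His papers \cite{HBI}, \cite{HBII}, \cite{HBIII}, which the paper explicitly says it follows, already use precisely the $L^\infty$/$L^2$/$L^4$ triple that you propose (the paper's Lemma~\ref{BasicLemma} is this triple, transcribed from Heath-Brown into the Dirichlet-polynomial setting). So adding the fourth moment is not the new idea, and it is not what pushes the exponent below $23/18$.

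Second, and this is the fatal gap: you propose to estimate the moments $M_2,M_4$ by starting from the truncated explicit formula and applying zero-density estimates such as Huxley's $N(\sigma,T)\ll T^{12(1-\sigma)/5+\epsilon}$ together with the fourth power moment. That is Heath-Brown's method verbatim, and the paper states in Section~5 that it yields only $\sum d_n^2\ll x^{23/18+\epsilon}$, not $x^{5/4+\epsilon}$. The obstruction is a critical configuration near $\sigma=3/4$ (the paper discusses it explicitly: $\sigma_i=3/4$, $x_1=\tau^{9/5}$, $N_i=\tau^{2/5}$). Working with zeros, one has no leverage against it, because the zero-detection method throws away all structural information about the Dirichlet polynomials beyond a single length parameter. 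The actual improvement comes from replacing the explicit formula with Perron's formula applied to an explicit combinatorial identity for $\Lambda$ (a Heath-Brown/Vaughan-type decomposition), so that one works directly with products $S_1(s)\cdots S_{2k}(s)$ of Dirichlet polynomials whose lengths satisfy the rigid constraints $\prod_i N_i\asymp x$, $N_i\le (3x)^{1/k}$ for $i\le k$, and $N_i\le x^{19/20}$. These constraints make the $\sigma=3/4$ critical case impossible (one cannot have many polynomials each of length $\tau^{2/5}$ multiplying to $\tau^{9/5}$), and one then needs the bespoke large-value estimates of Lemmas~\ref{ZetaBounds}--\ref{SClose1}, which exploit the fact that the long factors $S_i$ with $i>k$ have coefficients $1$ or $\log n$ (allowing the fourth and twelfth power moments of $\zeta$ to be used directly on them), plus Vinogradov--Korobov and van der Corput inputs for $\sigma$ near $1$. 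None of this is visible in your proposal, which as written would reproduce $23/18$ rather than prove the theorem.

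In short: your reduction to moments is correct and standard, but the engine you propose for the moment estimates (explicit formula plus zero-density) is the old one and is provably insufficient; the missing idea is the Perron/Dirichlet-polynomial decomposition that retains the product-of-lengths constraint, together with the accompanying tailored large-value lemmas.
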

By dyadic subdivision and replacing $\epsilon$ by a finite multiple, we see it is sufficient to prove the following proposition.
\begin{prpstn}\label{mainprop}
For $0\le \tau\le x$ we have
\[\sum_{\substack{4x/\tau \le d_n\le 8x/\tau \\ x \le p_n\le 2x}}d_n^2\ll x^{5/4+10\epsilon}\]
for any $\epsilon>0$
\end{prpstn}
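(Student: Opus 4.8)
The plan is to translate the sum over large prime gaps into a mean value of the short‑interval error term
\[
R(t) \;:=\; \psi(t+H) - \psi(t) - H, \qquad H := x/\tau ,
\]
and then to bound that mean value by feeding zero‑density and moment information about $\zeta$ through the explicit formula. First I would dispose of the small gaps: if $H \le x^{1/4}$, i.e. $\tau \ge x^{3/4}$, then, since $d_n \le 8H$ on the range of summation and $\sum_{x\le p_n\le 2x} d_n \ll x$ by telescoping, the whole sum is at most $8H\sum_{x\le p_n\le 2x}d_n \ll Hx \le x^{5/4}$. From now on $H \ge x^{1/4}$; and since $d_n \ll p_n^{21/40}$ by Baker--Harman--Pintz, the sum is empty once $4x/\tau$ exceeds a fixed multiple of $x^{21/40}$, so effectively one needs only $x^{1/4}\le H\le x^{21/40}$.

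Next I would reduce to a mean value, as follows. Fix $n$ with $4H \le d_n \le 8H$ and $x\le p_n\le 2x$. For every $t$ in the interval $[p_n,\,p_{n+1}-H]$, which has length $d_n - H \ge 3H$, the window $(t,t+H]$ lies strictly between $p_n$ and $p_{n+1}$ and so contains no prime; hence $\psi(t+H)-\psi(t)$ is accounted for by prime powers $p^k$ with $k\ge 2$ and is therefore $O\!\big((1+Hx^{-1/2})\log x\big) = o(H)$ on our range. Thus $R(t) = -H + o(H)$, so $|R(t)| \ge \tfrac12 H$ throughout an interval of length $\ge 3H$, and these intervals are pairwise disjoint and contained in $[x,3x]$. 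Writing $E := \{\, t\in[x,3x] : |R(t)| \ge \tfrac12 H \,\}$ for the ``prime-free set'', the number of admissible $n$ is $\ll |E|/H$, and hence
\[
\sum_{\substack{4H\le d_n\le 8H\\ x\le p_n\le 2x}} d_n^2 \;\ll\; H^2\cdot\frac{|E|}{H} \;=\; H\,|E| .
\]
So it suffices to prove $|E| \ll x^{5/4+\epsilon}/H$ for $x^{1/4}\le H\le x^{21/40}$.

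The analytic core is exactly this bound on $|E|$, which I would obtain by combining three estimates for $R(t)$ on $[x,3x]$, all issuing from the truncated explicit formula
\[
R(t) = -\sum_{|\gamma|\le T}\frac{(t+H)^\rho - t^\rho}{\rho} + O\!\Big(\frac{x\log^2 x}{T}\Big), \qquad 2\le T\le x,
\]
in which $\rho^{-1}\big((t+H)^\rho - t^\rho\big) = \int_t^{t+H} u^{\rho-1}\,du$ has modulus $\ll x^{\beta-1}\min(H, x/|\gamma|)$ and $T$ is taken to be a suitable power of $x$ so the tail is negligible. The $L^\infty$ estimate uses the Vinogradov--Korobov zero-free region; it controls, in particular, the contribution of the zeros with $\beta$ closest to $1$, and shows outright that $E$ is empty when $H$ is above a fixed power of $x$ (consistently with the Baker--Harman--Pintz input). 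The $L^2$ estimate comes from squaring out the sum over zeros, disposing of the off-diagonal terms with the standard bound $\sum_{\gamma_2}(1+|\gamma_1-\gamma_2|)^{-1}\ll\log^2 x$, and inserting the Ingham and Huxley zero-density estimates $N(\sigma,T)\ll T^{3(1-\sigma)/(2-\sigma)+\epsilon}$ and $N(\sigma,T)\ll T^{12(1-\sigma)/5+\epsilon}$. The $L^4$ estimate is obtained from the same expansion together with the classical fourth-power moment $\int_0^T|\zeta(\tfrac12+it)|^4\,dt \ll T\log^4 T$, which handles the zeros near the critical line far more efficiently than the second moment. Combining, one estimates $|E|$ by Hölder's inequality, $|E|\,H^2 \le \int_E |R|^2 \le |E|^{1/2}\big(\int_x^{3x}|R|^4\big)^{1/2}$, whence $|E| \ll H^{-4}\int_x^{3x}|R|^4\,dt$, and one checks this is $\ll x^{5/4+\epsilon}/H$.

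The hard part is this last step. Used by itself the $L^2$ estimate only recovers Heath-Brown's exponent $23/18$; pushing it to $5/4$ forces one to split the zeros according to the size of both $\beta-\tfrac12$ and $|\gamma|$ and then, for \emph{every} $H$ in $[x^{1/4},x^{21/40}]$ simultaneously, to balance the $L^4$ contribution of the zeros near the critical line against the Huxley-density contribution of the zeros with real part bounded away from $\tfrac12$ and the zero-free-region contribution of those with real part near $1$, all the while choosing the truncation height $T$ compatibly. Verifying that these ingredients conspire to yield exactly the exponent $5/4$ uniformly across the whole range of $H$ is the delicate point; the remaining steps are routine.
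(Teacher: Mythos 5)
Your reduction is sound and matches the paper's: you pass to a prime-free set $E$ on which $|\psi(t+H)-\psi(t)-H|\gg H$ (the paper uses multiplicative windows $(y,y(1+1/\tau)]$ rather than additive $(t,t+H]$, but this is cosmetic), you correctly estimate the prime-power contribution, and your deduction $\sum d_n^2 \ll H\,|E|$ together with the goal $|E|\ll x^{5/4+\epsilon}/H$ is exactly the paper's Proposition~\ref{PropMainResult} in disguise. The $L^\infty/L^2/L^4$ philosophy is also the right scaffolding.

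The gap is in the analytic engine. Your plan --- truncated explicit formula over zeros, then Ingham/Huxley zero-density inputs for $L^2$, the fourth moment of $\zeta$ for $L^4$, and Vinogradov--Korobov for $L^\infty$ --- is precisely the method of Heath-Brown's third paper \cite{HBIII}, and the paper is explicit that this route is intrinsically stuck at the exponent $23/18$. The point is that the zero-density bounds $N(\sigma,T)$ that you feed in are themselves proved by the zero-detection method, which reduces to large-value estimates for a \emph{single} Dirichlet polynomial of adjustable length; once you have aggregated the zeros into a density function you have discarded all structural information about how those polynomials arose. Consequently, in the balancing act you describe, the critical configuration near $\sigma=3/4$ (in the paper's notation: $\sigma_i=3/4$ for all $i$, $x_1=\tau^{9/5}$, $N_i=\tau^{2/5}$ or $1/2$) cannot be ruled out, and that configuration forces $23/18$.

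The idea you are missing --- the one thing that carries the argument from $23/18$ to $5/4$ --- is to never pass through zeros at all. The paper applies a Heath-Brown/Vaughan-type combinatorial identity to $\Lambda(n)$ and then Perron's formula \emph{directly} to the short-interval sum, obtaining the error term $E_4$ as a contour integral over a product $S_1(s)\cdots S_{2k}(s)$ of Dirichlet polynomials with hard structural constraints inherited from the identity: $\prod_{i}N_i\asymp x$, $N_i\le x^{19/20}$ for every $i$, and $N_i\le(3x)^{1/k}$ for the first $k$ factors (those with M\"obius coefficients). It is these constraints --- unavailable once you work with $N(\sigma,T)$ --- that exclude the critical configuration and allow the supplementary tools (Montgomery/Huxley large-value theorems, Heath-Brown's $R^*$ bound, and the fourth/twelfth moments of $\zeta$ applied only to the zeta-like factors) to close the gap. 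Without this reformulation, the delicate optimization in your final paragraph will reproduce $23/18$ rather than $5/4$.
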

\newpage
\section{Initial Argument}
Proposition \ref{mainprop} holds trivially for $d_n\ll x^{1/4+\epsilon}$ or (by the result of Baker, Harman and Pintz \cite{BakerEtAl}[Theorem 1]) for $d_n\gg x^{21/40}$. Thus we only need to consider
\begin{equation}\label{TauSize}
x^{19/40}\le \tau\le x^{3/4-\epsilon}.
\end{equation}
We follow essentially exactly the same method as Heath-Brown in \cite{HBII} in this section, except that we use Perron's formula to get an estimate for $\psi(x)$ in terms of Dirichlet polynomials instead of zeroes of $\zeta(s)$. (An idea suggested by Heath-Brown in \cite{HBHuxley}). It is the greater control which we get from using this setup which enables us to improve the exponent from $23/18$ to $5/4$.
\subsection{A Combinatorial Identity and Perron's Formula}
We start with the identity:
\begin{equation}
-\frac{\zeta'}{\zeta}(s)=-\frac{\zeta'}{\zeta}(s)(1-M_x(s)\zeta(s))^k+\sum_{j=1}^k(-1)^{j}\binom{k}{j}M_x(s)^j\zeta(s)^{j-1}\zeta'(s),
\end{equation}
where
\begin{align}
&k\in \mathbb{Z}^+ \text{is a positive constant},
&M_x(s)=\sum_{n\le (3x)^{1/k}}\mu(n)n^{-s}.
\end{align}
We will later (equation \eqref{KChoice}) choose $k=60$, since this is sufficient for our purposes.

By our choice of $M_x$ the term $(1-M_x(s)\zeta(s))^k\zeta'(s)/\zeta(s)$ makes no contribution to the coefficient of $n^{-s}$ for $n\le 3x$. Hence equating coefficients of $n^{-s}$ of both sides for $n\le 3x$ gives
\begin{equation}
\Lambda(n)=\sum_{j=1}^{k}(-1)^j\binom{k}{j}K^{(j)}(n)=\sum_{j=1}^kc_jK^{(j)}(n)
\end{equation}
where
\begin{equation}K^{(j)}(n)=\sum_{\substack{\prod_1^{2j}n_i=n\\ n_i\le (3x)^{1/k}\text{ for }i\le j}}\\ \mu(n_1)\dots\mu(n_j)\log{n_{2j}}.\end{equation}
We split $K^{(j)}(n)$ into dyadic intervals for each $n_i$, therefore expressing $\Lambda(n)$ as a linear combination of $O(\log^{2k}{x})$ sums of the form
\begin{equation}J_{N_1,N_2,\dots,N_{2k}}(n)=\sum_{\substack{n_i\in(N_i,2N_i]\forall i\\ \prod_in_i=n}}\mu(n_1)\dots\mu(n_k)\log{n_{2k}}.\end{equation}
We note that
\begin{equation}\label{MuPieceBound}
N_i\le (3x)^{1/k}
\end{equation}
for $i\le k$. We account for the cases when $j<k$ by setting $N_i=1/2$ (and so $n_i=1$) for the `extra' variables.

We now put
\begin{equation}
S_i(s)=
\begin{cases}
\sum_{N_i<n_i\le 2N_i}\mu(n_i)n_i^{-s},&i\le k\\
\sum_{N_i<n_i\le 2N_i}n_i^{-s}&k<i<2k\\
\sum_{N_i<n_i\le 2N_i}(\log{n_i})n_i^{-s}&i=2k
\end{cases}
\end{equation}
and consider the Dirichlet polynomial
\begin{equation}\label{product}
\sum a_nn^{-s}=\sum_{j=1}^kc_j\sum_{\substack{(N_i)_1^{2k}\\N_i\le (3x)^{1/k}\text{ for }i\le k\\ 2^{-2k}x\le\prod_1^{2k}N_i\le 3x\\N_i=1/2\text{ if }j<i\le k\text{ or }k+j\le i<2k}}S_1(s)S_2(s)\dots S_{2k}(s).
\end{equation}
By the above identity, for $x\le n\le 3x$ we have
\begin{equation}a_n=\Lambda(n).\end{equation}
In particular, for $x\le y \le 2x$ and $\tau \ge 2$
\begin{equation}\label{Expansion1}
\psi(y+y/\tau)-\psi(y)=\sum_{y<n\le y+y/\tau}a_n.
\end{equation}
We separate out the case when one of the $N_i>x^{19/20}$, since such very long polynomials require a slightly different treatment.

Thus
\begin{equation}\label{fgSplit}
\sum a_nn^{-s}=\sum f_nn^{-s}+\sum g_nn^{-s}
\end{equation}
where
\begin{equation}\label{LargeCase}
\sum f_nn^{-s}=\sum_{j=1}^kc_j\sum_{\substack{(N_i)_1^{2k}\\N_i\le (3x)^{1/k}\text{ for }i\le k\\2^{-2k}x\le\prod_1^{2k}N_i\le 3x\\ N_i=1/2\text{ if }j<i\le k\text{ or }j+k\le i<2k\\N_i>x^{19/20}\text{ for some }i}}S_1(s)S_2(s)\dots S_{2k}(s),
\end{equation}
and
\begin{equation}\label{NormalCase}
\sum g_nn^{-s}=\sum_{j=1}^kc_j\sum_{\substack{(N_i)_1^{2k}\\N_i\le (3x)^{1/k}\text{ for }i\le k\\2^{-2k}x\le\prod_1^{2k}N_i\le 3x\\N_i=1/2\text{ if }j<i\le k\text{ or }j+k\le i<2k\\N_i\le x^{19/20}\text{ for all }i}}S_1(s)S_2(s)\dots S_{2k}(s).
\end{equation}
We first consider $\sum f_n$. We separate the exceptionally long polynomial $S_{i_0}(s)$, and just consider the remaining product of polynomials as a single polynomial. Since $N_i\le (3x)^{1/k}\le x^{19/20}$ for $i\le k$ we must have $i_0>k$ and so the exceptional polynomial must have all coefficients 1 or $\log{n}$. We will assume that $i_0\ne 2k$, so all the coefficients are identically $1$. The alternative case $i_0=2k$ may be handled similarly.
\begin{equation}\sum_{y<n\le y+y/\tau}f_n=\sum_{j=1}^k\sum_{N_{i_0}>x^{19/20}}\sum_{2^{-2k}x/N_{i_0}\le M\le3x/N_{i_0}}\sum_{\substack{m,n\\y<mn\le y+y/\tau\\ M<m\le 2^{2k-1}M\\N_{i_0}<n\le 2N_{i_0}}}b_m^{(j)}\end{equation}
for some coefficients $b_m^{(j)}\ll x^\epsilon$.

We let
\begin{equation}
\mathcal{B}_\tau=\left\{z:x\le z\le 2x, mN_{i_0}>z>\frac{mN_{i_0}}{1+1/\tau}\text{ for some }m, N_{i_0}\right\}
\end{equation}
and consider separately $y\notin \mathcal{B}_\tau$ and $y\in \mathcal{B}_\tau$. We note that 
\begin{equation}\text{meas}(\mathcal{B}_\tau)\ll \sum_{m,N_{i_0}} \frac{mN_{i_0}}{\tau}\ll\sum_{m,N_{i_0}} \frac{x}{\tau}\ll \frac{x^{21/20+\epsilon}}{\tau}.\label{Bsize}
\end{equation}
Thus in particular $[x,2x]-\mathcal{B_\tau}\ne \emptyset$ and $\mathcal{B_\tau}$ represents only a small subset of $y$ with $x\le y\le 2x$.

If $y\notin \mathcal{B}_\tau$ then for any $m$ we have
\begin{equation}
\#\{n:\frac{y}{m}<n\le \frac{y(1+1/\tau)}{m}, N_{i_0}<n\le 2N_{i_0}\}=
\begin{cases}
\frac{y}{\tau m}+O(1),\quad &mN_{i_0}<y\le 2mN_{i_0}\\
0,&\text{otherwise}
\end{cases}
\end{equation}
Thus the sum over $n$ is over $y/(\tau m)+O(1)$ terms if $mN_{i_0}<y<2mN_{i_0}$ or is empty. Hence
\begin{align}
\sum_{y<n\le y+y/\tau}f_n&=\textbf{1}_{\mathcal{B}^C_\tau}(y)\sum_{\substack{m,j,M,N_{i_0}\\ mN_{i_0}<y<2mN_{i_0}}}\left(\frac{b_m^{(j)}y}{\tau m}+O(x^\epsilon)\right)+\textbf{1}_{\mathcal{B}_\tau}(y)\sum_{y<n\le y+y/\tau}f_n\nonumber\\
&=\textbf{1}_{\mathcal{B}^C_\tau}(y)\frac{A_1(y)}{\tau}+O\left(\sum_M Mx^{\epsilon}\right)+\textbf{1}_{\mathcal{B}_\tau}(y)\left(\sum_{y\le n\le y+y/\tau}f_n\right),
\end{align}
where we have defined
\begin{equation}A_1(y)=\sum_{\substack{m,j,M,N_{i_0}\\mN_{i_0}<y<2mN_{i_0}}}\frac{b_m^{(j)}y}{m}.\end{equation}
We note that $A_1(y)$ is independent of $\tau$. Since $M\ll x/N_{i_0}\ll x^{1/20}$ we have
\begin{align}
\sum_{y<n\le y+y/\tau}f_n&=\frac{A_1(y)}{\tau}+O(x^{1/20+2\epsilon})+\textbf{1}_{\mathcal{B}_\tau}(y)O\left(\sum_{y\le n\le y+y/\tau}f_n+A_1(y)/\tau\right)\nonumber\\
&=\frac{A_1(y)}{\tau}+E_1+E_2.\label{fpart}
\end{align}
where $E_1=O(x^{1/19})$ and $E_2=0$ when $y\notin \mathcal{B}_\tau(y)$.

This gives us a `main term' $A_1(y)/\tau$, which we will estimate in Lemma \ref{MainEstimate}, and two error terms $E_1$ and $E_2$. $E_1$ is always small, and so causes no problems. $E_2$ can only be large when $y\in\mathcal{B_\tau}$, which is a suitably small set to cause us no problems.

We now consider $\sum g_n n^{-s}$. To ease notation we put
\begin{equation}S(s)=\prod_{i=1}^{2k}S_i(s),\end{equation}
and we let the unlabelled sum $\displaystyle\sum$ represent the sum
\begin{equation}\sum_{j=1}^kc_j\sum_{\substack{(N_i)_1^{2k}\\N_i\le (3x)^{1/k}\text{ for }i\le k\\2^{-2k}x\le\prod_1^{2k}N_i\le 3x\\N_i=1/2\text{ if }j<i\le k\text{ or }j+k\le i<2k\\N_i\le x^{19/20}\text{ for all }i}}\end{equation}
which appears in the right hand side of \eqref{NormalCase}.

Perron's formula states that for $T>2$, $x>0$, $x\ne 1$ and $1<\sigma\le 2$ we have
\begin{equation}\frac{1}{2\pi i}\int_{\sigma-iT}^{\sigma+iT}\frac{x^s}{s}ds=H(x)+O\left(\frac{x^\sigma}{T|\log{x}|}\right),\end{equation}
where $H(x)=0$ for $x<1$ and $H(x)=1$ for $x>1$.

Using Perron's formula and putting $c=1+1/\log{y}$:
\begin{align}
\sum_{y<n\le y+y/\tau}g_n&=\frac{1}{2\pi i}\int_{c-iT_0}^{c+iT_0}\frac{y^s}{s}\left(\left(1+\frac{1}{\tau}\right)^s-1\right)\left(\sum S(s)\right)ds+E_3 \nonumber\\ 
&=\frac{1}{2\pi i}\int_{c-iT_1}^{c+iT_1}\frac{y^s}{s}\left(\left(1+\frac{1}{\tau}\right)^s-1\right)\left(\sum S(s)\right)ds+E_3+E_4 \nonumber\\ 
&=\frac{1}{2\pi i}\int_{c-iT_1}^{c+iT_1}\frac{y^s}{\tau}\left(\sum S(s)\right)ds+E_3+E_4+E_5 \nonumber\\ 
&=\frac{A_2(y)}{\tau}+E_3+E_4+E_5. \label{errors}
\end{align}
Here
\begin{align}
A_2(y)&=\frac{1}{2\pi i}\int_{c-iT_1}^{c+iT_1}y^s\left(\sum S(s)\right)ds,\\
E_3&=E_3(y,\tau)=O\left(\frac{y\log^2{y}}{T_0}+\log{y}\right),\label{E1}\\
E_4&=E_4(y,\tau)=O\left(\left|\int_{c+iT_1}^{c+iT_0} y^sC_1(s)\left(\sum S(s)\right)ds\right|\right),\label{E2}\\
E_5&=E_5(y,\tau)=O\left(\left|\int_{c-iT_1}^{c+iT_1} y^sC_2(s)\left(\sum S(s)\right)ds\right|\right),\label{E3}\\
C_1(s)&=\frac{1}{s}\left(\left(1+\frac{1}{\tau}\right)^s-1\right),\\
C_2(s)&=\frac{1}{s}\left(\left(1+\frac{1}{\tau}\right)^s-1-\frac{s}{\tau}\right).
\end{align}
We note that
\begin{align}
A_2(y)&\text{ is independent of }\tau,\label{A2Indep}\\
C_1(s)&\ll \frac{1}{\tau},\label{C1Bound}\\
C_2(s)&\ll \frac{|s|}{\tau^2}.\label{C2Bound}
\end{align}
Therefore we have a `main term' $A_2(y)/\tau$ and error terms $E_3,E_4$ and $E_5$. We will show that $E_3$ and $E_5$ are small, and so do not cause any problems in Lemma \ref{MainEstimate} below. If we can show that $E_4$ is only large on a small set, then we will have a suitably accurate estimate of $\psi(y+y/\tau)-\psi(y)$.

Putting together \eqref{fpart} and \eqref{errors}, using \eqref{Expansion1} and \eqref{fgSplit}, and setting $A(y)=A_1(y)+A_2(y)$ we get
\begin{align}
\psi(y+y/\tau)-\psi(y)&=\frac{A(y)}{\tau}+E_1+E_2+E_3+E_4+E_5.
\end{align}
\begin{lmm}\label{MainEstimate}
For $T_0=\tau(\log{y})^3$, $T_1=y^{1/8}$ and $x^{1/3}\le \tau \le x^{3/4}$ we have
\begin{align*}
(i)\qquad&E_1, E_3, E_5= o\left(\frac{y}{\tau}\right)
(ii)\qquad&A(y)\sim y\text{ for }y\notin\mathcal{B}_{y^{1/3}}.
\end{align*}
\end{lmm}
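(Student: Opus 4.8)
The plan is to dispose of the error terms $E_1, E_3, E_5$ by direct estimation, and then to show that the main term $A(y)=A_1(y)+A_2(y)$ approximates $y$ by recognising that, as $T_1\to\infty$, $A_1(y)+A_2(y)$ would reconstruct $\psi(y+y/\tau)-\psi(y)$ divided by the scale $1/\tau$, which by the prime number theorem is asymptotically $y$; the work is in controlling how far we are from $T_1=\infty$.

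First I would handle $E_1$: by \eqref{fpart} we have $E_1 = O(x^{1/19})$, and since $\tau\le x^{3/4}$ and $y\asymp x$ we get $y/\tau \ge x^{1/4}$, so $E_1 = O(x^{1/19}) = o(x^{1/4}) = o(y/\tau)$ trivially. Next, $E_3$: from \eqref{E1}, $E_3 = O(y\log^2 y / T_0 + \log y)$, and with $T_0 = \tau(\log y)^3$ this is $O(y/(\tau\log y) + \log y) = o(y/\tau)$, again immediate. For $E_5$, I would use the bound \eqref{C2Bound}, namely $C_2(s)\ll |s|/\tau^2$, together with a mean-value estimate for the Dirichlet polynomial $\sum S(s)$ on the segment $[c-iT_1, c+iT_1]$: crudely, $|\sum S(s)| \ll y^\epsilon$ times a product of polynomial lengths, and integrating $|s|/\tau^2$ against $y^c = ey$ over $|t|\le T_1 = y^{1/8}$ gives $O(y^{1+\epsilon} T_1^2/\tau^2) = O(y^{5/4+\epsilon}/\tau^2)$. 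Since $\tau\ge x^{19/40} > x^{1/4}$, this is $o(y/\tau)$ — here one should be slightly careful and use the trivial pointwise bound $\sum S(s)\ll y^\epsilon$ on $\mathrm{Re}(s)=c$, which holds since each $S_i(c+it)$ is bounded by its length raised to $-c<0$ summed, i.e. by $O(\log x)$, and there are $2k$ factors giving $O((\log x)^{2k}) = O(x^\epsilon)$.

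For part (ii), the strategy is to run the Perron argument of \eqref{errors} in reverse with $T_1$ replaced by a much larger parameter, or simply to compare $A(y)$ to $\psi(y+y/\tau)-\psi(y)$ for the specific small value $\tau = y^{1/3}$. Since both $A_1(y)$ and $A_2(y)$ are independent of $\tau$ (by the remark after $A_1$ and by \eqref{A2Indep}), it suffices to evaluate $A(y)$ using any convenient choice of $\tau$; taking $\tau = y^{1/3}$ and invoking the already-established bounds on $E_1, E_3, E_5$ together with the hypothesis $y\notin \mathcal{B}_{y^{1/3}}$ (which forces $E_2 = 0$) gives
\[
\psi(y+y^{2/3}) - \psi(y) = \frac{A(y)}{y^{1/3}} + E_4 + o(y^{2/3}).
\]
The prime number theorem (indeed Huxley's or even weaker zero-density results, certainly the Baker--Harman--Pintz input already cited) gives $\psi(y+y^{2/3})-\psi(y) \sim y^{2/3}$, so $A(y)/y^{1/3} \sim y^{2/3} + O(|E_4|)$, hence $A(y) \sim y$ provided $E_4 = o(y^{2/3})$ at $\tau = y^{1/3}$.

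The main obstacle is therefore controlling $E_4$ at the single scale $\tau = y^{1/3}$: from \eqref{E2} and \eqref{C1Bound}, $E_4 \ll \tau^{-1}\int_{T_1}^{T_0}|\sum S(c+it)|\,dt$ with $T_1 = y^{1/8}$ and $T_0 = \tau(\log y)^3$, and we cannot afford the trivial bound on this tail — we genuinely need the mean-value / large-values machinery for Dirichlet polynomials (the $L^2$, $L^4$, $L^\infty$ estimates promised in Section on the Main Result, combined with the length restriction $N_i\le x^{19/20}$ and $N_i\le(3x)^{1/k}$ for $i\le k$) to show $\int_{y^{1/8}}^{y^{1/3+o(1)}}|\sum S(c+it)|\,dt = o(y^{2/3}\cdot y^{1/3}) = o(y)$. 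I expect this is exactly the content deferred to the estimates on $\psi$ in intervals mentioned just before the Theorem, and that for \emph{this} lemma one only needs the comparatively soft bound at $\tau=y^{1/3}$ rather than the full uniform range; so the proof of part (ii) should cite the forthcoming Dirichlet polynomial bounds applied with these parameters, reducing everything to a clean mean-value estimate, while part (i) is entirely elementary as sketched above.
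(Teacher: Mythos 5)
Your treatment of part (i) matches the paper exactly: the bounds on $E_1$ and $E_3$ are immediate, and for $E_5$ the pointwise estimate $|\sum S(c+it)| \ll y^{\epsilon}$ (coming from $|S_i(c+it)|\ll\log y$ and the $O((\log x)^{2k})$ terms in $\sum$) combined with $C_2(s)\ll|s|\tau^{-2}$ over $|t|\le T_1 = y^{1/8}$ gives $E_5\ll y^{1+2\epsilon}T_1^2/\tau^2 = y^{11/12+2\epsilon}/\tau = o(y/\tau)$ using $\tau\ge x^{1/3}$.

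For part (ii), you have the right high-level plan — exploit the $\tau$-independence of $A(y)$, specialize to $\tau = y^{1/3}$, invoke a short-interval asymptotic $\psi(y+y^{2/3})-\psi(y)\sim y^{2/3}$ (Huxley's theorem with exponent $7/12$; note that the bare PNT and the Baker--Harman--Pintz existence result do \emph{not} give an asymptotic, so among the references you name only Huxley is actually adequate), use $y\notin\mathcal{B}_{y^{1/3}}$ to kill $E_2$, and conclude once $E_4 = o(y^{2/3})$. But you leave the one genuinely substantive step — bounding $E_4$ at $\tau=y^{1/3}$ — to ``forthcoming Dirichlet polynomial bounds'', and this is where the proposal has a real gap. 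The paper's $L^2$/$L^4$/$L^\infty$ machinery (Lemma \ref{BasicLemma} onward) sits logically \emph{downstream} of Lemma \ref{MainEstimate}: equation \eqref{setup}, which that machinery feeds into, is derived using this very lemma, so appealing to it here would be circular. The input the paper actually uses is \emph{external}: Heath-Brown's mean-value estimate (\cite{HBHuxley}, Lemma 3), adapted to $\Re(s)=c$, giving $\int_T^{2T}|S_1(c+it)\cdots S_{2k}(c+it)|\,dt \ll x^{1-c}(\log x)^{-A}$ for any $A$, uniformly in the range $\exp((\log x)^{1/3})\le T\le x^{5/12-\epsilon}$. The decisive observation you are missing is that this range condition is exactly what singles out the scale $\tau = y^{1/3}$: then $T_0=\tau(\log y)^3\le x^{5/12-\epsilon}$, so the Heath-Brown bound applies uniformly on $[T_1,T_0]$, and the tail integral is $\ll y(\log y)^{-2k-1}/\tau$ times the $O((\log y)^{2k})$ terms of $\sum$, giving $E_4 = o(y/\tau)$. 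Without identifying that external estimate and the constraint $T_0\le x^{5/12-\epsilon}$ that makes it usable, the argument for (ii) does not close.
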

\begin{proof}
(i): Estimate of $E_1$, $E_3$, $E_5$.

Since $\tau\le x^{3/4}$ and $x\le y$, we have
\begin{equation}E_1\ll x^{1/19}=o\left(\frac{y}{\tau}\right).\end{equation}
Since $\tau\le y^{3/4}$ and $T_0=\tau(\log{y})^3$, we have
\begin{equation}E_3=O\left(\frac{y(\log{y})^2}{T_0}\right)=O\left(\frac{y(\log{y})^2}{\tau(\log{y})^3}+\log{y}\right)=o\left(\frac{y}{\tau}\right).\end{equation}
We have
\begin{equation}|S_i(c+it)|\le \sum_{N_i<n_i\le 2N_i}(\log{n_i})n_i^{-c}\ll \log{y}\end{equation}
for all $i$. Since $S$ is a product of the $S_i$ we have
\begin{equation}\left|S(c+it)\right|\ll y^{\epsilon}.\end{equation}
Thus, since $T_1=y^{1/8}$ and $\tau\ge y^{1/3}$ and $C_2(s)\ll |s|\tau^{-2}$ (by \eqref{C2Bound}) and $\sum$ is a sum over $\ll y^\epsilon$ terms, we have that
\begin{align}
E_5&\ll \int_{c-iT_1}^{c+iT_1}y^c\left|C_2(s)\right|\left|\sum S(s)\right||ds|\nonumber\\
&\ll \frac{y^{c+2\epsilon}T_1^2}{\tau^2}\nonumber\\
&\ll \frac{y^{11/12+2\epsilon}}{\tau}\nonumber\\
&=o\left(\frac{y}{\tau}\right).
\end{align}
(ii): Estimate of $A(y)$.

The idea for estimating $A(y)$ is as follows. For $\tau=\tau_0$ with $\tau_0$ `small' we have that $\psi(y+y/\tau)-\psi(y)\sim y/\tau$. We have that $E_1,E_3,E_5$ are all small relative to this, and that $E_2$ is zero outside $\mathcal{B}_\tau$. Therefore, provided we can show $E_4$ is small for this value of $\tau$, we can bound $A(y)$ from below when $y$ is not in $\mathcal{B}_{\tau_0}$ (which covers almost all values of $y$). Since $A(y)$ is independent of $\tau$, this bound holds for any size of $\tau$, giving the result. We proceed to make this precise.

Huxley's Theorem \cite{Huxley} states that
\begin{equation}\psi(a+b)-\psi(a)\sim b,\end{equation}
for $b>a^{7/12+\epsilon}$.

Using Huxley's Theorem taking $a=y$, $b=y/\tau$ and $\tau= y^{1/3}$ we obtain
\begin{equation}\psi(y+y/\tau)-\psi(y)\sim y\tau^{-1}.\end{equation}
For this value of $\tau$ we still have
\begin{equation}E_1,E_3,E_5=o(y\tau^{-1}).\end{equation}
By Heath-Brown \cite{HBHuxley}[Lemma 3] we have
\begin{equation}\int_T^{2T}|S_1(1/2+it)\dots S_{2k}(1/2+it)|dt\ll x^{1/2}(\log{x})^{-12}\end{equation}
uniformly for $\exp((\log{x})^{1/3})\le T\le x^{5/12-\epsilon}$.

A precisely analogous argument yields
\begin{equation}\label{E4Small}
\int_T^{2T}|S_1(c+it)\dots S_{2k}(c+it)|dt\ll x^{1-c}(\log{x})^{-A}
\end{equation}
for any constant $A>0$ and uniformly for $\exp((\log{x})^{1/3})\le T\le x^{5/12-\epsilon}$. We choose $A=2k+2$ ($=122$) since this will be sufficient for our purposes.

When $\tau=y^{1/3}$ we have $\exp((\log{x})^{1/3})\le y^{1/8}=T_1$ and $T_0=\tau(\log{y})^3\le x^{5/12-\epsilon}$. We can therefore use \eqref{E4Small} uniformly for $T\in[T_1,T_0]$.

Thus, since $|C_1(s)|\ll \tau^{-1}$ (by \eqref{C1Bound}), we have
\begin{align}
E_4&\ll\left| \int_{c+iT_1}^{c+iT_0}y^{s}C_1(s)\sum S(s)ds\right|\nonumber\\
&\ll \frac{y^{c}}{\tau}\sum\int_{c+iT_1}^{c+iT_0}\left|S(s)\right|ds\nonumber\\
&\ll \frac{y^{c}(\log{y})}{\tau}\sum\sup_{T\in[T_0,T_1]}\int_T^{2T}\left|S_1(c+it)\dots S_{2k}(c+it)\right|dt\nonumber\\
&\ll \frac{y}{(\log{y})^{2k+1}\tau}\sum 1.
\end{align}
Since the sum is over $O((\log{y})^{2k})$ terms, this gives
\begin{equation}E_4=o\left(\frac{y}{\tau}\right).\end{equation}
Thus for $\tau=y^{1/3}$ we have
\begin{equation}E_1, E_3, E_4,E_5=o(y\tau^{-1}),\qquad\psi(y+y\tau^{-1})-\psi(y)\sim y\tau^{-1}.\end{equation}
Moreover, $E_2=0$ for $y\notin \mathcal{B}_{y^{1/3}}$ when $\tau=y^{1/3}$.

Hence for $\tau=y^{1/3}$ and $y\notin \mathcal{B}_{y^{1/3}}$
\begin{equation}A(y)\sim y.\end{equation}
Since $A(y)$ is independent of $\tau$, this must hold for all values of $\tau$.
\end{proof}
Thus
\begin{equation}\psi(y+y/\tau)-\psi(y)-\frac{y}{\tau}=E_2+E_4+\textbf{1}_{\mathcal{B}_{y^{1/3}}}(y)O\left(\frac{A(y)}{\tau}\right)+o\left(\frac{y}{\tau}\right).\end{equation}
We let $E_6=E_2+\textbf{1}_{\mathcal{B}_{y^{1/3}}}(y)A(y)/\tau$. Since $\mathcal{B}_{y^{1/3}}\supset\mathcal{B}_\tau$ for $\tau\ge y^{1/3}$ we see that $E_6=0$ if $y\notin\mathcal{B}_{y^{1/3}}$. Therefore
\begin{equation}
\psi(y+y/\tau)-\psi(y)-\frac{y}{\tau}=E_4+E_6+o\left(\frac{y}{\tau}\right),\label{KeyPoint1}
\end{equation}
where $E_6=0$ if $y\notin \mathcal{B}_{y^{1/3}}$.

By definition of $\psi$, we also have
\begin{equation}\psi(y+y/\tau)-\psi(y)=\sum_{\substack{k,p \text{ prime}\\y\le p^k\le y+y/\tau}}\log{p}.\end{equation}
The key point is that if there are no primes in the interval $[y,y+y/\tau]$ then there are no terms with $k=1$. Hence
\begin{align}
\psi(y+y/\tau)-\psi(y)&\le \sum_{2\le k\le \log{y}}\sum_{y^{1/k}\le p\le y^{1/k}+(y/\tau)^{1/k}}\log{y}\nonumber\\
&\ll(\log{y})^2(y/\tau)^{1/2}\nonumber\\
&=o\left(\frac{y}{\tau}\right).\label{NoZeroes}
\end{align}
Thus if there are no primes in the interval $[y,y+y/\tau]$ then the left hand side of \eqref{KeyPoint1} is $\gg y\tau^{-1}$. This means that $E_4+E_6\gg y\tau^{-1}$. The term $E_6$ is only non-zero on $\mathcal{B}_{y^{1/3}}$, which is a small set, and so cannot be large frequently. Moreover, $E_4$ can only be large when $\sum S(c+it)$ is large, and we can show this does not happen too often by estimates on the frequency with which Dirichlet Polynomials can take large values. Thus we can show that the interval $[y,y+y/\tau]$ rarely contains no primes.

We split the sum $\sum S_1\dots S_{2k}$ up into subsums dependent on the size of each of the $S_i$, to show that $E_4$ cannot be large often.

We put
\begin{equation}
\mathcal{S}=\mathcal{S}(\sigma_1,\dots,\sigma_{2k}):=\left\{m\in\mathbb{Z}:N_i^{-c+\sigma_i} \le \sup_{t\in[m,m+1]}|S_i|\le 2N_i^{-c+\sigma_i}\forall i\right\}
\end{equation}
for each $\sigma_i\in\left\{1, 1-\frac{\log{2}}{\log{N_i}},1-\frac{2\log{2}}{\log{N_i}}, \dots,-\frac{\log{x}}{\log{N_i}}\right\}$. We let $\mathcal{S}_0$ cover the remaining values of $m$, so $\mathcal{S}_0=\{m\in\mathbb{Z}:\sup_{t\in[m,m+1]}|S_i|\le N_i^{-c}x^{-1}$ for some $i\}$.

We let $\sum_{(\sigma_i)}$ represent the sum over all the $O((\log{x})^{2k})$ values of $(\sigma_i)_1^{2k}$.

Hence splitting $E_4$ into terms corresponding to the choices of $(\sigma_i)$ we get
\begin{align}
E_4&\ll \left|\int_{c+iT_1}^{c+iT_0}y^sC_1(s)\left(\sum S(s)\right)ds\right|\nonumber\\
&\ll \sum_{(\sigma_i)}\sum\left|\sum_{m\in \mathcal{S}\cap[T_0,T_1]}\int_{c+im}^{c+i(m+1)}y^sC_1(s) S(s)ds\right|\nonumber\\
&\qquad+\sum\sum_{m\in\mathcal{S}_0\cap[T_1,T_0]}\int_{c+im}^{c+i(m+1)}\left|y^sC_1(s) S(s)\right||ds|.
\end{align}
The sum $\sum$ is over $O((\log{x})^{2k})$ terms, $|C_1(s)|\ll \tau^{-1}$ (by \eqref{C1Bound}) and for $m\in \mathcal{S}_0$ we have $|S(s)|\le x^{-1}$. Therefore the last term is
\begin{equation}\ll (\log{x})^{2k}T_0x\tau^{-1}x^{-1}=o\left(\frac{x}{\tau}\right).\end{equation}
We split the range of integration of the first term into $O(\log(1+T_0/T_1))$ dyadic intervals. This gives
\begin{align}
E_4&\ll \log(1+T_0/T_1)\sup_{T\in[T_1,T_0]}\sum_{(\sigma_i)}\sum\left|\sum_{m\in \mathcal{S}\cap[T,2T]}\int_{c+im}^{c+i(m+1)}y^sC_1(s) S(s)ds\right|\nonumber\\
&\qquad\qquad+\sum_{(\sigma_i)}\sum\int_{T_1/2}^{T_1}y^c\tau^{-1}|S(c+it)|dt+o\left(\frac{x}{\tau}\right).\label{E4Split}
\end{align}
We put
\begin{equation}E((N_i),(\sigma_i);y,T)=\left|\sum_{m\in\mathcal{S}\cap[T,2T]}\int_{m}^{m+1}y^{c+it}C_1(c+it)S(c+it)dt\right|.\end{equation}
We note that the sum $\sum_{(\sigma_i)}\sum$ is a sum over $O((\log{x})^{4k})$ terms. Thus the first term on the right hand side of \eqref{E4Split} is
\begin{equation}\ll (\log{x})^{4k+1}\sup_{(N_i),(\sigma_i),T\in[T_1,T_0]}E((N_i),(\sigma_i);y,T)\end{equation}
where $(\sigma_i)_1^{2k}$ and $(N_i)_1^{2k}$ are constrained by
\begin{align}
(i)&: \sigma_i\le 1\quad \forall i,\label{C1}\\
(ii)&: x\ll \prod_{i=1}^{2k} N_i\ll x,\label{C2}\\
(iii)&: N_i\le (3x)^{1/k}\text{ if }i\le k,\label{C3}\\
(iv)&: N_i\le x^{19/20}\quad \forall i. \label{C4}
\end{align}
Since $T_1=y^{1/8}$ we can use \eqref{E4Small} with $A=4k+1$ to bound the second term on the right hand side of \eqref{E4Split}. This gives
\begin{align}
\sum_{(\sigma_i)}\sum\int_{T_1/2}^{T_1}y^c\tau^{-1}|S(c+it)|dt&\ll (\log{x})^{4k}y^c\tau^{-1}y^{1-c}(\log{x})^{-4k-1}\nonumber\\
&=o\left(\frac{y}{\tau}\right).
\end{align}
Since $\tau\le x^{3/4}$ we can bound the third term on the right hand side of \eqref{E4Split} trivially.
\begin{align}
(\log{x})^{2k}T_0x\tau^{-1}x^{-1}&\ll (\log{x})^{2k+3}\nonumber\\
&=o\left(\frac{y}{\tau}\right).
\end{align}
Putting this together, we obtain:
\begin{align}
&\left|\psi\left(y+y/\tau\right)-\psi(y)-\frac{y}{\tau}\right|\nonumber\\
&\qquad\qquad\ll (\log{x})^{4k+1}\sup_{(N_i),(\sigma_i),T\in[T_1,T_0]}E((N_i),(\sigma_i);y,T)+E_6+o\left(\frac{x}{\tau}\right).\label{setup}\end{align}
We now want to show that there cannot be many large gaps between primes by showing that the $L^2$, $L^4$ and $L^\infty$ norms of $E((N_i),(\sigma_i);y,T)$ cannot all be simultaneously large.
\newpage
\subsection{The Basic Lemma}
\begin{lmm}\label{BasicLemma}
We have
\begin{equation}\left|\psi(y+y/\tau)-\psi(y)-y/\tau\right|\ll (\log{x})^{4k+1}\sup_{(N_i),(\sigma_i),T\in[T_1,T_0]}E((N_i),(\sigma_i);y,T)+E_6+o\left(\frac{y}{\tau}\right)\end{equation}
where the supremum is constrained by \eqref{C1},\eqref{C2},\eqref{C3},\eqref{C4} and $E((N_i),(\sigma_i);y,T)$ satisfies, for any $\epsilon>0$:
\begin{align}
E((N_i),(\sigma_i);y,T)&\ll x_1^{\sigma}\tau^{-1}R(T),\label{Einfty}\\
\int_x^{2x}\left|E((N_i),(\sigma_i);y,T)\right|^2dy &\ll x_1^{1+2\sigma+\epsilon}\tau^{-2}R(T),\label{EL2}\\
\int_x^{2x}\left|E((N_i),(\sigma_i);y,T)\right|^4dy&\ll x_1^{1+4\sigma+\epsilon}\tau^{-4}R^*(T).\label{EL4}
\end{align}
Here
\begin{align}
\mathcal{S}&=\mathcal{S}(\sigma_1,\dots,\sigma_{2k})=\left\{m:N^{-c+\sigma_i} \le \sup_{t\in[m,m+1]}|S_i|\le 2N^{-c+\sigma_i}\forall i\right\},\\
\mathcal{S^*}&=\mathcal{S^*}(\sigma_1,\dots,\sigma_{2k})=\left\{(m_1,m_2,m_3,m_4)\in\mathcal{S}^4:m_1+m_2=m_3+m_4\right\},\\
R(T)&=R(T,\sigma_1,\dots,\sigma_{2k})=\#\left(\mathcal{S}\cap[T,2T]\right),\\
R^*(T)&=R^*(T,\sigma_1,\dots,\sigma_{2k})=\#\left(\mathcal{S^*}\cap[T,2T]^4\right),\\
x_1&=\prod_{i=1}^{2k}N_i,\\
x_1^\sigma&=\prod_iN_i^{\sigma_i}.
\end{align}
\end{lmm}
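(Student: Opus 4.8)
The plan is to establish the three displayed bounds \eqref{Einfty}, \eqref{EL2}, \eqref{EL4} for $E((N_i),(\sigma_i);y,T)$ one at a time; the bound \eqref{setup} is already in hand from the previous subsection, so only the norm estimates require work. Throughout, recall that on $\mathcal{S}$ we have $\sup_{t\in[m,m+1]}|S_i(c+it)|\le 2N_i^{-c+\sigma_i}$ for every $i$, so that $\sup_{t\in[m,m+1]}|S(c+it)|\ll \prod_i N_i^{-c+\sigma_i} = x_1^{-c}\,x_1^{\sigma}$ (up to a constant $2^{2k}$), and that $|C_1(c+it)|\ll \tau^{-1}$ by \eqref{C1Bound}. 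Also $y^{c}\ll x_1^{c+\epsilon}$ on the relevant range since $y\asymp x_1$ (the product $\prod N_i$ is of size $x$ by \eqref{C2}, and we only integrate $y\in[x,2x]$), so on any unit interval $[m,m+1]$,
\[
\left|\int_m^{m+1} y^{c+it}C_1(c+it)S(c+it)\,dt\right| \ll x_1^{c+\epsilon}\cdot \tau^{-1}\cdot x_1^{-c}x_1^{\sigma} = x_1^{\sigma+\epsilon}\tau^{-1}.
\]
Summing over the $R(T)$ values of $m$ in $\mathcal{S}\cap[T,2T]$ gives \eqref{Einfty} immediately (absorbing $x_1^\epsilon$ is harmless).

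For the $L^2$ bound \eqref{EL2}, I would expand the square, writing $E$ as a sum over $m\in\mathcal{S}\cap[T,2T]$ of the unit-interval integrals $I_m(y)=\int_m^{m+1} y^{c+it}C_1(c+it)S(c+it)\,dt$, so that $\int_x^{2x}|E|^2\,dy = \sum_{m,m'}\int_x^{2x} I_m(y)\overline{I_{m'}(y)}\,dy$. The key is a mean-value estimate: because $I_m(y)I_{m'}(y)$ carries an oscillatory factor $y^{i(t-t')}$ with $t\in[m,m+1]$, $t'\in[m',m'+1]$, the $y$-integral over $[x,2x]$ gains a factor $\ll \min(x, 1/|m-m'|)$ after integrating by parts in $y$ (using $\int_x^{2x}y^{c+i(t-t')}\,dy \ll x^{1+c}\min(1,1/|t-t'|)$ and $|t-t'|\asymp|m-m'|$ when $|m-m'|\ge 2$). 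Combined with the pointwise bound $|I_m(y)|\ll x_1^{\sigma+\epsilon}\tau^{-1}$ this yields $\int_x^{2x}|E|^2\,dy \ll x_1^{2\sigma+\epsilon}\tau^{-2}\sum_{m,m'\in\mathcal{S}\cap[T,2T]} x\min(1,1/|m-m'|) \ll x_1^{1+2\sigma+\epsilon}\tau^{-2}\cdot\#(\mathcal{S}\cap[T,2T])$, which is \eqref{EL2} (the double sum over $m,m'$ with the $\min$-weight collapses to $O(R(T)\log x)$, absorbed into $x_1^\epsilon$). One has to be slightly careful about the diagonal and near-diagonal terms $|m-m'|\le 1$, which contribute $\ll x\cdot R(T)\cdot x_1^{2\sigma+\epsilon}\tau^{-2}$ directly.

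For the $L^4$ bound \eqref{EL4} I would repeat the expansion, now getting a quadruple sum over $(m_1,m_2,m_3,m_4)\in(\mathcal{S}\cap[T,2T])^4$ of integrals $\int_x^{2x} I_{m_1}I_{m_2}\overline{I_{m_3}}\,\overline{I_{m_4}}\,dy$, carrying the oscillation $y^{i(t_1+t_2-t_3-t_4)}$. The same integration-by-parts in $y$ shows this integral is $\ll x_1^{4\sigma+\epsilon}\tau^{-4}\cdot x\min(1,1/|t_1+t_2-t_3-t_4|)$, so only quadruples with $m_1+m_2-m_3-m_4 = O(1)$ contribute the full $x$; rounding, these are essentially the quadruples with $m_1+m_2=m_3+m_4$, i.e. $(m_1,m_2,m_3,m_4)\in\mathcal{S}^*$, of which there are $R^*(T)$ in $[T,2T]^4$. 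Hence $\int_x^{2x}|E|^4\,dy \ll x_1^{1+4\sigma+\epsilon}\tau^{-4}R^*(T)$, which is \eqref{EL4}. I expect the main obstacle to be the bookkeeping in this last step: one must confirm that the off-diagonal contributions (where $m_1+m_2-m_3-m_4\ne 0$ but is still small, or where it is large and the $\min$ saves a power) really do sum to something of size at most $x_1^\epsilon R^*(T)$ rather than overwhelming it — this is a divisor-type count on how many quadruples have $m_1+m_2-m_3-m_4 = h$ for each small $h$, which is $\ll R(T)^2$ uniformly in $h$, and the tail sum $\sum_{h\ne 0} R(T)^2/|h|$ over $|h|\le 4T$ contributes $\ll R(T)^2\log T \ll R^*(T)\log T$ since trivially $R^*(T)\gg R(T)^2$ (take $m_1=m_3$, $m_2=m_4$) — but one should double check that this crude lower bound on $R^*$ suffices, and if not, localize more carefully using the structure of $\mathcal{S}$.
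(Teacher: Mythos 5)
Your approach tracks the paper's proof closely for all three norm estimates: pull absolute values inside the unit-interval integral for the $L^\infty$ bound, expand the square (resp. fourth power), integrate the $y$-oscillation to produce a kernel $\ll 1/(1+|t_1-t_2|)$ (resp. $1/(1+|t_1+t_2-t_3-t_4|)$), and then sum. The $L^\infty$ and $L^2$ parts are essentially right (modulo the harmless typo $y^{c+i(t-t')}$ where the integrand is really $y^{2c+i(t-t')}$).

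There is, however, a genuine error in your $L^4$ closing step, precisely where you flagged uncertainty. You assert that the count
\[
g(h) = \#\{(m_1,m_2,m_3,m_4)\in(\mathcal{S}\cap[T,2T])^4 : m_1+m_2-m_3-m_4 = h\}
\]
is $\ll R(T)^2$ uniformly in $h$. This is false: fixing $m_1,m_2,m_3$ determines $m_4$, so the trivial bound is only $g(h)\ll R(T)^3$, and this is sharp — if $\mathcal{S}\cap[T,2T]$ is a block of $R$ consecutive integers then $g(0)\asymp R^3$. With $g(h)\ll R(T)^3$ your tail sum becomes $\ll R(T)^3\log T$, which does \emph{not} reduce to $R^*(T)\log T$ via $R^*(T)\gg R(T)^2$; that lower bound is too crude. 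The correct and needed inequality is $g(h)\le g(0)=R^*(T)$ for every $h$, and this requires an idea you omitted: writing
\[
g(h)=\int_0^1\Bigl|\sum_{m\in\mathcal{S}\cap[T,2T]}e(mu)\Bigr|^4 e(-hu)\,du
\le \int_0^1\Bigl|\sum_{m\in\mathcal{S}\cap[T,2T]}e(mu)\Bigr|^4 du = g(0),
\]
using the positivity of $|\cdot|^4$. Once you insert this positivity step, your computation closes exactly as in the paper: $\sum_{h}g(h)/(1+|h|)\le g(0)\sum_{|h|\le 4T}1/(1+|h|)\ll R^*(T)\log T$, and the $\log T$ is absorbed into $x_1^\epsilon$.
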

\begin{proof}
The Proof follows exactly the same lines as that of Heath-Brown in \cite{HBI} and \cite{HBII} but using Dirichlet polynomials instead of zeroes of $\zeta(s)$.

We note that
\begin{equation}x\ll x_1=\prod_{i=1}^{2k}N_i\ll x,\qquad x\ll y\ll x.\end{equation}
We will find it slightly more convenient to work with $x_1$ rather than $x$ in our later arguments, and so we introduce it now.

We recall that $|C_1(s)|\ll \tau^{-1}$ (by \eqref{C1Bound}) and that $S(c+it)\ll \prod_{i=1}^{2k} N_i^{c-\sigma_i}=x_1^{\sigma-c}$ for $t\in[m,m+1]$ and $m\in \mathcal{S}$.

(i): $L^\infty$ estimate.
\begin{align}
E((N_i),(\sigma_i);y,T)&=\left|\sum_{m\in\mathcal{S}\cap[T,2T]}\int_{m}^{m+1}y^{c+it}C_1(c+it)S(c+it)dt\right|\nonumber\\
&\ll\sum_{m\in\mathcal{S}\cap[T,2T]}\int_{m}^{m+1}y^c\tau^{-1}x_1^{-c+\sigma}dt\nonumber\\
&\ll x_1^\sigma\tau^{-1}\sum_{m\in\mathcal{S}\cap[T,2T]}1\nonumber\\
&\ll x_1^{\sigma}\tau^{-1}R(T).
\end{align}
(ii): $L^2$ estimate.
\begin{align}
\int_x^{2x}|E&(N_i),(\sigma_i);y,T|^2dy\nonumber\\
&=\sum_{m_1,m_2\in\mathcal{S}\cap[T,2T]}\int_{c+im_1}^{c+i(m_1+1)}\int_{c+im_2}^{c+i(m_2+1)}\nonumber\\
&\qquad\qquad\qquad\qquad\qquad\left(\int_x^{2x}y^{s_1+\overline{s}_2}dy\right)C_1(s_1)S(s_1)\overline{C_1(s_2)}\overline{S(s_2)}ds_1ds_2\nonumber\\
&\ll x_1^{3}\sum_{m_1,m_2\in\mathcal{S}\cap[T,2T]}\int_{c+im_1}^{c+i(m_1+1)}\int_{c+im_2}^{c+i(m_2+1)}\frac{|C_1(s_1)S(s_1)C_1(s_2)S(s_2)|}{|1+s_1+\overline{s}_2|}ds_1ds_2\nonumber\\
&\ll x_1^{2\sigma+1}\tau^{-2}\sum_{m_1,m_2\in\mathcal{S}\cap[T,2T]}\int_{m_1}^{m_1+1}\int_{m_2}^{m_2+1}\frac{1}{1+|t_1-t_2|}dt_1dt_2\nonumber\\
&\ll \frac{x_1^{2\sigma+1}\log{x_1}}{\tau^2}\sum_{m_1\in\mathcal{S}\cap[T,2T]}1\nonumber\\
&\ll x_1^{1+2\sigma+\epsilon}\tau^{-2}R(T).
\end{align}
(iii): $L^4$ estimate.
\begin{align}
\int_x^{2x}|E((N_i)&,(\sigma_i);y,T)|^4dy\nonumber\\
&\ll \int_x^{2x}\left|\sum_{m\in \mathcal{S}\cap[T,2T]}\int_m^{m+1}y^{c+it}C_1(c+it)S(c+it)dt\right|^4dy\nonumber\\
&\ll \sum_{m_1,m_2,m_3,m_4\in \mathcal{S}\cap[T,2T]}\int_{m_1}^{m_1+1}\int_{m_2}^{m_2+1}\int_{m_3}^{m_3+1}\int_{m_4}^{m_4+1}\nonumber\\
&\qquad\qquad \left|\int_x^{2x}y^{4c+i(t_1+t_2-t_3-t_4)}dy\right|\prod_{j=1}^{4}\left(\left|C_1(c+it_j)S(c+it_j)\right|dt_j\right)\nonumber\\
&\ll x_1^{5}\left(\tau^{-4}\left(\prod_iN_i^{-4c+4\sigma_i}\right)\sum_{m_j\in\mathcal{S}\cap[T,2T]}\frac{1}{1+|m_1+m_2-m_3-m_4|}\right)\nonumber\\
&\ll \frac{x_1^{1+4\sigma}}{\tau^4}\sum_{m_j\in\mathcal{S}\cap[T,2T]}\frac{1}{1+|m_1+m_2-m_3-m_4|}.
\end{align}
We wish to bound the inner sum. We let
\begin{equation}g(v):=\#\{(m_1,m_2,m_3,m_4)\in(\mathcal{S}\cap[T,2T])^4:m_1+m_2-m_3-m_4=v\},\end{equation}
\begin{equation}\mathcal{S}^*=\{(m_1,m_2m_3,m_4)\in\mathcal{S}^4:m_1+m_2-m_3-m_4=0\}.\end{equation}
Then
\begin{equation}\sum\frac{1}{1+|m_1+m_2-m_3-m_4|}\ll \sum_{|v|\le 4T}\frac{g(v)}{1+|v|}.\end{equation}
But we have
\begin{align}
g(v)&=\int_0^1\left|\sum_{m\in \mathcal{S}\cap[T,2T]} e(mu)\right|^4e(-vu)du\nonumber\\
&\le\int_0^1\left|\sum_{m\in \mathcal{S}\cap[T,2T]} e(mu)\right|^4du=g(0).
\end{align}
Hence
\begin{equation}\sum_{m_j\in\mathcal{S}\cap[T,2T]}\frac{1}{1+|m_1+m_2-m_3-m_4|}\ll \#\left(\mathcal{S}^*\cap[T,2T]\right)\log{T}.\end{equation}
This gives us
\begin{equation}\int_x^{2x}|E((N_i),(\sigma_i);y,T)|^4dy \ll x_1^{1+4\sigma+\epsilon}\tau^{-4}R^*(T).\end{equation}
\end{proof}
%\newpage
\subsection{Estimation of $\sum d_n^2$}
We now use Lemma \ref{BasicLemma} to estimate $\sum d_n^2$.

Suppose $p_{n+1}-p_n\ge 4x/\tau$ and $x\le p_n\le 2x$. Let
\begin{equation}
y\in(p_n,(p_{n+1}+p_n)/2)\label{ybounds}
\end{equation}
so that, for $x\le y\le 2x$, we have
\begin{equation}p_n<y<y+y/\tau\le p_{n+1}.\end{equation}
Hence there are no primes in the interval $(y,y+y/\tau)$. In this case, by \eqref{NoZeroes} we have
\begin{equation}\psi(y+y/\tau)-\psi(y)=o\left(\frac{y}{\tau}\right).\end{equation}
Thus Lemma \ref{BasicLemma} yields
\begin{equation}
\sup_{(N_i),(\sigma_i),T}E((N_i),(\sigma_i);y,T)+E_6\gg \frac{x}{\tau(\log{x})^{4k+1}}.\label{Elarge1}
\end{equation}
We now wish to show that this cannot be the case too frequently.

Since $E_6=0$ for $y\notin \mathcal{B}_{y^{1/3}}$, we see that
\begin{equation}E_6\gg \frac{x}{\tau(\log{x})^{4k+1}}\end{equation}
can only hold on a set of measure at most
\begin{equation}
\text{meas}(\mathcal{B}_{y^{1/3}})\ll x^{43/60+\epsilon}\label{E6Measure}
\end{equation}
by \eqref{Bsize} and \eqref{TauSize}.

Suppose that for some choice of $(N_i),(\sigma_i),T$ we have
\begin{equation}\label{Elarge}
E((N_i),(\sigma_i);y,T)\gg \frac{x}{\tau(\log{x})^{4k+1}}.
\end{equation}
By \eqref{Einfty} we must have
\begin{equation}
R(T)\gg x_1^{1-\sigma}(\log{x_1})^{-4k-1}.
\end{equation}
We now wish to estimate how frequently \eqref{Elarge} can occur. By \eqref{Bsize} and \eqref{EL2} we see that \eqref{Elarge} can hold on a set of measure
\begin{align}
&\ll x_1^{2\sigma-1+\epsilon}R(T).
\end{align}
Similarly from \eqref{EL4} we see that this set has measure
\begin{equation}\ll x_1^{4\sigma-3+\epsilon}R^*(T).\end{equation}
Therefore \eqref{Elarge} holds on a set of measure
\begin{equation}\ll\begin{cases}
\min\left(x_1^{2\sigma-1+\epsilon}R(T),x_1^{4\sigma-3+\epsilon}R^*(T)\right),\qquad &R(T)\gg x_1^{1-\sigma}(\log{x_1})^{-4k-1}\\
0,&\text{otherwise}.
\end{cases}\end{equation}
There are $O(x_1^\epsilon)$ choices for $(N_i)_1^{2k}$, $(\sigma_1)_1^{2k}$ and $T$. Therefore
\begin{equation}\sup_{(N_i),(\sigma_i),T}E((N_i),(\sigma_i);y,T)\gg \frac{x}{\tau(\log{x})^{4k+1}}\end{equation}
can only hold on a set of cardinality
\begin{equation}
\ll x_1^\epsilon\sup_{\substack{(N_i),(\sigma_i),T\\T\in[T_1,T_0]\\R(T)\gg x_1^{1-\sigma}(\log{x_1})^{-4k-1}}}\left(x_1^{\epsilon}\min\left(x_1^{2\sigma-1}R(T),x_1^{4\sigma-3}R^*(T)\right)\right). \label{E4Measure}
\end{equation}
Putting \eqref{E6Measure} and \eqref{E4Measure} together, we see that the set of $y$ such that $y\in (p_n,p_n/2+p_{n+1}/2)$ with $p_n{+1}-p_n\ge 4x/\tau$ and $x\le p_n\le 2x$ must have measure
\begin{equation}\ll \sup_{\substack{(N_i),(\sigma_i),T\\T\in[T_1,T_0]\\R(T)\gg x_1^{1-\sigma}(\log{x_1})^{-4k-1}}}\left(x_1^{2\epsilon}\min\left(x_1^{2\sigma-1}R(T),x_1^{4\sigma-3}R^*(T)\right)\right)+x_1^{43/60+\epsilon}.\end{equation}
However, this set trivially has measure
\begin{equation}\ge \sum_{\substack{p_{n+1}-p_n\ge 4x/\tau\\p_n \ge x\\(p_n+p_{n+1})/2\le 2x}}\frac{p_{n+1}-p_n}{2}.\end{equation}
Therefore we have
\begin{align}
\sum_{\substack{p_{n+1}-p_n\ge 4x/\tau\\p_n \ge x\\(p_n+p_{n+1})/2\le 2x}}\frac{p_{n+1}-p_n}{2}&\ll \sup_{\substack{(N_i),(\sigma_i),T\\T\in[T_1,T_0]\\R(T)\gg x_1^{1-\sigma}(\log{x_1})^{-4k-1}}}\left(x_1^{2\epsilon}\min\left(x_1^{2\sigma-1}R(T),x_1^{4\sigma-3}R^*(T)\right)\right)\nonumber\\
&\qquad\qquad\qquad\qquad\qquad+x_1^{43/60+\epsilon}.
\end{align}
There is at most one prime $p_n$ such that $p_n\le 2x < (p_n+p_{n+1})/2$. Hence
\begin{align}
\sum_{\substack{4x/\tau\le p_{n+1}-p_n\le 8x/\tau\\x \le p_n \le 2x}}(p_{n+1}-p_n)&\ll 
\sup_{\substack{(N_i),(\sigma_i),T\\T\in[T_1,T_0]\\R(T)\gg x_1^{1-\sigma}(\log{x_1})^{-4k-1}}}\left(x_1^{2\epsilon}\min\left(x_1^{2\sigma-1}R(T),x_1^{4\sigma-3}R^*(T)\right)\right)\nonumber\\
&\qquad\qquad\qquad\qquad+x_1^{43/60+\epsilon}+\frac{x_1}{\tau}.
\end{align}
Thus
\begin{align}
\sum_{\substack{4x/\tau\le p_{n+1}-p_n\le 8x/\tau\\x \le p_n \le 2x}}(p_{n+1}-p_n)^2&\ll \sup_{\substack{(N_i),(\sigma_i),T\\T\in[T_1,T_0]\\R(T)\gg x_1^{1-\sigma}(\log{x_1})^{-4k-1}}}\left(x_1^{2\epsilon}\tau^{-1}\min\left(x_1^{2\sigma}R(T),x_1^{4\sigma-2}R^*(T)\right)\right)\nonumber\\
&\qquad\qquad\qquad\qquad+\frac{x_1^{103/60+\epsilon}}{\tau}+\frac{x_1^2}{\tau^2}.
\end{align}
This is good enough to prove that
\begin{equation}\sum_{\substack{4x/\tau\le d_n\le 8x/\tau\\x\le p_n\le 2x}}d_n^2\ll x^{1+\nu+10\epsilon}\end{equation}
 if we can prove that
\begin{equation}
\sup_{\substack{(N_i),(\sigma_i),T\\T\in[T_1,T_0]\\R(T)\gg x_1^{1-\sigma}(\log{x_1})^{-4k-1}}}\left(x_1^{2\epsilon}\min\left(x_1^{2\sigma}\tau^{-1}R(T),x_1^{4\sigma-2}\tau^{-1}R^*(T)\right)\right)+x_1^{103/60+\epsilon}/\tau\ll x_1^{1+\nu+10\epsilon}.
\end{equation}
Therefore (recalling $T_0=\tau(\log{x})^3$) we have proven the following proposition.
\begin{prpstn}\label{PropMainResult}
Let $x^{19/40}\le\tau\le x^{3/4-\epsilon}$ and $\nu\ge 29/120$.

If, uniformly for all $T\in[T_1,T_0]$ and for all possible $(N_i),(\sigma_i)$ satisfying \eqref{C1}, \eqref{C2}, \eqref{C3} and \eqref{C4}, at least one of the following holds:
\begin{align}
(i):&R(T)\ll x_1^{1-\sigma}(\log{x_1})^{-4k-2},\\
(ii):&R(T)\ll T_0x_1^{1+\nu-2\sigma+8\epsilon},\\
(iii):&R^*(T)\ll T_0x_1^{3+\nu-4\sigma+8\epsilon}\label{A2},
\end{align}
then we have
\begin{equation}\sum_{\substack{4x/\tau\le d_n\le 8x/\tau\\x\le p_n\le 2x}}d_n^2\ll x^{1+\nu+10\epsilon}.\end{equation}
\end{prpstn}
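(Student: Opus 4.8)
The plan is simply to substitute the hypotheses into the estimate obtained in the discussion preceding the proposition, namely
\[
\sum_{\substack{4x/\tau\le d_n\le 8x/\tau\\ x\le p_n\le 2x}}d_n^2\ \ll\ \sup\left(x_1^{2\epsilon}\tau^{-1}\min\bigl(x_1^{2\sigma}R(T),\,x_1^{4\sigma-2}R^*(T)\bigr)\right)+\frac{x_1^{103/60+\epsilon}}{\tau}+\frac{x_1^2}{\tau^2},
\]
where the supremum ranges over all $(N_i),(\sigma_i)$ satisfying \eqref{C1}--\eqref{C4} and all $T\in[T_1,T_0]$ with $R(T)\gg x_1^{1-\sigma}(\log x_1)^{-4k-1}$, and then to check that each term on the right is $\ll x^{1+\nu+10\epsilon}$ (recalling $x\asymp x_1$).

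The two lower-order terms are handled using only the lower bound $\tau\ge x^{19/40}$. One has $x_1^{103/60+\epsilon}/\tau\ll x_1^{103/60-19/40+\epsilon}=x_1^{1+29/120+\epsilon}$, which is $\ll x^{1+\nu+\epsilon}$ exactly because $\nu\ge 29/120$ — this is where that numerical threshold enters — and $x_1^2/\tau^2\ll x_1^{2-19/20}=x_1^{1+1/20}\ll x^{1+\nu}$ since $1/20<29/120\le\nu$.

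For the supremum, fix an admissible triple $(N_i),(\sigma_i),T$ with $R(T)\gg x_1^{1-\sigma}(\log x_1)^{-4k-1}$; by hypothesis one of (i), (ii), (iii) holds. If (i) holds then $R(T)\ll x_1^{1-\sigma}(\log x_1)^{-4k-2}$, which for $x$ large contradicts $R(T)\gg x_1^{1-\sigma}(\log x_1)^{-4k-1}$ — the single extra power of $\log x_1$ between the two thresholds is precisely what makes this case vacuous — so such a triple contributes nothing to the supremum. If (ii) holds, then using $x_1^{2\sigma}R(T)\ll x_1^{2\sigma}T_0x_1^{1+\nu-2\sigma+8\epsilon}=T_0x_1^{1+\nu+8\epsilon}$ and $T_0=\tau(\log x)^3$ gives $x_1^{2\epsilon}\tau^{-1}\min(\cdots)\le x_1^{2\epsilon}\tau^{-1}x_1^{2\sigma}R(T)\ll (\log x)^3 x_1^{1+\nu+10\epsilon}$. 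If (iii) holds, the identical computation applied to $x_1^{4\sigma-2}R^*(T)\ll x_1^{4\sigma-2}T_0x_1^{3+\nu-4\sigma+8\epsilon}=T_0x_1^{1+\nu+8\epsilon}$ yields the same bound. Absorbing the logarithmic factor into the exponent (replacing $\epsilon$ by a slightly larger constant, as is done throughout the paper) and taking the supremum over the $O(x_1^\epsilon)$ admissible triples, we obtain $\sup(\cdots)\ll x^{1+\nu+10\epsilon}$ in all three cases.

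Combining the two estimates gives $\sum d_n^2\ll x^{1+\nu+10\epsilon}$, as required. The proof is really just bookkeeping: all the analytic content — Perron's formula, the decomposition into $E_1,\dots,E_6$, and the $L^\infty$, $L^2$, $L^4$ bounds for $E((N_i),(\sigma_i);y,T)$ of Lemma \ref{BasicLemma} — has already been carried out, so the only things to be careful about are the exponent arithmetic (in particular that $103/60-19/40=1+29/120$ forces the stated lower bound on $\nu$) and the observation that hypothesis (i) genuinely empties out against the constraint $R(T)\gg x_1^{1-\sigma}(\log x_1)^{-4k-1}$. I do not anticipate any substantive obstacle here; the "hard part" is only keeping straight the logarithmic losses and the relation $T_0=\tau(\log x)^3$.
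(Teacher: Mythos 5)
Your proposal is correct and matches the paper's (essentially unwritten) argument: the proposition is a direct restatement of the displayed inequality preceding it, and you correctly identify that the threshold $\nu\ge 29/120$ comes from $103/60-19/40=1+29/120$, that hypothesis (i) is vacuous against the constraint $R(T)\gg x_1^{1-\sigma}(\log x_1)^{-4k-1}$, and that $T_0=\tau(\log x)^3$ converts hypotheses (ii) and (iii) into the required bound on the supremum (with the resulting $(\log x)^3$ absorbed into the $\epsilon$-exponent). One tiny redundancy: the $O(x_1^\epsilon)$ count of admissible triples is already baked into the $x_1^{2\epsilon}$ factor of the displayed estimate, so it need not be invoked again when passing to the supremum.
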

\newpage
\section{Large Values of Dirichlet Polynomials}
We recall that
\begin{equation*}
x_1=\prod_{i=1}^{2k}N_i,\quad
x_1^\sigma=\prod_{i=1}^{2k}N_i^{\sigma_i},\quad
N_i\ll x_1^{19/20}\quad\forall i,
\end{equation*}
\begin{equation}
N_i\ll x_1^{1/k}\quad\text{if }i\le k,\quad
 \sigma_i\le 1 \quad\forall i.\label{Constraints}
\end{equation}
In this section we aim to use published large value estimates to obtain bounds on $R(T)$ and $R^*(T)$. Specifically we aim to prove the following proposition.	
\begin{prpstn}\label{prop}
One of the following holds uniformly for $T\in[T_1,T_0]$ and for any $(N_i),(\sigma_i)$ satisfying \eqref{Constraints}
\begin{align*}
(i):&\quad R(T)\ll x_1^{1-\sigma}(\log{x_1})^{-4k-2},\\
(ii):&\quad R(T)\ll T_0x_1^{5/4-2\sigma+8\epsilon},\\
(iii):&\quad R^*(T)\ll T_0x_1^{13/4-4\sigma+8\epsilon}.
\end{align*}
Hence Proposition \ref{mainprop} holds by Proposition \ref{PropMainResult} (with $\nu=1/4$) and \eqref{TauSize}.
\end{prpstn}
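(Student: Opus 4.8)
The plan is to prove the three‑way alternative for $R(T)$, $R^{*}(T)$ by splitting on the size of $\sigma$: a purely trivial estimate will give alternative (iii) when $\sigma$ is small, and Huxley's large value theorem will give alternative (i) or (ii) when $\sigma$ is large; the closing sentence then follows by feeding $\nu=1/4$ into Proposition \ref{PropMainResult}. The arithmetic input I would record first is that, regarding $S(c+it)=\prod_{i=1}^{2k}S_i(c+it)=\sum_{x_1<n\ll x_1}a_n n^{-it}$ as a single Dirichlet polynomial of length $\asymp x_1$, the divisor bound gives $|a_n|\ll x_1^{\epsilon}n^{-c}$ and $\sum_n|a_n|^{2}\ll x_1^{-1}(\log x_1)^{O(1)}$ (using $x_1^{c}\asymp x_1$, since $c=1+1/\log y$ with $y\asymp x_1$), while on $\mathcal S$ one has $|S(c+it)|\asymp x_1^{\sigma-c}\asymp x_1^{\sigma-1}$.

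For $\sigma\le 7/16$ I would simply invoke $R^{*}(T)\le R(T)^{3}$ together with $R(T)\le\#\bigl(\mathbb Z\cap[T,2T]\bigr)\le 2T\le 2T_0$, so that $R^{*}(T)\le 8T_0\cdot T_0^{2}$. By \eqref{TauSize} we have $T_0=\tau(\log x)^{3}\le x^{3/4-\epsilon}(\log x)^{3}$, hence $T_0^{2}\ll x^{3/2-\epsilon}$; since $x_1\asymp x$ and $13/4-4\sigma+8\epsilon\ge 3/2$ throughout this range, this yields $R^{*}(T)\ll T_0 x_1^{13/4-4\sigma+8\epsilon}$, which is alternative (iii). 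This is the only place the hypothesis $\tau\le x^{3/4-\epsilon}$ is used, and it is what forces the final exponent $5/4$.

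For $\sigma\ge 7/16$ I would apply Huxley's large value theorem \cite{Huxley} to $S(c+it)$, in the form used by Heath-Brown in \cite{HBI}, \cite{HBII}, which handles the supremum of a product of Dirichlet polynomials over the unit intervals $[m,m+1]$ directly after reducing it to well‑spaced large values at the cost of a power of $\log x_1$. With the input above this produces a bound of the form
\[
R(T)\ \ll\ x_1^{\,1-2\sigma}(\log x_1)^{O(1)}\;+\;T\,x_1^{\,3-6\sigma}(\log x_1)^{O(1)}.
\]
The first term is $\ll x_1^{1-\sigma}(\log x_1)^{-4k-2}$, because $x_1^{\sigma}\ge x_1^{7/16}$ outweighs any fixed power of $\log x_1$; that is alternative (i). Since $T\le T_0$, the second term is $\le T_0 x_1^{3-6\sigma}(\log x_1)^{O(1)}\ll T_0 x_1^{5/4-2\sigma+8\epsilon}$, because $3-6\sigma\le 5/4-2\sigma$ exactly when $\sigma\ge 7/16$; that is alternative (ii). Hence $R(T)$ is bounded by the sum of the bounds in (i) and (ii), so one of them holds. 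The two ranges overlap around $\sigma=7/16$ and together cover every $\sigma$ admissible under \eqref{Constraints}, so one of (i), (ii), (iii) holds uniformly for $T\in[T_1,T_0]$ and for all $(N_i),(\sigma_i)$. Finally, as $1/4=30/120\ge 29/120$, Proposition \ref{PropMainResult} with $\nu=1/4$ gives $\sum_{4x/\tau\le d_n\le 8x/\tau,\ x\le p_n\le 2x}d_n^{2}\ll x^{5/4+10\epsilon}$ for $x^{19/40}\le\tau\le x^{3/4-\epsilon}$, and together with the ranges $\tau\le x^{19/40}$ and $\tau\ge x^{3/4-\epsilon}$ already disposed of in the passage leading to \eqref{TauSize}, this is Proposition \ref{mainprop}.

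The main obstacle is the large‑$\sigma$ case, for two reasons. First, one genuinely needs Huxley's theorem rather than Montgomery's mean value theorem: the latter gives only $R(T)\ll x_1^{2-2\sigma}(\log x_1)^{O(1)}$, which is too large for (i) and, when $T_0$ is as small as $x^{19/40}$, also too large for (ii), so the saving of a full factor $x_1$ in the leading term is essential. Second, one must pass legitimately from ``$\sup_{[m,m+1]}|S_i|$ lies in a prescribed dyadic band for every $i$'' to a clean large‑value statement for the product $S$; this relies on the band‑limitedness of each $|S_i(c+it)|^{2}$ (its Fourier support lies in $[-\log 2,\log 2]$, independently of $N_i$) and on splitting $\mathcal S\cap[T,2T]$ into $O(1)$ well‑spaced subsets, which is standard in this circle of ideas. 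Once these are in place the remaining exponent bookkeeping is routine, and it closes precisely because of $x^{19/40}\le\tau\le x^{3/4-\epsilon}$ and $\nu=1/4$.
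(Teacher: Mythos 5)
The step where you apply Huxley's large values estimate to the single product polynomial $S=\prod_i S_i$ is where the argument collapses, and the exponents you write down are wrong. In the normalisation you set up — $S(c+it)=\sum_{n\asymp x_1}a_n n^{-it}$, $\sum_n|a_n|^2\ll x_1^{-1+o(1)}$, threshold $V\asymp x_1^{\sigma-1}$ — Huxley's theorem (the paper's Lemma~\ref{HuxLV}) gives
\[
R(T)\ \ll\ x_1^{\,2-2\sigma}(\log x_1)^{O(1)}\;+\;T\,x_1^{\,4-6\sigma}(\log x_1)^{O(1)},
\]
not $x_1^{1-2\sigma}+Tx_1^{3-6\sigma}$ as you claim; you have dropped a factor of the polynomial length $x_1$ from both terms (equivalently, $R\ll NG V^{-2}+NTG^3V^{-6}$ with $G=\sum|a_n|^2$, not $GV^{-2}+TG^3V^{-6}$). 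With the corrected exponents neither of your deductions survives: $x_1^{2-2\sigma}\ll x_1^{1-\sigma}(\log x_1)^{-4k-2}$ would require $\sigma>1$, and $T_0x_1^{4-6\sigma}\ll T_0x_1^{5/4-2\sigma}$ requires $\sigma\ge 11/16$, not $\sigma\ge 7/16$.

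More importantly, the error is not a fixable slip: treating $S$ as one undecomposed polynomial of length $x_1$ simply cannot yield the $5/4$ exponent. At the critical point $\sigma=3/4$ the corrected Huxley bound gives $R\ll x_1^{1/2}+T_0x_1^{-1/2}$, which is not $\ll x_1^{1-\sigma}\log^{-O(1)}=x_1^{1/4}\log^{-O(1)}$, is not $\ll T_0x_1^{5/4-2\sigma}=T_0x_1^{-1/4}$ unless $T_0\gg x_1^{3/4}$ (i.e.\ $\mu\le 4/3$, the bottom of the range), and via $R^*\le R^3$ is far too weak for (iii). This is exactly the ``critical case'' the paper identifies (after Proposition~\ref{prop}): $\sigma_i=3/4$ for all $i$, $\mu\approx 9/5$, which Heath-Brown's $L^2$ method alone cannot handle. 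The paper's proof of this proposition therefore does something qualitatively different: it keeps the factorisation of $S$ into the pieces $S_i$, exploits the length constraints $N_i\le(3x)^{1/k}$ for $i\le k$ and $N_i\le x_1^{19/20}$, combines factors into two subproducts $M$, $N$ of carefully chosen lengths, and then interleaves Montgomery, Huxley, Heath-Brown's $R^*$ inequality (Lemma~\ref{HBR*}), and the dedicated twelfth-power-moment estimate for the ``zeta-like'' factors $S_i$ with $i>k$ (Lemma~\ref{ZetaBounds}). This occupies Lemmas~\ref{LargePolys}--\ref{SClose1}, and is the actual content of the paper; your proposal bypasses all of it and, as a result, proves a weaker statement than needed.

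Your $\sigma\le 7/16$ range is handled correctly (trivially, via $R^*\le R^3\le 8T_0^3$), and the closing sentence invoking Proposition~\ref{PropMainResult} with $\nu=1/4$ is fine, but the body of the argument for $\sigma\ge 7/16$ does not go through.
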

Heath-Brown used essentially the same argument thus far in \cite{HBII} and \cite{HBIII}, but worked with zeroes of $\zeta(s)$ instead of Dirichlet polynomials. The estimates on the density of zeroes used the zero detection method, which amounted to bounding the frequency with which Dirichlet polynomials take large values. The advantage we get from using Dirichlet polynomials throughout is that we have the additional condition that the total combined length $x_1$ of $S_1\dots S_{2k}$ is approximately $x$. If we did not have this restriction to our Dirichlet polynomials then we would only be able to produce the same result as Heath-Brown \cite{HBIII} . The critical case would have been when $\sigma_i=3/4$ $\forall i$, $x_1=\tau^{9/5}$ and $N_i=\tau^{2/5}$ or $1/2$ $\forall i$. But we cannot have a set of Dirichlet polynomials each with length $\tau^{2/5}$ and combined length $\tau^{9/5}$, and so the critical case cannot exist when we have this additional constraint. This allows us to improve the overall result.

We put
\begin{equation}
\mu=\frac{\log{x_1}}{\log{T_0}}
\end{equation}
to simplify notation. we note that since $x\ll x_1\ll x$, inequality \eqref{TauSize} implies that we only need consider
\begin{equation}
\frac{4}{3}\le \mu\le \frac{19}{9}.
\end{equation}
\subsection{Initial Estimates}
Our proof will make extensive use of the following three bounds on the frequency of large values taken by Dirichlet polynomials.

We consider a Dirichlet polynomial $S(t)=\sum_N^{2N}a_nn^{-c+it}$ which is a product of some of the $S_i$. Therefore $S=\prod_{i\in\mathcal{I}}S_i$, $N=\prod_{i\in\mathcal{I}}N_i$ and $N^{\sigma'}=\prod_{i\in\mathcal{I}}N_i^{\sigma_i}$ for some set $\mathcal{I}\subset \{1,\dots,2l\}$. We let
\begin{align*}
R(S;T)&=\#\biggl\{m\in\mathbb{Z}\cap[T,2T]:N^{-c+\sigma'}\le\sup_{t\in[m,m+1]}|S(t)|\le 2N^{-c+\sigma'}\biggr\},\\
R^*(S;T)&=\#\biggl\{(m_1,m_2,m_3,m_4)\in(\mathbb{Z}\cap[T,2T])^4:m_1+m_2=m_3+m_4,\\
&\qquad\qquad\qquad\qquad\qquad\qquad N^{-c+\sigma'}\le \sup_{t\in[m_i,m_i+1]}|S(t)|\le 2N^{-c+\sigma'}\forall i\biggr\}.
\end{align*}
Clearly we have $R(T)\le R(S;T)$ and $R^*(T)\le R^*(S;T)$. We note that the coefficients $a_n$ of $S$ satisfy $a_n=O_\delta(T_0^\delta)$ for every $\delta>0$.
\begin{lmm}[Montgomery's Mean Value Estimate]\label{MontMV}
We have
\[R(S;T)\ll (\log{N T})\left(N^{2-2\sigma'}+TN^{1-2\sigma'}\right)\left(\frac{\sum_{N}^{2N}|a_n|^2}{N}\right).\]
In particular, uniformly for $T_1\le T\le T_0$ and for any $\delta>0$, we have
\[R(T)\ll_\delta T_0^{\delta}N^{2-2\sigma'}+T_0^{1+\delta}N^{1-2\sigma'}.\]
\end{lmm}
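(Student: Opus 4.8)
The plan is to invoke the classical large values estimate for Dirichlet polynomials — Montgomery's mean value theorem in the form sharpened by Halász and Montgomery — and then specialize it to our situation. Recall the standard statement: if $S(t) = \sum_{N < n \le 2N} a_n n^{-c+it}$ and $t_1, \dots, t_R$ are $1$-separated points in an interval of length $\le 2T$ at which $|S(t_r)| \ge V$, then
\[
R V^2 \ll (\log NT)\left(N + T\right)\sum_{N<n\le 2N}|a_n|^2.
\]
The set counted by $R(S;T)$ consists of integers $m \in [T,2T]$ at which the supremum of $|S|$ over the unit interval $[m,m+1]$ is at least $N^{-c+\sigma'}$; choosing for each such $m$ a point $t_m \in [m,m+1]$ where $|S(t_m)| \ge N^{-c+\sigma'}$ (or arbitrarily close), we obtain a collection of points that are $1$-separated (since distinct integers $m$ differ by at least $1$, and the chosen points lie in disjoint unit intervals, hence are separated by distance $\ge$ some fixed positive amount after a trivial thinning, or one simply applies the large-values theorem with the weaker $\delta$-separation and absorbs the loss into the $\log$). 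Taking $V = N^{-c+\sigma'}$ and dividing through by $V^2 = N^{-2c+2\sigma'}$ gives
\[
R(S;T) \ll (\log NT)\, N^{2c}\left(N + T\right)N^{-2\sigma'}\frac{\sum_{N<n\le2N}|a_n|^2}{N}.
\]
Since $c = 1 + 1/\log y = 1 + o(1)$ and $N \ll x_1 \ll x$, we have $N^{2c} = N^{2}\cdot N^{2(c-1)} \ll N^2 \cdot x^{o(1)}$, and the factor $N^{2(c-1)}$ is harmless — it can be absorbed into the $(\log NT)$ factor (or, being cleaner about it, one notes $N^{c} \ll N$ up to a bounded factor since $N^{1/\log x} = O(1)$). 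This yields exactly
\[
R(S;T) \ll (\log NT)\left(N^{2-2\sigma'} + T N^{1-2\sigma'}\right)\left(\frac{\sum_{N<n\le2N}|a_n|^2}{N}\right),
\]
the first displayed bound.

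For the second (``in particular'') bound: we use $R(T) \le R(S;T)$, which was noted in the excerpt immediately before the statement, applied with $S = \prod_{i=1}^{2k} S_i$ the full polynomial, so $N = x_1$ and $\sigma' = \sigma$. The coefficients $a_n$ of this product satisfy $a_n = O_\delta(T_0^\delta)$ for every $\delta > 0$ — this is the divisor-type bound recorded just before the lemma (each $n \le 3x$ has $O(x^\epsilon)$ representations as a product of $2k$ factors, and each $S_i$ has coefficients bounded by $\log x$). Hence $\sum_{x_1 < n \le 2x_1}|a_n|^2 \ll x_1 \cdot T_0^{\delta}$, so $\left(\sum |a_n|^2\right)/x_1 \ll T_0^{\delta}$. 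Also $\log(x_1 T) \ll \log x_1 \ll T_0^{\delta}$ after enlarging $\delta$. Substituting into the first bound with $\sigma' = \sigma$ gives
\[
R(T) \ll_\delta T_0^{\delta}\left(x_1^{2-2\sigma} + T x_1^{1-2\sigma}\right) \ll_\delta T_0^{\delta} x_1^{2-2\sigma} + T_0^{1+\delta} x_1^{1-2\sigma},
\]
using $T \le T_0$ in the second term. This is precisely the claimed uniform estimate for $T_1 \le T \le T_0$.

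I do not anticipate a genuine obstacle here: the lemma is essentially a black-box application of a well-known large values theorem, and the only minor care needed is (a) the passage from ``$\sup$ over unit intervals $\ge V$'' to a $1$-separated set of points where $|S| \ge V$ — handled by the pigeonhole/thinning remark above, with any separation loss absorbed into logarithms — and (b) the replacement of $N^{c}$ by $N$ up to $O(1)$, valid since $c - 1 = 1/\log y$ and $N \le 3x$ force $N^{c-1} = O(1)$. If anything is ``the hard part,'' it is merely bookkeeping: making sure the coefficient bound $a_n \ll_\delta T_0^\delta$ and the logarithmic factors are all safely swallowed by a single $T_0^\delta$, which is routine given $\log x_1 \asymp \log T_0$ (indeed $\mu = \log x_1/\log T_0 \in [4/3, 19/9]$ is bounded). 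One should state the estimate with an arbitrary $\delta > 0$ precisely so that later applications — which combine $O(x_1^\epsilon)$ such bounds — lose nothing.
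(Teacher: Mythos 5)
Your approach is essentially the paper's: the first bound is exactly Montgomery's mean value theorem (Theorem~7.3 of \cite{Montgomery}), which the paper simply cites, and the ``in particular'' bound follows by combining $R(T)\le R(S;T)$ with the coefficient estimate $a_n=O_\delta(T_0^\delta)$ and the observation that $\log(NT)\ll_\delta T_0^\delta$ (the paper records this as $N\le x_1\le T_0^3$). You rederive the first statement from the general Hal\'asz--Montgomery large values estimate and explicitly dispose of the harmless $N^{2(c-1)}=O(1)$ normalization factor; that is more detail than the paper gives, but it is the same underlying lemma.

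One slip worth correcting: for the ``in particular'' bound you specialize to $S=\prod_{i=1}^{2k}S_i$, the full polynomial, so that $N=x_1$ and $\sigma'=\sigma$. But the statement of the lemma keeps $N^{2-2\sigma'}$ on the right with $N,\sigma'$ referring to an \emph{arbitrary} subproduct $S=\prod_{i\in\mathcal{I}}S_i$, as set up in the paragraph just before the lemma, and the bound is invoked in exactly that generality later on (e.g.\ in the proofs of Lemmas~\ref{LargePolys}, \ref{ssmall} and \ref{slarge}, where it is applied to strict subproducts $M$ or $N$ of the full polynomial). As written, you have proved only the $\mathcal{I}=\{1,\dots,2k\}$ instance. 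The repair is immediate: $R(T)\le R(S;T)$ holds for any such $S$, the coefficients of any subproduct are still $O_\delta(T_0^\delta)$, and $N\le x_1\le T_0^3$ for any $\mathcal{I}$, so your argument goes through verbatim once ``the full polynomial'' is replaced by ``any product of some of the $S_i$.'' You should also observe that $R(T)\le R(S;T)$ because all $2k$ factors being large at $t$ forces the subproduct to be large at $t$, which is what makes the reduction to a single subproduct legitimate.
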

\begin{proof}
The first statement is proven in \cite{Montgomery}[Theorem 7.3]. The second statement follows immediately from the first since $N\le x_1\le T_0^3$.
\end{proof}
\begin{lmm}[Huxley's Large Values Estimate]\label{HuxLV}
We have
\[R(S;T)\ll (\log{NT})^2\left(N^{2-2\sigma}+TN^{4-6\sigma}\right)\left(1+\frac{\sum_{N}^{2N}|a_n|^2}{N}\right)^3.\]
In particular, uniformly for $T_1\le T\le T_0$ and for any $\delta>0$, we have 
\[R(T)\ll_\delta T_0^{\delta}N^{2-2\sigma'}+T_0^{1+\delta}N^{4-6\sigma'}.\]
\end{lmm}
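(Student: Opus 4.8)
The plan is to obtain the first inequality from Huxley's large value theorem for Dirichlet polynomials, applied to the single polynomial $S=\prod_{i\in\mathcal I}S_i$ of length $N=\prod_{i\in\mathcal I}N_i$, and then to deduce the uniform-in-$T$ statement exactly as Montgomery's estimate was deduced in Lemma \ref{MontMV}; the genuine analytic input is the published theorem of Huxley, so the real work here is the normalisation and a routine deduction. Writing $S(c+it)=\sum_{N<n\le 2^{|\mathcal I|}N}(a_n n^{-c})\,e(-t\log n/2\pi)$ and noting that $n^{-2c}\asymp n^{-2}$ on this range (since $c=1+1/\log y$ and $N\le x_1\ll y$), we have $G_b:=\sum_n|a_n n^{-c}|^2\asymp N^{-2}\sum_n|a_n|^2$ and the tested value is $V=N^{-c+\sigma'}\asymp N^{\sigma'-1}$. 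Huxley's theorem (see \cite{Huxley}) bounds the number of $1$-separated $t_r\in[T,2T]$ with $|S(c+it_r)|\ge V$ by $\ll(\log NT)^{2}\bigl(NG_b\,V^{-2}+TNG_b^{3}V^{-6}\bigr)$; substituting $NG_b\asymp\sum|a_n|^2/N$, $V^{-2}\asymp N^{2-2\sigma'}$ and $V^{-6}\asymp N^{6-6\sigma'}$, and majorising $\sum|a_n|^2/N$ and its cube by $\bigl(1+\sum|a_n|^2/N\bigr)^{3}$, gives the stated estimate for $R(S;T)$, and hence for $R(T)$ since $R(T)\le R(S;T)$. For orientation, Huxley's theorem is proved by the Hal\'asz--Montgomery method: dualise $\sum_r|S(c+it_r)|$ against unimodular phases and apply Cauchy--Schwarz, producing the term $NG_bV^{-2}$ from the diagonal together with an off-diagonal sum $\sum_{r\ne s}\bigl|\sum_{N<n\le 2N}n^{i(t_r-t_s)}\bigr|$; this off-diagonal sum is estimated by H\"older together with Montgomery's mean value theorem for a suitable moment of $\sum_n n^{iu}$ over the well-spaced differences $t_r-t_s$, and this is what manufactures the factor $TN^{4-6\sigma'}$ and the cube of the coefficient mean square.

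The uniform-in-$T$ statement then follows exactly as in Lemma \ref{MontMV}. Recalling that the coefficients of $S$ satisfy $a_n=O_\delta(T_0^{\delta})$ for every $\delta>0$, we get $\sum_{N<n\le 2N}|a_n|^2/N\ll T_0^{2\delta}$ and $\bigl(1+\sum|a_n|^2/N\bigr)^{3}\ll T_0^{6\delta}$. Since also $T\le T_0$ and $N\le x_1\le T_0^{3}$, we have $(\log NT)^{2}\ll(\log T_0)^{2}\ll_\delta T_0^{\delta}$. Feeding these bounds into the first inequality and relabelling $\delta$ gives $R(T)\ll_\delta T_0^{\delta}N^{2-2\sigma'}+T_0^{1+\delta}N^{4-6\sigma'}$, as claimed.

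The one place that needs care --- and what I expect to be the main, if modest, obstacle --- is the bookkeeping in the normalisation: one must check that the length $N$ entering Huxley's theorem is the genuine $\prod_{i\in\mathcal I}N_i$ rather than $\prod 2N_i$ or the number of lattice points in the box (these differ only by harmless factors $O(1)$ or $2^{O(k)}$, but they must be tracked), that $\sigma'$ is exactly the exponent with $N^{\sigma'}=\prod_{i\in\mathcal I}N_i^{\sigma_i}$, and that the factors $N^{-c}\asymp N^{-1}$ coming from $c=1+1/\log y$ are correctly absorbed when converting the coefficient-normalised Huxley bound into powers of $N$. Once this is pinned down there is no further analytic content beyond the cited theorem, and the deduction is parallel to that for Lemma \ref{MontMV}.
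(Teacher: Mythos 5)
Your proposal is correct and follows essentially the same route as the paper: cite Huxley's large value theorem (Equation 2.9 of Huxley's paper) for the first inequality, then deduce the uniform-in-$T$ bound from $a_n=O_\delta(T_0^\delta)$ together with $N\le x_1\le T_0^3$ and $T\le T_0$, exactly as for Montgomery's estimate. The paper's proof is just the citation plus a one-line deduction; your write-up adds the normalisation bookkeeping and a sketch of the Hal\'asz--Montgomery argument behind Huxley's theorem, but the substance is the same.
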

\begin{proof}
The first statement is proven in \cite{Huxley}[Equation 2.9]. The second statement follows immediately from the first since $N\le x_1\le T_0^3$.
\end{proof}
\begin{lmm}[Heath-Brown's $R^*$ Bound]\label{HBR*}
For any $\delta>0$ we have
\begin{align*}
R^*(S;T)&\ll_\delta N^{1-2\sigma'}T^{\delta}(R(S;T)N+R(S;T)^2+R(S;T)^{5/4}T^{1/2})^{1/2}\\
&\qquad\times(R^*(S;T)N+R(S;T)^4+R(S;T)R^{*}(S;T)^{3/4}T^{1/2})^{1/2}.
\end{align*}
In particular, uniformly for $T_1\le T\le T_0$ and for any $\delta>0$, we have
\begin{align*}
R^*(T)&\ll_\delta N^{1-2\sigma'}T_0^{\delta}(R(T)N+R(T)^2+R(T)^{5/4}T_0^{1/2})^{1/2}\\
&\qquad\times(R^*(T)N+R(T)^4+R(T)R^{*}(T)^{3/4}T_0^{1/2})^{1/2}
\end{align*}
\end{lmm}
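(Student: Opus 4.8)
The plan is to establish the first inequality, for a general sub-product $S=\prod_{i\in\mathcal I}S_i$; the displayed ``in particular'' version then follows by combining it with the elementary bounds $R(T)\le R(S;T)$, $R^*(T)\le R^*(S;T)$, $T\le T_0$ and $N\le x_1\le T_0^3$ (so that any factor $N^\delta$ is absorbed into $T_0^\delta$) and rearranging. The first inequality is a Halász--Montgomery large-values estimate, carried out at the level of the additive energy $R^*$ rather than the counting function $R$; it is due to Heath-Brown, and I would reconstruct it as follows.

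\emph{Set-up.} Write $S(t)=\sum_{N<n\le 2N}b_nn^{it}$ with $b_n=a_nn^{-c}$. Since $c=1+1/\log y$ and $a_n\ll_\delta T_0^\delta$ we have $|b_n|\ll_\delta T_0^\delta N^{-1}$, hence $\sum_n|b_n|^2\ll_\delta T_0^{2\delta}N^{-1}$. Put $V=N^{-c+\sigma'}$. For each $m\in\mathcal S\cap[T,2T]$ fix a point $t_m\in[m,m+1]$ with $|S(t_m)|\ge V$ and a unimodular $\alpha_m$ with $\alpha_mS(t_m)=|S(t_m)|$; the $t_m$ are then $1$-separated. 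Note that $V^{-2}\sum_n|b_n|^2\ll_\delta T_0^{2\delta}N^{1-2\sigma'}$, which is exactly the factor standing in front of the asserted bound.

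\emph{Main argument.} Each of the $R^*(S;T)$ quadruples $(m_1,m_2,m_3,m_4)\in(\mathcal S\cap[T,2T])^4$ with $m_1+m_2=m_3+m_4$ contributes at least $V^4$ to $\sum|S(t_{m_1})S(t_{m_2})\overline{S(t_{m_3})S(t_{m_4})}|$, so
\begin{equation*}
R^*(S;T)\,V^4\le\sum_{\substack{m_1+m_2=m_3+m_4\\ m_i\in\mathcal S\cap[T,2T]}}\alpha_{m_1}\alpha_{m_2}\overline{\alpha_{m_3}\alpha_{m_4}}\,S(t_{m_1})S(t_{m_2})\overline{S(t_{m_3})S(t_{m_4})}.
\end{equation*}
Expand each $S(t_{m_j})=\sum_{n_j}b_{n_j}n_j^{it_{m_j}}$ and move the fourfold sum over $n_1,n_2,n_3,n_4$ outside. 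Two applications of Cauchy--Schwarz peel off the factor $\sum_n|b_n|^2$ and reduce matters to bounding sums of squares of exponential sums in the $t_m$; expanding these squares produces the resonance kernel $\sum_{N<n\le 2N}n^{iu}$, which is $\ll N(1+|u|)^{-1}$ by partial summation and, summed against the $1$-separated $t_m$, is controlled in aggregate by Montgomery's mean value estimate (Lemma~\ref{MontMV}) and by the fourth-moment bound $\int_{-2T}^{2T}\left|\sum_{N<n\le 2N}n^{iu}\right|^4du\ll_\delta T_0^\delta(N^2T+N^4)$. If one Cauchy--Schwarz is organised so that the summation variables pair off \emph{without} the additive constraint, the diagonal gives $\ll RN$, the ``square'' off-diagonal gives $\ll R^2$, and the fourth-moment off-diagonal gives $\ll R^{5/4}T^{1/2}$, i.e.\ a factor $\ll(RN+R^2+R^{5/4}T^{1/2})^{1/2}$; if the other is organised so that one copy of $m_1+m_2=m_3+m_4$ survives, the remaining count is forced to be an $R^*$-count and the same three inputs yield the self-referential factor $\ll(R^*N+R^4+RR^{*3/4}T^{1/2})^{1/2}$. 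Multiplying, dividing by $V^4$, and using $V^{-2}\sum_n|b_n|^2\ll_\delta T_0^\delta N^{1-2\sigma'}$ gives the claim.

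\emph{The main obstacle.} The crux is the bookkeeping in the double Cauchy--Schwarz: of the eight summation variables $n_1,\dots,n_4,m_1,\dots,m_4$ one must arrange that exactly one copy of $m_1+m_2=m_3+m_4$ is retained, so that the ``hard'' factor is genuinely an energy count (producing $R^*N$, $R^4$ and $RR^{*3/4}T^{1/2}$, rather than $RN$, $R^2$ and $R^{5/4}T^{1/2}$), while the ``easy'' factor collapses to a plain count $R$. Getting the exponents $5/4$ and $3/4$ exactly right requires feeding the fourth-moment estimate for $\sum_{N<n\le 2N}n^{iu}$ in at precisely the right stage together with one extra Hölder (or Cauchy--Schwarz) step for the cross contribution, and this is the part where I expect to have to follow Heath-Brown's argument line by line rather than improvise. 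Once the inequality is in hand, deducing Proposition~\ref{prop} is a matter of inserting the Montgomery and Huxley bounds (Lemmas~\ref{MontMV}, \ref{HuxLV}) for $R$ and bootstrapping in $R^*$, which is routine.
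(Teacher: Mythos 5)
Your deduction of the ``in particular'' version from the first inequality by monotonicity does not work. The right-hand side of the first inequality is \emph{increasing} in both $R(S;T)$ and $R^*(S;T)$; the bounds $R(T)\le R(S;T)$ and $R^*(T)\le R^*(S;T)$ therefore make the right-hand side \emph{smaller} if you try to substitute, so they cannot be combined with the first inequality and ``rearranged'' to yield the claimed self-referential bound for $R^*(T)$ in terms of $R(T)$ and $R^*(T)$. All you actually get this way is $R^*(T)\le R^*(S;T)\ll N^{1-2\sigma'}T^\delta(R(S;T)N+\cdots)^{1/2}(R^*(S;T)N+\cdots)^{1/2}$, which still involves the potentially much larger quantities $R(S;T)$, $R^*(S;T)$ on the right. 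What is actually needed --- and what the paper means by ``a precisely analogous argument applied to $R^*(T)$'' --- is to rerun Heath-Brown's resonance argument directly with the set $\mathcal{S}\cap[T,2T]$ (which defines $R(T)$) in place of the large-value set underlying $R(S;T)$: any $m\in\mathcal{S}$ still has $|S(t)|\gg N^{-c+\sigma'}$ for some $t\in[m,m+1]$, so the large-value hypothesis feeding the argument is preserved, while the pair and quadruple counts coming out of the two Cauchy--Schwarz steps are then over $\mathcal{S}$ and $\mathcal{S}^*$ and so produce $R(T)$ and $R^*(T)$ in the right places.

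On the first inequality itself: the paper supplies no argument, only a citation to equation~(33) of \cite{HBZeroDensity}, so there is nothing in the paper to compare your sketch against. Your outline of the Hal\'asz--Montgomery method at the level of additive energy, with the fourth moment of $\sum_{N<n\le 2N}n^{iu}$ as input, is the right shape, and you are candid that the exact double Cauchy--Schwarz bookkeeping producing the exponents $5/4$, $3/4$ and the split $RN+R^2+R^{5/4}T^{1/2}$ is taken from Heath-Brown rather than re-derived. That is an acceptable level of deferral given that the paper itself just cites the source --- but the step that the paper describes as ``precisely analogous'' is the one you replaced with an incorrect monotonicity argument, and that is the genuine gap to repair.
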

\begin{proof}
The first statement is proven in \cite{HBZeroDensity}[Equation 33]. The second statement follows from a precisely analogous argument applied to $R^*(T)$.
\end{proof}
In addition to these results, we will also require the following lemma.
\begin{lmm}\label{ZetaBounds}
Either
\[R(T)\ll x^{1-\sigma}(\log{x})^{-4k-2}\]
or for $k<i\le 2k$ we have
\[R(T)\ll (\log{T})^{41}T^2N_i^{6-12\sigma_i}\quad\text{and}\quad R(T)\ll (\log{T})^{13}TN_i^{2-4\sigma_i}.\]
\end{lmm}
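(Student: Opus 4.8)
The plan is to reduce everything to a single ``pure'' zeta polynomial and then exploit that such polynomials have small high moments. Fix $i$ with $k<i\le 2k$; since $R(T)\le R(S_i;T)$ it suffices to bound the latter. If $N_i=1/2$ then $S_i\equiv 1$, and because $c>1\ge\sigma_i$ no $m$ can lie in $\mathcal S$, so $R(T)=0$ and there is nothing to prove. Otherwise $S_i(s)=\sum_{N_i<n\le 2N_i}n^{-s}$ (the case $i=2k$, where the coefficients are $\log n_i$, reduces to this by partial summation at the cost of bounded powers of $\log x$), i.e.\ $S_i$ is a dyadic block of $\zeta(s)$ whose coefficients are bounded by divisor functions.

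First I would pass from $\sup$ over unit intervals to integrals by the standard device of Heath-Brown (\cite{HBI,HBII}), giving, for $r\in\{2,6\}$,
\[
R(S_i;T)\,N_i^{2r(\sigma_i-c)}\ll (\log x)\int_{T-1}^{2T+1}|S_i(c+it)|^{2r}\,dt .
\]
Then I would expand $|S_i(c+it)|^{2r}=\bigl|\sum_m b^{(r)}_m m^{-c-it}\bigr|^2$ with $b^{(r)}_m=\#\{(n_1,\dots,n_r):\prod_j n_j=m,\ N_i<n_j\le 2N_i\}\le d_r(m)$, supported on $m\asymp N_i^r$, and apply the Montgomery--Vaughan mean value theorem together with $\sum_{m\le X}d_r(m)^2\ll X(\log X)^{r^2-1}$ to get
\[
\int_T^{2T}|S_i(c+it)|^{2r}\,dt\ll (T+N_i^{r})\,N_i^{r-2rc}(\log x)^{r^2-1}.
\]
Since $c=1+1/\log y$ and $1\le N_i\le x^{19/20}$ force $N_i^{r-2rc}\asymp N_i^{-r}$, and $\log x\asymp\log T$, this yields
\[
R(T)\ll (T+N_i^{2})\,N_i^{2-4\sigma_i}(\log T)^{4},\qquad R(T)\ll (T+N_i^{6})\,N_i^{6-12\sigma_i}(\log T)^{36}.
\]

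If $N_i\le T^{1/3}$ then $T+N_i^2\le 2T$ and $T+N_i^6\le 2T^2$, so these are precisely the two asserted bounds, with room to spare in the $\log$-powers. The remaining task is $N_i>T^{1/3}$, where I would aim for the first alternative $R(T)\ll x^{1-\sigma}(\log x)^{-4k-2}$. Here the point is that a long zeta polynomial is small on the line $\mathrm{Re}(s)=c\approx 1$: Euler--Maclaurin gives $S_i(c+it)=\dfrac{N_i^{1-s}(1-2^{1-s})}{s-1}+O(N_i^{-1})$, hence $|S_i(c+it)|\ll 1/T$ once $N_i\gg T$, while for $T^{1/3}<N_i<T$ a van der Corput exponent-pair estimate gives $|S_i(c+it)|\ll T^{1/6}N_i^{-1/2}$. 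In either case membership $m\in\mathcal S$ forces an inequality between $\sigma_i$, $N_i$ and $T$; feeding this back, together with the identity $(1-\sigma)\log x_1=\sum_j(1-\sigma_j)\log N_j\ge(1-\sigma_i)\log N_i$ (valid because $\sigma_j\le 1$ for all $j$ by \eqref{Constraints}) and $x_1\asymp x$, should bound $R(T)$ by $x^{1-\sigma}$; to recover the extra $(\log x)^{-4k-2}$ saving and dispose of the borderline configurations I would also call on Montgomery's estimate (Lemma \ref{MontMV}).

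The \textbf{hard part} is exactly this regime $N_i\gtrsim T^{1/3}$: neither the plain moment bound nor the ``long sum is small'' estimate is by itself sharp enough, so they have to be combined, and the powers of $\log$ tracked carefully, so that $(\log T)^{13}$ and $(\log T)^{41}$ genuinely suffice and the $(\log x)^{-4k-2}$ saving in the first alternative is not spent. Everything outside that regime is a routine moment computation.
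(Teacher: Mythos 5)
Your argument for $N_i\le T^{1/3}$ is sound, but it diverges from the paper's method precisely in the regime you flag as ``hard'', and your proposed fallback there does not close the gap.

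The paper does not compute moments of $S_i$ directly by Montgomery--Vaughan at all. Instead, it uses Perron's formula to write the dyadic zeta block as a convolution of $\zeta(1/2+iu)$ against an approximate identity on $[T/2,5T/2]$, namely
\[
\Bigl|\sum_{N_i}^{2N_i}n^{-1/2-it}\Bigr|\ll\int_{T/2}^{5T/2}\bigl|\zeta(1/2+iu)\bigr|\,\frac{du}{1+|t-u|}+N_i^{1/2}T^{-1}\log x_1+1,
\]
and then applies H\"older together with Ingham's fourth power moment $\int_T^{2T}|\zeta(1/2+iu)|^4\,du\ll T\,(\log T)^4$ and Heath-Brown's twelfth power moment $\int_T^{2T}|\zeta(1/2+iu)|^{12}\,du\ll T^2(\log T)^{17}$. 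This yields $R(T)\ll(\log x_1)^{13}\bigl(TN_i^{2-4\sigma_i}+R(T)N_i^{4-4\sigma_i}T^{-4}\bigr)$ and the analogous twelfth-power inequality, from which either both stated bounds follow (when the secondary $R(T)$ term is negligible) or one is in the ``long polynomial'' regime $T\ll(\log x_1)^4N_i^{1-\sigma_i}$, which the paper then converts to the first alternative using the Vinogradov--Korobov zero-free region to extract a $(\log x_1)^{4k+6}$ factor from some other $N_{j'}^{1-\sigma_{j'}}$. The decisive point is that moments of $\zeta$ on $[T,2T]$ are bounded purely in $T$, with \emph{no dependence on} $N_i$.

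Your Montgomery--Vaughan route instead gives $\int_T^{2T}|S_i|^{2r}\,dt\ll(T+N_i^r)N_i^{r-2rc}(\log x)^{r^2-1}$, which for $r=6$ is worse than the paper's $T^2$-bound by a factor $N_i^6/T^2$ as soon as $N_i>T^{1/3}$. Concretely, take $N_i\asymp T^{1/2}$ and $\sigma_i=4/5$. The lemma asserts $R(T)\ll T^{2}N_i^{6-12\sigma_i}(\log T)^{41}=T^{1/5}(\log T)^{41}$, which the paper's zeta-moment bound delivers directly. Your twelfth-moment bound gives only $R(T)\ll N_i^6N_i^{-18/5}(\log T)^{36}=T^{6/5}(\log T)^{36}$, a loss of a full power of $T$, and your fourth-moment bound gives $R(T)\ll N_i^2N_i^{-6/5}(\log T)^{4}=T^{2/5}(\log T)^4$, still off by $T^{1/5}$. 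Your fallback does not rescue this: the van der Corput pointwise estimate $|S_i(c+it)|\ll T^{1/6}N_i^{-1/2}$ only forces $N_i^{\sigma_i-1/2}\ll T^{1/6}$, which is satisfied here (both sides $\asymp T^{3/20}$ vs.~$T^{1/6}$); and the first alternative $R(T)\ll x^{1-\sigma}(\log x)^{-4k-2}$ would demand (when the other $\sigma_j$ are close to $1$) $R(T)\ll N_i^{1-\sigma_i}=T^{1/10}$, which is stronger than either target and far out of reach of the trivial $R(T)\ll T$ combined with anything you have available. Montgomery's mean value estimate applied to $S_i$ alone or to the full product likewise produces bounds scaling like $N^{2-2\sigma}$, not $x^{1-\sigma}$. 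So the ``hard part'' is not merely a matter of combining your ingredients more carefully: the Perron-plus-zeta-moment step is the missing idea, and without it the regime $T^{1/3}\lesssim N_i\lesssim T$ is genuinely unreachable.
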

\begin{proof}
We follow the method of Heath-Brown in \cite{HBHuxley} but making use of the twelfth as well as the fourth power moment of the Zeta function.

We consider a polynomial $S_i$ with $k<i\le 2k$. Such a polynomial has all coefficients $1$ (if $i<2k$) or all coefficients $\log{n}$ (if $i=2k$). We first consider the case when the coefficients of $S_i$ are identically $1$.

From Perron's formula with $T_1\le T \le T_0$ we have for $T\le t\le 2T$ that
\begin{align*}
\left|\sum_N^{2N}n^{-1/2-it}\right|&=\left|\int_{d-iT/2}^{d+iT/2}\zeta(1/2+it+s)\frac{(2N)^s-N^s}{s}ds\right|+O(N^{1/2}T^{-1}(\log{x_1})+1)
\end{align*}
where $d=1/2+(\log{x_1})^{-1}$.

Moving the line of integration to $\Re(s)=0$ gives
\[\left|\sum_N^{2N}n^{-1/2-it}\right|\ll\int^{5T/2}_{T/2}\left|\zeta(1/2+iu)\right|\frac{du}{1+|t-u|}+N^{1/2}T^{-1}(\log{x_1})+1.\]
Let $(m_j)_1^{R(T)}\subset\mathbb{Z}\cap[T,2T]$ be such that
\[\sup_{t\in[m_j,m_j+1]}\left|\sum_N^{2N}n^{-1/2-it}\right|\gg N^{\sigma-1/2}.\]
Let $t_j$ be a point in $[m_j,m_j+1]$ where this supremum is attained.

Then, using H\"older's inequality and Heath-Brown's twelfth power moment bound for $\zeta(s)$ (see \cite{TwelfthPower}[Theorem 1]):
\begin{align*}
R(T)N^{12\sigma-6}&\ll\sum_{1\le j\le R(T)}\left|\sum_{N}^{2N}n^{-1/2+it_j}\right|^{12}\\
&\ll \sum_{1\le j\le R(T)}\left(\int_{T}^{2T}\left|\zeta(1/2+iu)\right|^{12}\frac{du}{1+|t_j-u|}\right)\left(\int_{T}^{2T}\frac{du}{1+|t_j-u|}\right)^{11}\\
&\qquad\qquad+R(T)N^6T^{-12}(\log{x_1})^{12}+R(T)\\
&\ll (\log{x_1})^{11}\int_{T}^{2T}\left|\zeta(1/2+iu)\right|^{12}\sum_{1\le j\le R(T)}\frac{1}{1+|t_j-u|}du\\
&\qquad\qquad+R(T)N^6T^{-12}(\log{x_1})^{12}+R(T)\\
&\ll T^2(\log{x_1})^{29}+R(T)N^6T^{-12}(\log{x_1})^{12}+R(T)\\
&\ll T^2(\log{x_1})^{29}+R(T)N^6T^{-12}(\log{x_1})^{12},
\end{align*}
since $R(T)\ll T$.

In the case $i=2k$ and all coefficients are $\log{n}$ we obtain by partial summation and the method above
\[R(T)N^{12\sigma-6}\ll T^2(\log{x_1})^{41}+R(T)N^{6}T^{-12}(\log{x_1})^{24}.\]
In either case we get
\begin{equation}\label{TwelfthPM}
R(T)\ll (\log{x_1})^{41}\left(T^2N^{6-12\sigma}+R(T)N^{12-12\sigma}T^{-12}\right).
\end{equation}
We can apply the same method, but using the fourth power moment of $\zeta(s)$ (see \cite{FourthPower}[Theorem B] for example) instead of the twelfth. We obtain (for both $\sum_N^{2N}n^{-1/2-it}$ and $\sum_N^{2N}(\log{n})n^{-1/2-it}$)
\begin{align*}
R(T)N^{4\sigma-2}&\ll (\log{x_1})^8\left( \int_{T/2}^{5T/2}\left|\zeta(1/2+iu)\right|^4\sum_{1\le j\le R(T)}\frac{1}{1+|t_j-u|}du\right)\\
&\qquad+(\log{x_1})^8\left(R(T)N^2T^{-6}+R(T)\right)\\
&\ll (\log{x_1})^{13}\left(T+R(T)N^2T^{-4}\right).
\end{align*}
Thus
\begin{equation}\label{FourthPM}
R(T)\ll (\log{x_1})^{13}\left(TN^{2-4\sigma}+R(T)N^{4-4\sigma}T^{-4}\right).
\end{equation}
From \eqref{TwelfthPM} and \eqref{FourthPM} we see one of the following must hold for any Dirichlet polynomial $S_j$ with $i>k$:\\
(i): $T \ll (\log{x_1})^4N_j^{1-\sigma_j}$\\
(ii): $R(T)\ll (\log{x_1})^{41}T^{2}N_j^{-6(2\sigma_j-1)}$ and $R(T)\ll (\log{x_1})^{13}TN_j^{2-4\sigma_j}$.

We are therefore left to show that $(i)$ implies that $R(T)\ll x_1^{1-\sigma}(\log{x_1})^{-4k-2}$.

We note that
\[R(T)\ll T\ll (\log{x_1})^4N_j^{1-\sigma_j}\ll(\log{x_1})^4x_1^{1-\sigma}\prod_{i\ne j}N_i^{\sigma_i-1}.\]
This is good enough to prove
\[R(T)\ll x_1^{1-\sigma}(\log{x_1})^{-4k-2}\]
provided that for some $j'\ne j$ we have
\[N_{j'}^{1-\sigma_{j'}}\gg (\log{x_1})^{4k+6}.\]
Since $N_i\ll x_1^{19/20}$ $\forall i$ there must be some $j'\ne j$ such that $N_{j'}\gg x_1^{1/40k}$ (since there are $2k$ polynomials whose combined length $\prod N_i$ is $x_1$). Thus we need to show that $\sigma_{j'}$ cannot be too close to $1$.

We put
\begin{equation}\label{EtaDef}
\eta=\eta(x_1)=C_0(\log{x_1})^{-2/3}(\log\log{x_1})^{-1/3}
\end{equation}
for some suitable constant $C_0>0$ (which we will declare later).

By Perron's formula we have for $t\in[T,2T]$ that
\begin{align*}
\left|S_{j'}(c+it)\right|&=\frac{1}{2\pi i}\int^{iT/2}_{-iT/2}F_{j'}(c+it+s)\frac{(2N_{j'})^s-N_{j'}^s}{s}ds+O(T^{-1}\log{x_1})
\end{align*}
where
\[F_{j'}(s)=
\begin{cases}
(\zeta(s))^{-1},&1\le j'\le k\\
\zeta(s),\qquad &k<j'<2k\\
\zeta'(s),&j'=2k\\
\end{cases}\]
In the region $1-2\eta-c\le \Re(s)\le 0$, $|t-\Im(s)|\le T/2$ we have
\[\left|F_{j'}(c+it+s)\right|\ll (\log{x_1})^2\]
for any $1\le j'\le 2k$. This follows from \cite{Titchmarsh}[Theorem 3.11] along with the Vinogradov-Korobov estimate as given in \cite{Richert} for a suitable choice of $C_0$.

We now move the line of integration to $\Re(s)=1-2\eta-c$ to obtain
\begin{align*}
|S_{j'}(c+it)|&\ll \int^{5T/2}_{T/2}|F_j(1-2\eta+it+is)|N_{j'}^{-2\eta}|s|^{-1}ds+O(T^{-1}\log{x_1})\\
&\ll (\log{x_1})^3(N_{j'}^{-2\eta}+T_1^{-1}).
\end{align*}
Thus, since $N_{j'}>x_1^{1/40k}$, we have $N_{j'}^{\eta/2}\gg (\log{x_1})^{4}$. This gives
\[|S_{j'}(c+it)|\le N_i^{-3\eta/2}.\]
Therefore we have
\begin{equation}\label{VingradovSigmaNearOne}
R=0\qquad\text{or}\qquad\sigma_{j'}\le 1-3\eta/2
\end{equation}
for any polynomial with $N_{j'}>x_1^{1/40k}$.

In particular either
\[R=0\ll x_1^{1-\sigma}(\log{x})^{-4k-2}\]
or
\[N_{j'}^{1-\sigma_{j'}}\gg \left(x_1^{1/40k}\right)^{3\eta/2}\gg (\log{x_1})^{4k+6}\]
which implies that
\[R(T)\ll x_1^{1-\sigma}(\log{x_1})^{-4k-6}.\]
Thus the lemma holds.
\end{proof}
We now summarise the other large-value estimates which we will make use of, which follow from published work by other authors.
\begin{lmm}\label{Published}
Either:
\begin{align*}
R(T)\ll x_1^{1-\sigma}(\log{x_1})^{-4k-2}
\end{align*}
or:\\
uniformly for $T_1\le T\le T_0$ we have 
\begin{align}
R(T)&\ll_\delta T_0^{(3-3\sigma)/(2-\sigma)+\delta},&\quad &\text{if }\sigma\le 3/4,\label{MontResult}\\
R(T)&\ll_\delta T_0^{(3-3\sigma)/(3\sigma-1)+\delta},&&\text{if }\sigma\ge 3/4,\label{HuxResult}\\
R(T)&\ll_\delta T_0^{(3-3\sigma)/(10\sigma-7)+\delta},&&\text{if }\sigma\le 25/28,\label{HB1Result}\\
R(T)&\ll_\delta T_0^{(4-4\sigma)/(4\sigma-1)+\delta},&&\text{if }\sigma\ge 25/28,\label{HB2Result}\\
R^*(T)&\ll_\delta  T_0^{(15-16\sigma)/2+\delta},&&\text{if }\sigma\le 3/4,\label{HB3Result}\\
R^*(T)&\ll_\delta  T_0^{(12-12\sigma)/(4\sigma-1)+\delta},&&\text{if }\sigma\ge 3/4\label{HB4Result}
\end{align}
for any $\delta>0$.
\end{lmm}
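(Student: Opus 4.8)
The plan is to obtain each of the six displayed inequalities as a transcription of a published large-value or zero-density estimate. The point is that the set $\mathcal S\cap[T,2T]$ — which has cardinality $R(T)$ and is automatically $1$-spaced — plays exactly the role that the nontrivial zeros of $\zeta$ with $\Re s\ge\sigma$ and $|\Im s|\in[T,2T]$ play in those arguments, while the sub-products $S_{\mathcal I}=\prod_{i\in\mathcal I}S_i$ (for which $R(T)\le R(S_{\mathcal I};T)$) take over the role of the zero-detecting Dirichlet polynomials. I would first record the trivial bound: $R(T)$ counts integers in an interval of length at most $T_0$, so $R(T)\le 2T_0$ unconditionally; hence if $x_1^{1-\sigma}(\log x_1)^{-4k-2}\ge 2T_0$ the first alternative already holds, and we may assume $x_1^{1-\sigma}\ll T_0^{1+\delta}$ for every $\delta>0$, equivalently $1-\sigma\le(1+\delta)\mu^{-1}$. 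This also disposes of the $\sigma$-ranges in which the relevant $T_0$-exponent is at least $1$, since there $R(T)\le 2T_0$ suffices; so the substance of the lemma lies in the $\sigma$-ranges where the relevant $T_0$-exponents drop below $1$ (and, as usual, each estimate is asserted only in the range of $\sigma$ in which its exponent is positive).

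With that reduction in place I would derive \eqref{MontResult} and \eqref{HuxResult} by applying Lemmas~\ref{MontMV} and \ref{HuxLV} to the sub-product (or single factor) of length closest to $T_0$ that the factorisation $\prod_iN_i=x_1$ allows, reproducing the classical derivations of Ingham's and Huxley's density bounds. For \eqref{HB1Result} and \eqref{HB2Result} I would combine the twelfth- and fourth-power-moment inequalities of Lemma~\ref{ZetaBounds} — which apply to the long factors $S_i$ with $i>k$ (those carrying no $\mu$) — with Montgomery's estimate, exactly as in Heath-Brown's density arguments in \cite{HBI} and \cite{HBII}; here the failure of the first alternative is precisely what allows one to trade the ambient length $x_1$ and the individual $N_i$ for powers of $T_0$. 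Finally \eqref{HB3Result} and \eqref{HB4Result} follow by inserting the resulting bounds on $R(T)$ (together with $R(T)\le 2T_0$ and $T\le T_0$) into Heath-Brown's $R^*$ inequality, Lemma~\ref{HBR*}, and simplifying, as in \cite{HBZeroDensity}. Each of the six inequalities should then be just a rearrangement of the corresponding published bound.

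The hard part will be the bookkeeping that forces every exponent to emerge in terms of $T_0$ alone. The large-value lemmas deliver estimates of the shape $T_0^\delta\bigl(N^{2-2\sigma'}+T_0N^{1-2\sigma'}\bigr)$, whose diagonal term $N^{2-2\sigma'}$ is harmful exactly when the chosen sub-product is longer than the interval $[T,2T]$; so in each configuration one must verify that either this diagonal term is dominated, or some factor $N_i$ with $i>k$ is long enough that Lemma~\ref{ZetaBounds} (with the Vinogradov--Korobov input already built into it) throws us into the first alternative, or a sub-product of length near $T_0$ genuinely exists. Keeping track of these cases, together with their interaction with the constraints $N_i\le x_1^{1/k}$ for $i\le k$, $N_i\le x_1^{19/20}$ for all $i$, and $\prod_iN_i=x_1$, is where essentially all of the effort goes; once the right sub-product is isolated in each regime, the six displayed inequalities are immediate.
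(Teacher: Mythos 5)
Your overall strategy---quote published large-value estimates for a suitably chosen sub-product, and fall back on Lemma~\ref{ZetaBounds} for the long $\zeta$-type factors when no short sub-product is available---is the same as the paper's. But your proposal leaves the decisive piece of machinery unspecified, and the criterion you do name for choosing the sub-product would not in fact deliver the stated bounds.

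The published results you want to quote bound $R(S;T)$ for a single polynomial $S$ with length $N\in[Y^{1/2},Y]$ \emph{and} exponent $\sigma'$, and the bound they give is in terms of $\sigma'$. To conclude the inequalities for $R(T)$ in terms of the global average $\sigma$, it is essential that the chosen sub-product have $\sigma'\ge\sigma$ (the bounds are decreasing in $\sigma$, so $\sigma'\ge\sigma$ is exactly what lets one replace $\sigma'$ by $\sigma$). Your rule of taking ``the sub-product of length closest to $T_0$'' makes no attempt to control $\sigma'$: that sub-product can easily have $\sigma'<\sigma$, in which case its large-value bound is genuinely weaker than the one you need and nothing forces the estimate through. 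The paper instead selects the factor $S_{j_1}$ of maximal exponent among those longer than $T_0^\delta$ (combining it with the single leftover short piece if necessary to preserve $\sigma'\ge\sigma$), so the length of the chosen $S$ is dictated by the $N_i$ and is not free. The second missing ingredient is what you then do about that length: if $N\le Y$ one raises $S$ to a bounded power to land the length in $[Y^{1/2},Y]$ (this is where the requirement $N>T_0^\delta$ matters, to keep the exponent, and hence the coefficient growth, $O_\delta(1)$); if $N>Y$, the length constraints on the $\mu$-pieces force $N_{j_1}$ to be a $\zeta$-piece, and one invokes the $N$-dependent bounds $R\ll T_0^{1+O(\delta)}N^{2-4\sigma}$ and $R\ll T_0^{2+O(\delta)}N^{6-12\sigma}$ of Lemma~\ref{ZetaBounds} together with $N>Y$ to recover the asserted $T_0$-exponent. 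Those six closing computations are not ``immediate'': each one needs the specific value of $Y$ from the cited source, a check of which of the two $\zeta$-bounds is the right one, and a verification such as $T_0^{1}N^{2-4\sigma}\le T_0^{(3-3\sigma)/(2-\sigma)}$ for $N\ge T_0^{3/(8-4\sigma)}$, with the analogous inequality in each of the other five cases. Without the maximal-$\sigma_{j_1}$ selection, the raise-to-a-power step, and these case-by-case numerics, your outline does not close.
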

\begin{proof}
We assume that $R\ll x_1^{1-\sigma}(\log{x_1})^{-4k-2}$ does not hold.

These bounds are usually obtained merely as an intermediate step in the zero detection method when trying to bound $N(\sigma,T)$ (or $N^*(\sigma,T)$). They are therefore not always explicitly stated as a lemma in the papers where they are obtained.

Since the published bounds all bound Dirichlet polynomials which arise from the zero detection method, they do not immediately apply to our context, since the Dirichlet polynomials we are considering can be slightly different. In particular, the results we will quote only apply to a Dirichlet polynomial $S$ with length $N\in [Y^{1/2},Y]$ (for some value of $Y\le T_0$) and coefficients which are $O_\delta(T_0^\delta)$ for every $\delta>0$.

We repeatedly combine any pair of polynomials of length $\le T_0^\delta$, so that there is at most one polynomial of length $\le T_0^\delta$. We only need to consider $\delta<1/(2k)$, and so any polynomial with length $\ge T_0^{3/k}$ must have all coefficients $1$ all coefficients $\log{n}$. This means that the $R$-bounds of Lemma \ref{ZetaBounds} still apply to any of the polynomials with length $\ge T_0^{3/k}$ after these combinations.

We pick a polynomial $S_{j_1}$ of length $N_{j_1}>T_0^\delta$ with $\sigma_{j_1}$ maximal.

If $\sigma_{j_1}< \sigma$ then, since $\sigma$ is an average of the $\sigma_i$, the polynomial $S_{j_2}$ with length $N_{j_2}\le T_0^\delta$ must exist and have $\sigma_{j_2}> \sigma$. In this case we combine the polynomials $S_{j_1}$ and $S_{j_2}$ to produce a polynomial $S$ of length $N$ and size $\sigma'\ge\sigma$.

If $\sigma_{j_1}\ge\sigma$ we take $S=S_{j_1}$ (and so $N=N_{j_1}$ and $\sigma'=\sigma$).

Since the bounds \eqref{MontResult}, \eqref{HuxResult}, \eqref{HB1Result}, \eqref{HB2Result}, \eqref{HB3Result} and , \eqref{HB4Result} are all decreasing in $\sigma$, it is sufficient to prove them for $R(S;T)$ and $R^*(S;T)$ when $\sigma'=\sigma$.

If $N\le Y$, then by raising the polynomial $S$ to a suitable exponent we can ensure that the new polynomial, $S'$ say, has length $N'\in [Y^{1/2},Y]$. The Dirichlet polynomials $S_i$ which we are considering (or any combination of them) have coefficients which are $O_\delta(T_0^\delta)$ for every $\delta>0$. Therefore the coefficients of $S'$ will also be $O_\delta(T_0^\delta)$ for every $\delta>0$ provided we have raised $S$ to an exponent which is $O_\delta(1)$. This is the case since by construction we have $N>T_0^\delta$. Therefore the published bound will hold if $N\le Y$.

If $N\ge Y$ then we will use Lemma \ref{ZetaBounds} to obtain the result (recalling that we have assumed that $R\ll x_1^{1-\sigma}(\log{x_1})^{-4k-2}$ does not hold). If $N=N_{j_1}$ or $N=N_{j_1}N_{J_2}$ then by choosing $k$ large enough we must have $N_{j_1}>T_0^{3/k}$, and so Lemma \ref{ZetaBounds} applies. If $N=N_{j_1}N_{j_2}$ then we have
\begin{equation}\label{ZBound1}
R(T)\ll R(T)T_0^{2\delta}N_{j_2}^{2-4\sigma_{j_2}}\ll_\delta T_0^{1+3\delta}N_{j_1}^{2-4\sigma_{j_1}}N_{j_2}^{2-4\sigma_{j_2}}\ll_\delta T_0^{1+3\delta}N^{2-4\sigma}
\end{equation}
and
\begin{equation}\label{ZBound2}
R(T)\ll R(T)T_0^{6\delta}N_{j_2}^{6-12\sigma_{j_2}} \ll_\delta T_0^{1+7\delta}N^{6-12\sigma}
\end{equation}
for any $\delta>0$. We see that \eqref{ZBound1} and \eqref{ZBound2} trivially follow from Lemma \ref{ZetaBounds} if $N=N_{j_1}$, and so they hold in either case.

We now establish \eqref{MontResult}, \eqref{HuxResult}, \eqref{HB1Result}, \eqref{HB2Result}, \eqref{HB3Result} and \eqref{HB4Result} in turn.

We see that \eqref{MontResult} holds trivially if $\sigma\le 2/3$. In the proof of Theorem 12.1 in \cite{Montgomery}, Montgomery shows that $R(S;T)\ll_\delta T_0^{(3-3\sigma)/(2-\sigma)+\delta}$ if $S$ has length $N\in[Y^{1/2},Y]$ with $Y=T_0^{3/(8-4\sigma)}$ and $1/2\le\sigma\le 3/4$. Therefore \eqref{MontResult} holds if $N\le T_0^{3/(8-4\sigma)}$ (since $N>T_0^\delta$). If $N\ge T_0^{3/(8-4\sigma)}$ then since $k\ge 9$ we must have $N_{j_1}>T_0^{3/k}$. Then by \eqref{ZBound1} we have
\[R(T)\ll_\delta T_0^{1+3\delta}N^{2-4\sigma}\ll T_0^{(7-8\sigma)/(4-2\sigma)+3\delta}\ll T_0^{(3-3\sigma)/(2-\sigma)+3\delta}\]
for any $\delta>0$ (since we only need to consider $\sigma\ge 2/3$). This establishes \eqref{MontResult}.

In the proof of inequality (19) in \cite{Huxley}, Huxley shows that $R(S;T)\ll_\delta T_0^{(3-3\sigma)/(3\sigma-1)+\delta}$ if $S$ has length $N\in[Y^{1/2},Y]$ with $Y=T_0^{3/(12\sigma-4)}$. Therefore \eqref{HuxResult} holds if $N\le T_0^{3/(12\sigma-4)}$ (since $N>T_0^\delta$). If $N\ge T_0^{3/(12\sigma-4)}$ then since $k\ge 9$ we must have $N_{j_1}>T_0^{3/k}$. Then by \eqref{ZBound2} we have
\[R(T)\ll_\delta T_0^{2+7\delta}N^{6-12\sigma}\ll T_0^{(5-6\sigma)/(6\sigma-2)+7\delta}\ll T_0^{(3-3\sigma)/(3\sigma-1)+7\delta}\]
for any $\delta>0$. This establishes \eqref{HuxResult}.

In the proof of Theorem 1 in \cite{HBZeroDensity}, Heath-Brown proves $R(S;T)\ll_\delta T_0^{(3-3\sigma)/(10\sigma-7)+\delta}$ if $S$ has length $N\in[Y^{1/2},Y]$ with $Y=T_0^{3/(40\sigma-28)}$ and $\sigma\le 25/28$. Therefore \eqref{HB1Result} holds if $N\le T_0^{3/(40\sigma-28)}$ (since $N>T_0^\delta$). If $N\ge T_0^{3/(40\sigma-28)}$ then since $k\ge 13$ we must have $N_{j_1}>T_0^{3/k}$. Then by \eqref{ZBound2} we have
\[R(T)\ll_\delta T_0^{2+7\delta}N^{6-12\sigma}\ll T_0^{(22\sigma-19)/(20\sigma-14)+7\delta}\ll T_0^{(3-3\sigma)/(10\sigma-7)+7\delta}\]
for any $\delta>0$ (since we are only considering $\sigma\le 25/28$ in \eqref{HB1Result}). This establishes \eqref{HB1Result}.

In the proof of Theorem 1 in \cite{HBZeroDensity}, Heath-Brown shows that $R(S;T)\ll_\delta T_0^{(4-4\sigma)/(4\sigma-1)+\delta}$ if $S$ has length $N\in[Y^{1/2},Y]$ with $Y=T_0^{1/(4\sigma-1)}$ and $\sigma\ge 25/28$. Therefore \eqref{HB2Result} holds if $N\le T_0^{1/(4\sigma-1)}$ (since $N>T_0^\delta$). If $N\ge T_0^{1/(4\sigma-1)}$ then since $k\ge 10$ we must have $N_{j_1}>T_0^{3/k}$. Then by \eqref{ZBound2} we have
\[R(T)\ll_\delta T_0^{2+7\delta}N^{6-12\sigma}\ll T_0^{(4-4\sigma)/(4\sigma-1)+7\delta}\]
for any $\delta>0$. This establishes \eqref{HB2Result}.

We see that \eqref{HB3Result} holds trivially if $\sigma\le 1/2$. In the proof of Theorem 2 in \cite{HBZeroDensity}, Heath-Brown shows that $R^*(S;T)\ll_\delta T_0^{(10-11\sigma)/(2-\sigma)+\delta}+T_0^{(18-19\sigma)/(4-2\sigma)+\delta}$ if $S$ has length $N\in[Y^{1/2},Y]$ with $Y=T_0^{1/2}$ and $1/2\le \sigma\le 3/4$. In particular, this gives $R(S;T)\ll_\delta T_0^{(15-16\sigma)/2+\delta}$ for any $\delta>0$ and $\sigma\le 3/4$. Therefore \eqref{HB3Result} holds if $N\le T_0^{1/2}$  (since $N>T_0^\delta$). If $N\ge T_0^{1/2}$ then since $k\ge 7$ we must have $N_{j_1}>T_0^{3/k}$. In this case, using the trivial bound $R^*(T)\ll (\log{T_0})R(T)^3$ and \eqref{ZBound1}, we have
\[R^*(T)\ll_\delta (\log{T_0})(T_0^{1+3\delta/4}N^{2-4\sigma})^3\ll T_0^{6-6\sigma+10\delta}\ll T_0^{(15-16\sigma)/2+10\delta}\]
for any $\delta>0$ (since we are only considering $\sigma\le 3/4$ in \eqref{HB3Result}). This establishes \eqref{HB3Result}.

In the proof of Theorem 2 in \cite{HBZeroDensity}, Heath-Brown proves $R^*(S;T)\ll_\delta T_0^{(12-12\sigma)/(4\sigma-1)+\delta}$ if $S$ has length $N\in[Y^{1/2},Y]$ with $Y=T_0^{1/(4\sigma-1)}$ and $\sigma\ge 3/4$. Therefore \eqref{HB4Result} holds if $N\le T_0^{1/(4\sigma-1)}$  (since $N>T_0^\delta$). If $N\ge T_0^{1/(4\sigma-1)}$ then since $k\ge 10$ we must have $N_{j_1}>T_0^{3/k}$. Then by \eqref{ZBound2} we have
\[R^*(T)\ll (\log{T_0})R(T)^3\ll_\delta (\log{T_0})(T_0^{2+7\delta}N^{6-12\sigma})^3\ll T_0^{(12-12\sigma)/(4\sigma-1)+22\delta}\]
for any $\delta>0$. This establishes \eqref{HB4Result}.
\end{proof}
To simplify notation we drop the $T$ from $R$ and $R^*$ since we are from now on only interested in them evaluated at $T$. Thus
\[R=R(T),\qquad R^*=R^*(T).\]
We now prove Proposition \ref{prop} by way of five lemmas. Lemma \ref{LargePolys} covers the case when some of the polynomials are long by using Lemma \ref{ZetaBounds}. Lemma \ref{ssmall} covers the case $\sigma\le 3/4$ by using Montgomery's mean-value estimate and Heath-Browns $R^*$ estimate. Lemma \ref{slarge} covers the case $\sigma\ge 3/4$ and $\mu$ `small' using the same method but using Huxley's large values estimate and Heath-Brown's $R^*$ estimate. Lemma \ref{mu} covers the case when $\sigma>3/4$ and $\mu$ is `large' using an adapted argument from \cite{HBZeroDensity} and Lemma \ref{ZetaBounds}. Lemma \ref{SClose1} deals with the range when $\sigma$ is very close to $1$ using Vinogradov's zero-free region of $\zeta(s)$ and Van-der-Corput's method of exponential sums.
%\newpage
\subsection{Part 1: Long Polynomials}
We first notice that we only need to consider polynomials of reasonably short length, where published estimates for the frequency with which they take large values apply.
\begin{lmm}\label{LargePolys}
Either we have one of
\[R\ll T_0x_1^{5/4-2\sigma+\epsilon},\quad R\ll x_1^{1-\sigma}(\log{x_1})^{-4k-2}\]
or we have
\begin{equation}
N_i\le T_0^{1/2+\epsilon}\label{ZetaPieceBound}
\end{equation}
for all but at most one $i$. If such an exceptional polynomial $S_j$ exists then $T_0^{1/2+\epsilon}\le N_j\le x_1^{3/5}$.
\end{lmm}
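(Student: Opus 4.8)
The plan is to show that if a polynomial $S_j$ is "very long", then either one of the stated $R$-bounds holds or $S_j$ must be short enough, and moreover that at most one exceptional long polynomial can exist. First I would observe that by the constraints \eqref{Constraints}, and since $x_1 \ll T_0^3$, if two polynomials each had length exceeding $T_0^{1/2+\epsilon}$ then their product would already exceed $T_0^{1+2\epsilon}$; combined with the fact that these must be among the "trivial-coefficient" polynomials with $i > k$ (since $N_i \le x_1^{1/k} \le T_0^{3/k}$ is small for $i \le k$), I would feed this into Lemma \ref{ZetaBounds}. Specifically, for a long polynomial $S_j$ with $j > k$, the first bound of Lemma \ref{ZetaBounds}, $R \ll (\log T)^{13} T N_j^{2-4\sigma_j}$, together with $T \le T_0$, gives $R \ll T_0 N_j^{2-4\sigma_j} (\log T_0)^{13}$.

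The key computation is then to convert $N_j^{2-4\sigma_j}$ into a bound in terms of $x_1^{5/4 - 2\sigma}$. Writing $x_1 = N_j \cdot \prod_{i \ne j} N_i$ and $x_1^\sigma = N_j^{\sigma_j} \prod_{i\ne j} N_i^{\sigma_i}$, one has to bound $\prod_{i\ne j} N_i^{\sigma_i}$ using $\sigma_i \le 1$, so $\prod_{i\ne j}N_i^{\sigma_i} \le \prod_{i\ne j}N_i = x_1/N_j$. The plan is to pick the \emph{longest} polynomial to be the (potential) exceptional one, so that $\prod_{i\ne j}N_i$ is controlled — indeed if $N_j \ge x_1^{3/5}$ then $\prod_{i\ne j}N_i \le x_1^{2/5}$, and one checks the exponent arithmetic: we want $T_0 N_j^{2-4\sigma_j} \ll T_0 x_1^{5/4-2\sigma+\epsilon}$, i.e. after substituting $\sigma = \sigma_j \frac{\log N_j}{\log x_1} + (\text{rest})$ and using $N_j \ge x_1^{3/5}$, the inequality reduces to a linear inequality in $\sigma_j$ and $\log N_j / \log x_1$ that holds in the relevant range. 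When $\sigma_j$ is large (close to $1$), one instead uses that $R \ll x_1^{1-\sigma}(\log x_1)^{-4k-2}$ must hold — this is exactly the dichotomy already packaged in Lemma \ref{ZetaBounds}, whose hypothesis "for $k < i \le 2k$" applies precisely to these long trivial-coefficient polynomials.

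For the upper cutoff $N_j \le x_1^{3/5}$ on the exceptional polynomial: if $N_j > x_1^{3/5}$ then $\prod_{i\ne j}N_i < x_1^{2/5}$, and since each of the remaining $2k-1$ polynomials with $i \le k$ satisfies $N_i \le x_1^{1/k}$, there's enough room; the point is that with $N_j$ this long, the bound $R \ll T_0 N_j^{2-4\sigma_j}$ combined with $N_j^{\sigma_j} = x_1^{\sigma - \sigma'_{\text{rest}}} \ge x_1^\sigma / x_1^{2/5}$ forces $N_j^{2-4\sigma_j} \le N_j^2 (x_1^\sigma/x_1^{2/5})^{-4} = N_j^2 x_1^{8/5 - 4\sigma}$, and with $N_j \le x_1^{19/20}$ this is $\le x_1^{19/10} x_1^{8/5-4\sigma} = x_1^{7/2 - 4\sigma}$, which one checks is $\le x_1^{5/4-2\sigma+\epsilon}$ whenever $\sigma \ge 9/8 - \epsilon/2$ — and if $\sigma$ is that large the dichotomy from Vinogradov (built into Lemma \ref{ZetaBounds}) kicks in. I would sort out the exact break-point so that the two regimes overlap.

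The main obstacle I anticipate is the bookkeeping of the exponent inequalities: one must verify that the three claimed outputs — $R \ll T_0 x_1^{5/4-2\sigma+\epsilon}$, $R \ll x_1^{1-\sigma}(\log x_1)^{-4k-2}$, and the structural conclusion $N_i \le T_0^{1/2+\epsilon}$ for all but one $i$ with that exception satisfying $T_0^{1/2+\epsilon} \le N_j \le x_1^{3/5}$ — genuinely cover all cases as $\sigma$ ranges over $(-\infty, 1]$ and $\mu = \log x_1/\log T_0 \in [4/3, 19/9]$. The delicate point is the interplay between $N_j$ being large (which makes the $L^\infty$-type bound on $S_j$ strong, hence $R$ small via Lemma \ref{ZetaBounds}) and $\sigma$ being large (which also forces $R$ small, but via Vinogradov); I would want to check that there is no gap between "the $\sigma$-driven dichotomy applies" and "the $N_j$-driven bound suffices", and that the numerology $3/5$, $1/2 + \epsilon$, $5/4$ all fit together with the constraint $\tau \le x^{3/4-\epsilon}$ from \eqref{TauSize} which controls how large $T_0 = \tau(\log x)^3$ can be relative to $x_1 \asymp x$.
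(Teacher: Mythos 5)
Your high-level plan — isolate a long polynomial $S_j$ with $j>k$, feed it to Lemma \ref{ZetaBounds}, and convert the resulting $N_j$-bound into the target $x_1$-bound — is pointed in the right direction, but the key step fails. You bound $\prod_{i\ne j}N_i^{\sigma_i}$ trivially by $\prod_{i\ne j}N_i = x_1/N_j$ (using $\sigma_i\le 1$), and then feed only the fourth-moment estimate $R\ll T_0N_j^{2-4\sigma_j}$ into the inequality $N_j^{2-4\sigma_j}\le x_1^{5/4-2\sigma+\epsilon}$. Writing $u=\log N_j/\log x_1$ and $\beta$ for the weighted size of the complementary product $M=x_1/N_j$, that inequality is equivalent to
\[
2u + 4\beta(1-u) \le \tfrac{5}{4}+2\sigma .
\]
With $u=3/5$ and $\beta=1$ (all $\sigma_i=1$ for $i\ne j$, which your trivial bound allows), the left side is $14/5=2.8$, while the right side is $5/4+2\sigma\le 11/4=2.75$ already at $\sigma=3/4$. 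So the claimed linear inequality does \emph{not} hold on the relevant range, and your fallback remark that the Vinogradov-type alternative from Lemma \ref{ZetaBounds} ``kicks in'' when this fails cannot be right either: your own arithmetic requires $\sigma\ge 9/8-\epsilon/2$, which is vacuous since $\sigma\le 1$, and the alternative branch of Lemma \ref{ZetaBounds} is a genuine dichotomy coming from the Vinogradov--Korobov zero-free region, not a bound one may invoke whenever $\sigma_j$ is merely ``large''.

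What's missing is the interpolation that the paper actually performs. The complementary product $M$ is not estimated trivially; it is fed into Montgomery's mean-value estimate (and Huxley's large-values estimate), giving $R\ll M^{2-2\beta}T_0^\epsilon+\min(T_0^{1+\epsilon}M^{1-2\beta},T_0^{1+\epsilon}M^{4-6\beta})$. For the long part one needs \emph{both} the fourth-power and twelfth-power moment bounds from Lemma \ref{ZetaBounds}, $R\ll T_0^{1+\epsilon}N_j^{2-4\sigma_j}$ and $R\ll T_0^{2+\epsilon}N_j^{6-12\sigma_j}$. The conclusion is then obtained by H\"older/convexity between these several bounds with carefully chosen weights: in the paper's Case 1 (two polynomials of length $>T_0^{1/2+\epsilon}$, combined into one of length $N>T_0^{1+2\epsilon}$) the choice is weights $(1/2,1/2)$ in the branch yielding $R\ll x_1^{1-\sigma}(\log x_1)^{-4k-2}$ and $(1/4,1/4,1/4,1/4)$ in the branch yielding $R\ll T_0x_1^{5/4-2\sigma}$; in Case 2 ($N_j>x_1^{3/5}$) the weights are $(6/7,1/7)$ with the $1/7$ weight landing on the \emph{twelfth}-power moment, not the fourth. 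Your Case 1 is only sketched (``feed this into Lemma \ref{ZetaBounds}'' with no combination or weight computation), and your Case 2, which is the one you flesh out, replaces the needed Montgomery bound on $M$ by the trivial bound and drops the twelfth-power moment entirely. Without both of these ingredients the numerology genuinely breaks, as the $u=3/5$, $\beta=1$ example shows, so the proposal as written does not prove the lemma.
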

\begin{proof}
We assume that $R\ll x_1^{1-\sigma}(\log{x})^{-4k-2}$ does not hold. Therefore the results of Lemmas \ref{ZetaBounds} and \ref{Published} apply.

We consider polynomials $S_i$ with $N_i>T_0^{1/2}$. Since we are taking $k\ge 6$, by \eqref{MuPieceBound} any such polynomial with `long' length must be one where all coefficients are 1 or $\log(n)$. This means we can use Lemma \ref{ZetaBounds} to get stronger than normal bounds.

\textbf{Case 1: }There are at least 2 such values of $j$ such that $N_j>T_0^{1/2+\epsilon}$.

Let $j_1,j_2$ be two values of $j$ such that $N_j>T_0^{1/2+\epsilon}$. We let $N=N_{j_1}N_{j_2}$ ($>T_0^{1+2\epsilon}$), $N^\alpha=N_{j_1}^{\sigma_{j_1}}N_{j_2}^{\sigma_{j_2}}$, $M=\prod_{i\ne j_1,j_2}N_i$, $M^\beta=\prod_{i\ne j_1,j_2}N_i^{\sigma_i}$. By Lemmas \ref{MontMV}, \ref{HuxLV} and \ref{ZetaBounds} we have for any $\delta>0$ that
\begin{align*}
R&\ll_\delta (T_0^{1+\delta}N_{j_1}^{2-4\sigma_{j_1}})^{1/2}(T_0^{1+\delta}N_{j_2}^{2-4\sigma_{j_2}})^{1/2}=T_0^{1+\delta}N^{1-2\alpha},\\
R&\ll_\delta (T_0^{2+\delta}N_{j_1}^{6-12\sigma_{j_1}})^{1/2}(T_0^{2+\delta}N_{j_2}^{6-12\sigma_{j_2}})^{1/2}=T_0^{2+\delta}N^{3-6\alpha},\\
R&\ll_\delta M^{2-2\beta}T_0^{\delta}+\min(T_0^{1+\delta}M^{1-2\beta},T_0^{1+\delta}M^{4-6\beta}).
\end{align*}
We choose $\delta=\epsilon/2$, and so the implied constants only need to depend on $\epsilon$.

If $R\ll M^{2-2\beta}T_0^{\epsilon/2}$ then since $N\ge T^{1+2\epsilon}$ we have
\begin{align*}
R&\ll (T_0^{1+\epsilon/2}N^{1-2\alpha})^{1/2}(T_0^{\epsilon/2}M^{2-2\beta})^{1/2}\\
&=T_0^{1/2+\epsilon/2}N^{-1/2}x_1^{1-\sigma}\\
&\ll x_1^{1-\sigma}T_0^{-\epsilon/2}\\
&\ll x_1^{1-\sigma}(\log{x_1})^{-4k-2}.
\end{align*}
If $R\ll \min(T_0^{1+\epsilon/2}M^{1-2\beta},T_0^{1+\epsilon/2}M^{4-6\beta})$ then since $N\ge T$
\begin{align*}
R&\ll (T_0^{1+\epsilon/2}M^{1-2\beta})^{1/4}(T_0^{1+\epsilon/2}M^{4-6\beta})^{1/4}(T_0^{1+\epsilon/2}N^{1-2\alpha})^{1/4}(T_0^{2+\epsilon/2}N^{3-6\alpha})^{1/4}\\
&=T_0^{5/4+\epsilon/2}N^{-1/4}x_1^{5/4-2\sigma}\\
&\ll T_0x_1^{5/4-2\sigma}.
\end{align*}
Therefore if there are two polynomials with length $\ge T_0^{1/2+\epsilon}$ then
\[R\ll x_1^{1-\sigma}(\log{x_1})^{-4k-2}\qquad\text{or}\qquad R\ll T_0x_1^{5/4-2\sigma+\epsilon}.\]
\textbf{Case 2: }There is a $j$ such that $N_j>x_1^{3/5}$.

We consider the long polynomial $S_j$ and its complement. To ease notation we let $N=N_j$, $N^\alpha=N_j^{\sigma_j}$, $M=\prod_{i\ne j}N_i$ with $M^{\beta}=\prod_{i\ne j}N_i^{\sigma_i}$. Then by Lemmas \ref{ZetaBounds} and \ref{MontMV} (choosing $\delta=\epsilon$) we have
\begin{align*}
R&\ll (M^{2-2\beta}T_0^{\epsilon}+T_0^{1+\epsilon}M^{1-2\beta})^{6/7}(T_0^{2+\epsilon}N^{6-12\alpha})^{1/7}\\
&\ll T_0^{2/7+\epsilon}x_1^{12(1-\sigma)/7}N^{-6/7}+T_0^{8/7+\epsilon}x_1^{6/7-12\sigma/7}
\end{align*}
Since $N>x_1^{3/5}$ and $4/3\le \mu\le 19/9$ this gives
\[R\ll T_0x_1^{5/4-2\sigma+\epsilon}.\]
Therefore the Lemma holds.
\end{proof}
We note that inequalities \eqref{MuPieceBound} and \eqref{ZetaPieceBound} are vital in our treatment of the problem in this way. The $S_i$ for $i\le k$ are `difficult' since the coefficients $\mu(n)$ have complicated behaviour, but by increasing $k$ we can ensure these polynomials do not cause too many problems. This is because we have effective bounds on the number of large values reasonably short Dirichlet polynomials can take. We do not have the same method of controlling the length of $S_i$ for $i>k$, but these polynomials have `well-behaved' coefficients. This allows us to produce much stronger bounds in Lemma \ref{ZetaBounds} and so cope with the longer polynomials.

From now on we assume that $N_i\le T_0^{1/2+\epsilon}$ $\forall i$ except for possibly one exceptional polynomial $S_j$ with $T_0^{1/2+\epsilon}\le N_j\le x_1^{3/5}$.
%\newpage
\subsection{Part 2: $\sigma\le3/4$}
\begin{lmm}\label{ssmall}
Let $\sigma\le 3/4$. Then either
\[R\ll x_1^{1-\sigma}(\log{x_1})^{-4k-2}\]
or
\[R\ll T_0x_1^{5/4-2\sigma+2\epsilon}\]
or
\[R^*\ll T_0x_1^{13/4-4\sigma+6\epsilon}.\]
\end{lmm}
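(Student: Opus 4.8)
The plan is to follow Heath-Brown's treatment of $N^*(\sigma,T)$ for $\sigma\le 3/4$ in \cite{HBIII}, combining Montgomery's mean-value estimate for $R$ with Heath-Brown's $R^*$ estimate, while exploiting at the crucial point the constraint $\prod_{i}N_i=x_1\asymp x$ that is absent from the classical zero-detection setting. Throughout I assume the first alternative $R\ll x_1^{1-\sigma}(\log x_1)^{-4k-2}$ fails, so that the substantive conclusions of Lemmas \ref{ZetaBounds}, \ref{Published} and \ref{LargePolys} are all available; in particular (Lemma \ref{LargePolys}) I may assume $N_i\le T_0^{1/2+\epsilon}$ for every $i$ bar at most one exceptional polynomial $S_j$ with $T_0^{1/2+\epsilon}\le N_j\le x_1^{3/5}$, which I postpone. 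The aim is to prove $R\ll T_0x_1^{5/4-2\sigma+2\epsilon}$ or $R^*\ll T_0x_1^{13/4-4\sigma+6\epsilon}$.

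First I would apply Montgomery's mean-value estimate (Lemma \ref{MontMV}) to the full product $S=\prod_{i=1}^{2k}S_i$, which has length $x_1$, size $\sigma$ and coefficients $O_\delta(T_0^\delta)$, obtaining
\[R\ll_\delta T_0^{\delta}x_1^{2-2\sigma}+T_0^{1+\delta}x_1^{1-2\sigma}.\]
Since the second term is $\ll T_0x_1^{5/4-2\sigma+\epsilon}$ I may assume $R\ll_\delta T_0^{\delta}x_1^{2-2\sigma}$. Combined with the published bound \eqref{MontResult}, $R\ll_\delta T_0^{(3-3\sigma)/(2-\sigma)+\delta}$, this already delivers $R\ll T_0x_1^{5/4-2\sigma+2\epsilon}$ whenever $\mu=\log x_1/\log T_0$ is not too large; at the opposite extreme the published bound \eqref{HB3Result}, $R^*\ll_\delta T_0^{(15-16\sigma)/2+\delta}$, delivers $R^*\ll T_0x_1^{13/4-4\sigma+\epsilon}$ once $\mu$ is large enough. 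A short computation using $4/3\le\mu\le19/9$ isolates the bounded intermediate window of $(\sigma,\mu)$ (around $\sigma=3/4$, $\mu\approx 9/5$) that these two do not cover.

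For that window I would argue by contradiction: suppose $R\gg T_0x_1^{5/4-2\sigma+2\epsilon}$, otherwise we are done. Combining this with $R\ll_\delta T_0^{\delta}x_1^{2-2\sigma}$, with \eqref{MontResult}, with the trivial $R\ll T_0$, with $R\gg x_1^{1-\sigma-\epsilon}$ forced by the failure of the first alternative, and with the long-factor bounds $R\ll(\log T)^{13}TN_i^{2-4\sigma_i}$, $R\ll(\log T)^{41}T^2N_i^{6-12\sigma_i}$ of Lemma \ref{ZetaBounds} should confine $\sigma$ to a neighbourhood of $3/4$, pin $\mu$ into a narrow interval, and constrain the individual $N_i$ and $\sigma_i$. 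I would then bound $R^*$ by feeding these estimates into Heath-Brown's self-improving inequality (Lemma \ref{HBR*}), applied with $N$ the length of a suitably chosen sub-product of the $S_i$. The decisive input, as noted in the discussion following Proposition \ref{prop}, is that requiring each $N_i$ to be short while $\prod_i N_i=x_1$ rules out the configuration that would otherwise be extremal ($\sigma_i=3/4$, $N_i\approx\tau^{2/5}$ or $1/2$, $x_1\approx\tau^{9/5}$), since no family of Dirichlet polynomials of individual length $\approx\tau^{2/5}$ can have combined length $\approx\tau^{9/5}$; this is what forces $R^*\ll T_0x_1^{13/4-4\sigma+6\epsilon}$ and yields the improvement over \cite{HBIII}.

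Finally, if the exceptional $S_j$ is present I would peel it off, treat the complementary product $\prod_{i\ne j}S_i$ (of length $\le x_1T_0^{-1/2-\epsilon}$, hence short) as a single Dirichlet polynomial amenable to Lemmas \ref{MontMV} and \ref{Published}, bound $S_j$ via the twelfth- and fourth-power-moment estimates of Lemma \ref{ZetaBounds}, and recombine by H\"older's inequality as in the proof of Lemma \ref{LargePolys}. I expect the main obstacle to be the intermediate-window analysis: Lemma \ref{HBR*} branches according to which of three terms dominates each of its two factors, and in each branch one must check that the outcome either yields $R^*\ll T_0x_1^{13/4-4\sigma+6\epsilon}$ outright or forces $R$ down to $O(T_0x_1^{5/4-2\sigma+2\epsilon})$, while verifying that $\mu\le19/9$ genuinely leaves no residual gap. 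This exponent-chasing, together with the check that the length constraint is strong enough to eliminate the recalcitrant branches, is the heart of the matter.
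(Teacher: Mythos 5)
Your high-level strategy matches the paper's: carve off the regions of $(\sigma,\mu)$ covered by the published estimates \eqref{MontResult} and \eqref{HB3Result} (plus the trivial bound $R\ll T_0$ for $\sigma\le 5/8$), leaving a window of roughly $7/10\le\sigma\le 3/4$ and $8/5\le\mu\le 2$; then attack that window by combining the $S_i$ into factors and applying Montgomery's mean-value estimate together with Heath-Brown's $R^*$ inequality to a short sub-product. You also correctly identify the decisive structural input, namely that the length constraint from Lemma \ref{LargePolys} together with $\prod_i N_i=x_1$ rules out the extremal configuration. So the plan is pointed in the right direction.

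However, there are genuine gaps. First, your opening move of applying Lemma \ref{MontMV} to the full product of length $x_1$ is a dead end: for $\mu\ge 4/3$ the first term $T_0^\delta x_1^{2-2\sigma}$ already exceeds $T_0 x_1^{5/4-2\sigma}$, so you gain nothing beyond \eqref{MontResult}. The paper's proof instead combines the $S_i$ into two sub-products $M\le N$ with $MN=x_1$, applies Montgomery to \emph{each}, and works with the minimum $R\ll\min\bigl(T_0^\epsilon M^{2-2\alpha}+T_0^{1+\epsilon}M^{1-2\alpha},\,T_0^\epsilon N^{2-2\beta}+T_0^{1+\epsilon}N^{1-2\beta}\bigr)$; that minimum, not the one-factor Montgomery bound, is what powers the window argument. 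Second, and more seriously, you defer the one step the lemma really turns on --- the choice of ``suitably chosen sub-product'' to feed into Lemma \ref{HBR*}. The paper constructs it by a greedy combination argument (repeatedly adjoining the $S_i$ in decreasing order of length), proving that one can always take $\min(x_1^{2/5},x_1T_0^{-1-2\epsilon})\le M\le\max(x_1^{3/5},T_0^{1+2\epsilon})$; that lower bound on $M$ is exactly what ensures $RM\gg R^2$ and $RM\gg R^{5/4}T_0^{1/2}$ in Lemma \ref{HBR*}, which is what makes the three-way case split (the analogue of your ``recalcitrant branches'') close. Without fixing the lower bound on $M$, you cannot verify any of those branches, and ``exponent-chasing'' in the abstract is not a proof.

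Third, your plan to peel off the exceptional polynomial $S_j$ (with $T_0^{1/2+\epsilon}\le N_j\le x_1^{3/5}$), estimate the complement by Montgomery, estimate $S_j$ via Lemma \ref{ZetaBounds}, and recombine by H\"older is a departure from the paper, which simply absorbs $S_j$ into one of the two factors $M,N$ --- its upper bound $N_j\le x_1^{3/5}$ being precisely what makes the greedy combination possible. Your peel-off may recover an $R$ bound by H\"older, but it is not clear how it delivers the required $R^*$ bound: Lemma \ref{HBR*} is stated for a single Dirichlet polynomial, and after separating $S_j$ from its complement you have two pieces with incompatible moment inputs. Finally, appealing to the individual twelfth- and fourth-power-moment bounds of Lemma \ref{ZetaBounds} inside this lemma's window is unnecessary and somewhat misleading; the paper's proof of Lemma \ref{ssmall} uses only Lemmas \ref{MontMV}, \ref{HBR*}, \ref{Published} and the combination trick, with Lemma \ref{ZetaBounds} reserved for Lemmas \ref{LargePolys}, \ref{mu} and \ref{SClose1}.
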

\begin{proof}
We assume that $R\ll x_1^{1-\sigma}(\log{x_1})^{-4k-2}$ does not hold. Therefore the results of Lemma \ref{Published} apply.

The result follows from published estimates of Lemma \ref{Published} unless $8/5\le \mu\le 2$ and $7/10\le \sigma\le 3/4$.

For $\sigma\le 5/8$ we use the trivial estimate
\[R\ll T_0\ll T_0x^{5/4-2\sigma}.\]
By \eqref{MontResult} (choosing $\delta=\epsilon$) we have
\[R\ll T_0^{(3-3\sigma)/(2-\sigma)+\epsilon}.\]
This gives $R\ll T_0x^{5/4-2\sigma+\epsilon}$ if $\mu\le 2$ and $5/8\le\sigma\le 7/10$ or if $\mu\le 8/5$ and $7/10\le\sigma\le 3/4$.

By \eqref{HB3Result} (choosing $\delta=\epsilon$) we have
\[R^*\ll T_0^{(15-16\sigma)/2+\epsilon}.\]
This gives $R\ll T_0x^{13/4-4\sigma+\epsilon}$ if $\mu\ge 2$.

These cover all ranges of $\mu$ and $\sigma$ unless $8/5\le \mu\le 2$ and $7/10\le \sigma\le 3/4$. We now consider this case.

We combine the polynomials to produce two polynomials of length $M,N$ and size $\alpha,\beta$. (so $M=\prod_{i\in I_1}N_i$, $M^\alpha=\prod_{i\in I_1}N_i^{\sigma_i}$, $N=\prod_{i\in I_2}N_i$, $N^\beta=\prod_{i\in I_2}N_i^{\sigma_i}$ for some disjoint $I_1,I_2\subset \{1,\dots 2k\}$ with $I_1\cup I_2=\{1,\dots, 2k\}$). We will declare how we combine the polynomials later. We let $M$ be the smaller of the two (so $M\le N$). Therefore we have
\[x_1=T_0^{\mu}=MN,\qquad M^{\alpha}N^{\beta}=x_1^\sigma.\]
Since $\mu\le 2$, we have $T_0^2\ge x_1=MN\ge M^2$, and so $M\le T_0$.\

By Lemma \ref{MontMV} (choosing $\delta=\epsilon$) we have
\[R\ll\min(T_0^{\epsilon}M^{2-2\alpha}+T_0^{1+\epsilon}M^{1-2\alpha},T_0^{\epsilon}N^{2-2\beta}+T_0^{1+\epsilon}N^{1-2\beta}).\]
We note that the first term in each component dominates iff the polynomial has length $\ge T_0$. Since $M\le T_0$, we always have the second term ($T_0^{1+\epsilon}M^{1-2\alpha}$) dominating the first component of the minimum. We split the argument into two cases, dependent on which term is larger in the second component of the minimum.

\textbf{Case 1: $N\le T_0^{1+2\epsilon}$}.

In this case we have
\[N^{2-2\beta}T_0^\epsilon+T_0^{1+\epsilon}N^{1-2\beta}\ll T_0^{1+3\epsilon}N^{1-2\beta}.\]
Hence
\begin{align*}
R&\ll \min(T_0^{1+\epsilon}M^{1-2\alpha},T_0^{1+3\epsilon}N^{1-2\beta})\\
&\ll (T_0^{1+\epsilon}M^{1-2\alpha})^{1/2}(T_0^{1+3\epsilon}N^{1-2\beta})^{1/2}\ll T_0x_1^{1/2-\sigma+2\epsilon}.
\end{align*}
But for $\sigma\le 3/4$ we have $\frac{1}{2}-\sigma\le \frac{5}{4}-2\sigma$. Thus
\[R\ll T_0x_1^{5/4-2\sigma+2\epsilon}.\]
\textbf{Case 2: $N> T_0^{1+2\epsilon}$}.

We have by Lemmas \ref{MontMV} and \ref{HBR*}
\[R\ll \min(N^{2-2\beta+\epsilon},T_0M^{1-2\alpha+\epsilon}),\]
\[R^*\ll M^{1-2\alpha}T_0^{\epsilon}(RM+R^2+R^{5/4}T_0^{1/2})^{1/2}(R^*M+R^4+RR^{*3/4}T_0^{1/2})^{1/2}.\]
But $RM\ge R^2,R^{5/4}T_0^{1/2}$ if $R\le M,M^{4}T_0^{-2}$.

But we have
\[R^2\le N^{2-2\beta}T_0^{1+2\epsilon}M^{1-2\alpha}=T_0^{1+2\epsilon}x_1^{2-2\sigma}M^{-1}\]
Thus $R\le MT_0^\epsilon$ if $M\ge x_1^{2(1-\sigma)/3}T_0^{1/3}$ and $R\le M^4T_0^{-2+\epsilon}$ if $M\ge x_1^{2(1-\sigma)/9}T_0^{5/9}$. Hence, if
\[M\ge \max(x_1^{2(1-\sigma)/9}T_0^{5/9},x_1^{2(1-\sigma)/3}T_0^{1/3})\] then
\[RM+R^2+R^{5/4}T_0^{1/2}\ll RMT_0^\epsilon\]
so
\[R^*\ll T_0^{3\epsilon}(M^{4-4\alpha}R+M^{3/2-2\alpha}R^{5/2}+M^{12/5-16\alpha/5}T_0^{2/5}R^{8/5}).\]
We now consider separately each of the three terms dominating.

\textbf{Case 2A: $R^*\ll T_0^{3\epsilon}M^{4-4\alpha}R$}.

\[RR^*\ll T_0^{3\epsilon}M^{4-4\alpha}R^2\ll T_0^{5\epsilon}M^{4-4\alpha}N^{4-4\beta}\ll x_1^{4-4\sigma+5\epsilon}.\]
Since $\sigma\le 3/4$, $\mu\le 2$ we have
\[(2\sigma-1/2)\mu\le 2.\]
Thus
\[RR^*\ll T_0^2x_1^{9/2-6\sigma+5\epsilon}.\]
It follows that either
\[R\ll T_0x_1^{5/4-2\sigma+2\epsilon}\]
or
\[R^*\ll T_0x_1^{13/4-4\sigma+3\epsilon}.\]
\textbf{Case 2B: $R^*\ll T_0^{3\epsilon}M^{3/2-2\alpha}R^{5/2}$}.
\begin{align*}
R^*&\ll T_0^{3\epsilon}M^{3/2-2\alpha}R^{5/2}\\
&\ll T_0^{4\epsilon}M^{3/2-2\alpha}N^{2-2\beta}\left(N^{2-2\beta}T_0^{1+2\epsilon}M^{1-2\alpha}\right)^{3/4}\\
&\ll x_1^{7(1-\sigma)/2+6\epsilon}M^{-5/4}T_0^{3/4}.
\end{align*}
But then for $M\ge x_1^{(2\sigma+1)/5}T_0^{-1/5}$ we have
\[R^*\ll T_0x_1^{13/4-4\sigma+6\epsilon}.\]
\textbf{Case 2C: $R^*\ll M^{12/5-16\alpha/5}R^{8/5}T_0^{2/5+3\epsilon}$}.
\[R^*\ll M^{12/5-16/5\alpha}N^{16/5-16/5\alpha}T_0^{2/5+5\epsilon}\ll x_1^{16(1-\sigma)/5+5\epsilon}T_0^{2/5}M^{-4/5}\]
But then for $M>x_1^{\sigma-1/16}T_0^{-3/4}$ we have
\[R^*\ll T_0x_1^{13/4-4\sigma+5\epsilon}.\]
Therefore the Lemma holds, provided that we can always combine polynomials to ensure that
\[M>x_1^{\sigma-1/16}T_0^{-3/4},x_1^{(2\sigma+1)/5}T_0^{-1/5},x_1^{2(1-\sigma)/9}T_0^{5/9},x_1^{2(1-\sigma)/3}T_0^{1/3}.\]
We claim that we can always combine polynomials to ensure that the smaller polynomial $M$ satisfies $M\ge \min(x_1^{2/5},x_1/T_0^{1+2\epsilon})$. It suffices to find a product $P$ of polynomials with length in the interval $[\min(x_1/T_0^{1+2\epsilon},x_1^{2/5}),\max(T_0^{1+2\epsilon},x_1^{3/5})]$ since then either $P$ or the complementary product will have suitable length. To obtain $P$ we combine polynomials $S_i$ which are not the exceptional polynomial in decreasing order of length until we find the first product, $S^{(1)}S^{(2)}\dots S^{(r)}$ say, with length $\ge \min(x_1/T_0^{1+2\epsilon},x_1^{2/5})$. Since the exceptional polynomial has length $\le x_1^{3/5}$ such a product exists. We let $S^{(i)}$ have length $L_i$. Therefore $L_1\dots L_{r}>\min(x_1/T_0^{1+2\epsilon},x_1^{2/5})$ and so we have found a suitable product unless $L_1\dots L_{r}>\max(T_0^{1+2\epsilon},x_1^{3/5})$. Since $L_i\le T_0^{1/2+\epsilon}$ for all $i$ this means we must have $r\ge 3$. By construction we must also have that $L_1\dots L_{r-1}<\min(x_1/T_0^{1+2\epsilon},x_1^{2/5})$ and so $L_r\ge (L_1\dots L_r)/(L_1\dots L_{r-1})\ge x_1^{1/5}$. Since by construction $L_i\ge L_r\ge x_1^{1/5}$ for all $i<r$ we have that $L_1\dots L_{r-1}\ge x_1^{(r-1)/5}\ge x_1^{2/5}$. But this is a contradiction with $L_1\dots L_{r-1}<\min(x_1/T_0^{1+2\epsilon},x_1^{2/5})$, and so we must have that $L_1\dots L_r\in [\min(x_1/T_0^{1+2\epsilon},x_1^{2/5}),\max(T_0^{1+2\epsilon},x_1^{3/5})]$.

Since in the case we are considering $N> T_0^{1+2\epsilon}$, we have $M< x_1/T_0^{1+2\epsilon}$. We also have that $M\ge \min(x_1/T_0^{1+2\epsilon},x_1^{2/5})$ by the above construction. Therefore we must have $\mu>5/3$. For $\mu>5/3$ and $\sigma\ge 0.7$, we have
\[M\ge x_1^{2/5}\ge x_1^{11/16}T_0^{-3/4},x_1^{1/2}T_0^{-1/5},x_1^{2(1-\sigma)/9}T_0^{5/9},x_1^{2(1-\sigma)/3}T_0^{1/3}\]
and so the Lemma holds.
\end{proof}

%\newpage
\subsection{Part 3: $3/4\le\sigma\le 1$, $\mu$ small}
We now consider the range $3/4\le \sigma\le 1$, $\mu\le 4/(4\sigma-1)+\epsilon$.

\begin{lmm}\label{slarge}Let $3/4\le\sigma$ and $4/3\le\mu\le\frac{4}{4\sigma-1}+\epsilon$. Then we have
\[R\ll x_1^{1-\sigma}(\log{x_1})^{-4k-2}\]
or
\[R\ll T_0x_1^{5/4-2\sigma+2\epsilon}\]
or
\[R^*\ll T_0x_1^{13/4-4\sigma+8\epsilon}.\]
\end{lmm}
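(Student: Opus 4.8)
The plan is to follow the same strategy as in the proof of Lemma \ref{ssmall}, but with Montgomery's mean value estimate replaced by Huxley's large values estimate (Lemma \ref{HuxLV}) throughout, and with Heath-Brown's $R^*$ bound (Lemma \ref{HBR*}) used to control $R^*$. As always we may assume that $R\ll x_1^{1-\sigma}(\log x_1)^{-4k-2}$ fails, so that the published estimates of Lemma \ref{Published}, the long-polynomial reduction of Lemma \ref{LargePolys}, and the sharper $R$-bounds of Lemma \ref{ZetaBounds} are all available; in particular every $N_i\le T_0^{1/2+\epsilon}$ apart from at most one exceptional polynomial $S_j$ with $T_0^{1/2+\epsilon}\le N_j\le x_1^{3/5}$.

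First I would check that the desired conclusions follow directly from the published estimates \eqref{HuxResult}, \eqref{HB2Result} and \eqref{HB4Result} outside a small critical region of the parameters $(\mu,\sigma)$. The relevant observation is that $T_0 x_1^{13/4-4\sigma}=T_0^{1+\mu(13/4-4\sigma)}$, and that at the endpoint $\mu=4/(4\sigma-1)$ this exponent equals $(12-12\sigma)/(4\sigma-1)$, so \eqref{HB4Result} is exactly sharp there; a similar comparison handles \eqref{HuxResult} and \eqref{HB2Result}. A short computation then isolates the remaining range of $(\mu,\sigma)$ -- a neighbourhood of the critical case $\sigma=3/4$, $x_1=\tau^{9/5}$ highlighted at the start of this section -- in which the published bounds are insufficient.

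In that critical range I would combine the $S_i$ into two Dirichlet polynomials of lengths $M\le N$ and sizes $\alpha,\beta$, so that $MN=x_1=T_0^{\mu}$ and $M^{\alpha}N^{\beta}=x_1^{\sigma}$, and apply Huxley's estimate $R\ll_\delta T_0^{\delta}M^{2-2\alpha}+T_0^{1+\delta}M^{4-6\alpha}$ (and likewise with $N,\beta$), together with Heath-Brown's $R^*$ bound applied to the product of these two factors. Exactly as in Lemma \ref{ssmall} this splits into cases according to which of $RM$, $R^2$, $R^{5/4}T_0^{1/2}$ dominates, and then which of the three resulting terms dominates in the bound for $R^*$; in each case one multiplies the Huxley bounds for the two factors in suitable proportions, or bounds $RR^*$, and checks the exponent of $x_1$ against $5/4-2\sigma$ or $13/4-4\sigma$, using the hypothesis $\mu\le 4/(4\sigma-1)+\epsilon$ at the crucial step and absorbing the accumulated powers of $\log x_1$ and $T_0^{\delta}$ into the allowed $8\epsilon$.

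The main obstacle, just as in Lemma \ref{ssmall}, is the combinatorial step of arranging the split so that the smaller polynomial $M$ exceeds \emph{every} threshold appearing in the case analysis. I would handle this by the same greedy construction: combine the non-exceptional $S_i$ in decreasing order of length until the product first exceeds the required lower bound, and use $N_i\le T_0^{1/2+\epsilon}$ together with the bound $N_j\le x_1^{3/5}$ on the exceptional polynomial to rule out overshooting, obtaining $M\ge\min(x_1^{2/5},x_1 T_0^{-1-2\epsilon})$. The delicate part of the bookkeeping, which I expect to be the hardest, is verifying that this single lower bound on $M$ clears all the thresholds uniformly for $\sigma\ge 3/4$ and $\mu\le 4/(4\sigma-1)+\epsilon$.
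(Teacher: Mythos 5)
Your overall strategy matches the paper's: reduce via the published bounds to the critical sub-region $3/4\le\sigma\le 13/16$, $8/5\le\mu\le 4/(4\sigma-1)+\epsilon$ (the paper uses \eqref{HB4Result} for $\sigma\ge 13/16$ and \eqref{HuxResult} for $\mu\le 8/5$; it does not invoke \eqref{HB2Result} here, but that is cosmetic), then combine the $S_i$ into two polynomials $M\le N$, apply Huxley's bound for $R$, apply Heath-Brown's bound for $R^*$, and close the case analysis.

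The genuine gap is precisely where you flag your worry at the end. The bound $M\ge\min(x_1^{2/5},\,x_1T_0^{-1-2\epsilon})$ does \emph{not} by itself clear every threshold that appears in the $R^*$ case analysis over the whole range $3/4\le\sigma\le 13/16$, $8/5\le\mu\le 4/(4\sigma-1)+\epsilon$. Concretely, in the sub-case where $N>T_0^{1+2\epsilon}$ (so the relevant branch forces $M<x_1T_0^{-1-2\epsilon}$, hence $M$ is taken as the $x_1T_0^{-1-2\epsilon}$-branch of the minimum), the inequality $M\ge x_1^{(1-\sigma)/3}T_0^{1/2-2\epsilon}$ -- needed to discard the $R^{5/4}T_0^{1/2}$ term in Heath-Brown's $R^*$ bound -- requires $\mu\ge 9/(4+2\sigma)$, which is strictly stronger than $\mu\ge 8/5$ for $\sigma<13/16$. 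So for $8/5\le\mu<9/(4+2\sigma)$ the greedy construction simply does not produce a large enough $M$, and your plan as stated has no fall-back. The paper closes this hole with a separate subargument that you have not reproduced: it observes that if the longer polynomial satisfies $N\le T_0^{3/2+2\epsilon}x_1^{5/4-2\sigma}$ then already $R\ll T_0^\epsilon N^{2-2\beta}\ll N T_0^{-1/2+\epsilon}\ll T_0x_1^{5/4-2\sigma+3\epsilon}$; otherwise $M=x_1/N$ is squeezed into a narrow range that forces $\mu\ge 2/(8\sigma-5)$, which together with $\mu\le 9/(4+2\sigma)$ forces $\sigma\ge 53/68$, and in that range \eqref{HuxResult} can be applied once more to finish. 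Without this extra step (or an equivalent one) the proof does not go through, so the ``delicate bookkeeping'' you deferred is a real missing piece, not a routine verification.
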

\begin{proof}
We assume that $R\ll x_1^{1-\sigma}(\log{x_1})^{-4k-2}$ does not hold. Therefore the results of Lemma \ref{Published} apply.

The result follows from published estimates if $\sigma\ge 13/16$ or if $\mu\le 8/5$.

By \eqref{HB4Result} (choosing $\delta=\epsilon$) we have
\[R^*\ll T_0^{12(1-\sigma)/(4\sigma-1)+\epsilon}\ll T_0T_0^{(13-16\sigma)/(4\sigma-1)+\epsilon}.\]
This gives $R^*\ll T_0 x_1^{13/4-4\sigma+\epsilon}$ for $\sigma\ge 13/16$ since $\mu\le 4/(4\sigma-1)+\epsilon$. Thus without loss of generality we assume $\sigma\le 13/16$.

By \eqref{HuxResult} (choosing $\delta=\epsilon$) we have
\[R\ll T_0^{3(1-\sigma)/(3\sigma-1)+\epsilon}.\]
This gives $R \ll T_0x_1^{5/4-2\sigma+\epsilon}$ provided that
\[\mu\le \frac{6\sigma-4}{(3\sigma-1)(2\sigma-5/4)}.\]
For $3/4\le \sigma\le 13/16$ this covers the range $\mu \le 8/5$. Therefore without loss of generality we assume $\mu \ge 8/5$.

We now consider the remaining range $3/4\le \sigma\le 13/16$ and $\mu\ge 8/5$ in the same manner as our argument in Part 2.

We combine the polynomials into two polynomials $M,N$ as in Lemma \ref{ssmall}. Therefore we can choose $M$ such that $\min(x_1T_0^{-1-2\epsilon},x_1^{2/5})\le M\le N$.

We use Lemma \ref{HuxLV} (choosing $\delta=\epsilon$) to give
\[R\ll\min(T_0^{\epsilon}M^{2-2\alpha}+T_0^{1+\epsilon}M^{4-6\alpha},T_0^{\epsilon}N^{2-2\beta}+T_0^{1+\epsilon}N^{4-6\beta}).\]
We split our argument into four cases, dependent on which terms dominate in this estimate.

\textbf{Case 1: $N^{4\beta-2}\le T_0$, $M^{4\alpha-2}\le T_0$}.
\begin{align*}
R&\ll \min(T_0^{1+\epsilon}M^{4-6\alpha},T_0^{1+\epsilon}N^{4-6\beta})\\
&\ll (T_0^{1+\epsilon}M^{4-6\alpha})^{1/2}(T_0^{1+\epsilon}N^{4-6\beta})^{1/2}\\
&\ll T_0x_1^{2-3\sigma+\epsilon}.
\end{align*}
But for $\sigma\ge 3/4$, $2-3\sigma\le \frac{5}{4}-2\sigma$ and so
\[R\ll T_0x_1^{5/4-2\sigma+\epsilon}.\]
\textbf{Case 2: $N^{4\beta-2}\le T_0$, $M^{4\alpha-2}> T_0$}.
\[M^{1-2\alpha}=x_1^{1-2\sigma}(N^{4\beta-2})^{1/2}\le T_0^{1/2}x_1^{1-2\sigma}.\]
Hence
\[R\ll T_0^{\epsilon}M^{2-2\alpha}\le T_0^{1/2+\epsilon}x_1^{1-2\sigma}M\le T_0x_1^{5/4-2\sigma+\epsilon}\]
since $M\le x_1^{1/2}\le T_0$.

\textbf{Case 3: $N^{4\beta-2}> T_0$, $M^{4\alpha-2}> T_0$}.
\[R\ll (M^{2-2\alpha+\epsilon}N^{2-2\beta+\epsilon})^{1/2}=x_1^{1-\sigma+\epsilon}.\]
But, for $\mu\le 4/(4\sigma-1)+\epsilon$, we have:
\[R\ll x_1^{1-\sigma+\epsilon}\le T_0x_1^{5/4-2\sigma+2\epsilon}.\]
\textbf{Case 4: $N^{4\beta-2}> T_0$, $M^{4\alpha-2}\le T_0$}.

By Lemmas \ref{HuxLV} and \ref{HBR*} we have
\[R\ll \min(T_0^{\epsilon}N^{2-2\beta},T_0^{1+\epsilon}M^{4-6\alpha}),\]
\[R^*\ll M^{1-2\alpha}T_0^{\epsilon}(RM+R^2+R^{5/4}T_0^{1/2})^{1/2}(R^*M+R^4+RR^{*3/4}T_0^{1/2})^{1/2}.\]
But $RM\ge R^2,R^{5/4}T_0^{1/2}$ if $R\le M,M^{4}T_0^{-2}$.

We have
\[R^4\ll N^{6-6\beta}T_0^{1+4\epsilon}M^{4-6\alpha}=T_0^{1+4\epsilon}x_1^{6-6\sigma}M^{-2}.\]
Thus $R^2\ll RMT_0^\epsilon$ if $M\ge x_1^{1-\sigma}T_0^{1/6}$ and $R^{5/4}T_0^{1/2}\ll RMT_0^{3\epsilon}$ if $M\ge x_1^{(1-\sigma)/3}T_0^{1/2-2\epsilon}$. Hence, if
\[M\ge \max(x_1^{(1-\sigma)/3}T_0^{1/2-2\epsilon},x_1^{1-\sigma}T_0^{1/6})\] then
\[R^2+R^{5/4}T_0^{1/2}+RM\ll RMT_0^{3\epsilon}.\]
In this case
\[R^*\ll T_0^{5\epsilon}(M^{4-4\alpha}R+M^{3/2-2\alpha}R^{5/2}+M^{12/5-16\alpha/5}T_0^{2/5}R^{8/5}).\]
We now consider separately each term dominating the RHS.

\textbf{Case 4A: $R^*\ll T_0^{5\epsilon}M^{4-4\alpha}R$}.
\[RR^*\ll T_0^{5\epsilon}M^{4-4\alpha}R^2\ll T_0^{7\epsilon}M^{4-4\alpha}N^{4-4\beta}\ll x_1^{4-4\sigma+7\epsilon}.\]
Since $\mu\le 4/(4\sigma-1)+\epsilon$ we have
\[RR^*\ll x_1^{4-4\sigma+7\epsilon}\ll T_0^2x_1^{9/2-6\sigma+9\epsilon}.\]
It follows that either
\[R\ll T_0x_1^{5/4-2\sigma+2\epsilon}\]
or
\[R^*\ll T_0x_1^{13/4-4\sigma+7\epsilon}.\]
\textbf{Case 4B: $R^*\ll T_0^{5\epsilon}M^{3/2-2\alpha}R^{5/2}$}.
\begin{align*}
R^*&\ll T_0^{5\epsilon}M^{3/2-2\alpha}R^{5/2}\\
&\ll T_0^{6\epsilon}M^{3/2-2\alpha}N^{2-2\beta}\left(T_0^{1+4\epsilon}N^{6-6\beta}M^{4-6\alpha}\right)^{3/8}\\
&\ll x_1^{17(1-\sigma)/4+8\epsilon}M^{-5/4}T_0^{3/8}.
\end{align*}
Hence for $M>T_0^{-1/2}x_1^{(4-\sigma)/5}$ we have
\[R^*\ll T_0x_1^{13/4-4\sigma+8\epsilon}.\]
\textbf{Case 4C: $R^*\ll M^{12/5-16\alpha/5}R^{8/5}T_0^{2/5+5\epsilon}$}.
\begin{align*}
R^*&\ll M^{12/5-16\alpha/5}R^{8/5}T_0^{2/5+5\epsilon}\\
&\ll M^{12/5-16/5\alpha}N^{16/5-16/5\beta}T_0^{2/5+7\epsilon}\\
&\ll x_1^{16(1-\sigma)/5+7\epsilon}T_0^{2/5}M^{-4/5}.
\end{align*}
But for $M>T_0^{-3/4}x_1^{\sigma-1/16}$ we have
\[\ll T_0x_1^{13/4-4\sigma+7\epsilon}.\]
Therefore the Lemma holds, provided that we can always combine the polynomials to ensure that
\[M>x_1^{1-\sigma}T_0^{1/6},x_1^{(1-\sigma)/3}T_0^{1/2-2\epsilon},T_0^{-1/2}x_1^{(4-\sigma)/5},T_0^{-3/4}x_1^{\sigma-1/16}.\]
For $3/4\le \sigma\le 13/16$, $8/5\le \mu\le 4/(4\sigma-1)$ we have
\[x_1^{2/5}\ge x_1^{1-\sigma}T_0^{1/6},x_1^{(1-\sigma)/3}T_0^{1/2},T_0^{-1/2}x_1^{(4-\sigma)/5},T_0^{-3/4}x_1^{\sigma-1/16}\]
and
\[\frac{x_1}{T_0^{1+2\epsilon}}\ge x_1^{1-\sigma}T_0^{1/6},T_0^{-1/2}x_1^{(4-\sigma)/5},T_0^{-3/4}x_1^{\sigma-1/16}\]
If
\[\mu\ge \frac{9}{4+2\sigma}\]
then we have
\[\frac{x_1}{T_0^{1+2\epsilon}}\ge x_1^{(1-\sigma)/3}T_0^{1/2-2\epsilon}.\]
and so
\[M>\min(x_1T_0^{-1},x_1^{2/5})>x_1^{1-\sigma}T_0^{1/6},x_1^{(1-\sigma)/3}T_0^{1/2-2\epsilon},T_0^{-1/2}x_1^{(4-\sigma)/5},T_0^{-3/4}x_1^{\sigma-1/16}\]
as required.

We therefore consider $\mu\le 9/(4+2\sigma)$. Since we are considering $N^{4\beta-2}>T_0$, if $N\le T_0^{3/2+2\epsilon}x_1^{5/4-2\sigma}$ then
\[R\ll T_0^{\epsilon}N^{2-2\beta}\ll \frac{N}{T_0^{1/2-\epsilon}}\ll T_0 x_1^{5/4-2\sigma+3\epsilon}.\]
Therefore we only need to consider $N\ge T_0^{3/2+2\epsilon}x_1^{5/4-2\sigma}$, and so (since $NM=x_1$)
\[\frac{x_1}{T_0^{1+2\epsilon}}\le M\le \frac{x_1^{2\sigma-1/4}}{T_0^{3/2+2\epsilon}}.\]
This means we must have
\[\mu\ge \frac{2}{8\sigma-5}.\]
Since we also have
\[\mu\le \frac{9}{4+2\sigma}\]
we must have $\sigma\ge53/68$. But in this range we can use \eqref{HuxResult} again. This gives 
\[R\ll T_0^{3(1-\sigma)/(3\sigma-1)+\epsilon}\ll T_0x_1^{5/4-2\sigma+\epsilon}\]
provided we have
\[\mu\le\frac{8(3\sigma-2)}{(8\sigma-5)(3\sigma-1)}.\]
This, combined with $\mu\le 9/(4+2\sigma)$ means that we must have $\sigma\le (271-\sqrt{193})/336<53/68$. Therefore we have covered all possible values of $\mu\ge8/5$ and $3/4\le\sigma\le 13/16$. Thus the Lemma holds.
\end{proof}
%\newpage
\subsection{Part 4: $3/4\le\sigma\le 1-10^{-22}$, $\mu$ large}
We now consider the range of $\mu\ge4/(4\sigma-1)+\epsilon$, $3/4\le \sigma\le 1-10^{-22}$. We require separate treatment for $\sigma$ very close to 1.
\begin{lmm}\label{mu}Let $3/4\le \sigma\le 1-10^{-22}$, $\mu\ge 4/(4\sigma-1)+\epsilon$. Then either
\[R\ll x_1^{1-\sigma}(\log{x_1})^{-4k-2}\]
or
\[R^*\ll T_0x_1^{13/4-4\sigma+8\epsilon}.\]
\end{lmm}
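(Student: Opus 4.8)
Assume that the first alternative $R\ll x_1^{1-\sigma}(\log x_1)^{-4k-2}$ fails; the goal is then to establish $R^*\ll T_0x_1^{13/4-4\sigma+8\epsilon}$. The plan is to run the same kind of argument as in Lemmas~\ref{ssmall} and~\ref{slarge}, but now using the hypothesis $\mu\ge 4/(4\sigma-1)+\epsilon$ (equivalently, that $x_1$ is a large power of $T_0$) and feeding in the strong zeta--moment bounds of Lemma~\ref{ZetaBounds} wherever a long smooth polynomial is available. As a first reduction we invoke Lemma~\ref{LargePolys}: we may assume all $N_i\le T_0^{1/2+\epsilon}$ except for at most one exceptional $S_j$ with $T_0^{1/2+\epsilon}\le N_j\le x_1^{3/5}$, and any such exceptional polynomial has $j>k$ (since $T_0^{1/2+\epsilon}\gg(3x)^{1/k}$) and hence coefficients identically $1$ or $\log n$, so Lemma~\ref{ZetaBounds} applies to it.

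First I would dispose of the case in which a sufficiently long smooth polynomial is present: combining the two bounds of Lemma~\ref{ZetaBounds} for that polynomial with a Montgomery bound (Lemma~\ref{MontMV}) for the complementary product, and using $N_i^{1-\sigma_i}\le x_1^{1-\sigma}$ together with $\mu\ge 4/(4\sigma-1)+\epsilon$, forces $R$ below $x_1^{1-\sigma}(\log x_1)^{-4k-2}$, so that we are back in the first alternative. When $\sigma$ is close to $1$ this already handles the presence of the exceptional polynomial, since then $\sigma_j$ is itself forced close to $1$ by the length relation, making $N_j^{6-12\sigma_j}$ tiny. In the remaining case every polynomial is comparatively short, and I would split the $2k$ factors into two products $M\le N$ exactly as in the combining construction at the end of the proof of Lemma~\ref{ssmall}, apply Montgomery's and Huxley's estimates (Lemmas~\ref{MontMV}, \ref{HuxLV}) to $M$ and $N$ — and Lemma~\ref{ZetaBounds} to whichever smooth piece of $N$ is long — and then feed the resulting bounds for $R$ into Heath-Brown's $R^*$-estimate (Lemma~\ref{HBR*}). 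The factor $M^{1-2\alpha}$ there is small precisely because $\mu$ is large, and running through the sub-cases ``$RM$, $R^2$ or $R^{5/4}T_0^{1/2}$ dominates'' and ``$R^*M$, $R^4$ or $RR^{*3/4}T_0^{1/2}$ dominates'' as in Cases~2A--2C of Lemma~\ref{ssmall} and~4A--4C of Lemma~\ref{slarge}, using $\mu\ge 4/(4\sigma-1)+\epsilon$ throughout to control the exponents, yields $R^*\ll T_0x_1^{13/4-4\sigma+8\epsilon}$. For $\sigma$ bounded away from $13/16$ from below one checks that the published bound \eqref{HB4Result} combined with the hypothesis on $\mu$ already suffices, without any of the $R^*$-estimate machinery.

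The step I expect to be the main obstacle is the middle regime, where $\sigma$ lies between roughly $13/16$ and $1-10^{-22}$, with $\mu$ near its lower bound $4/(4\sigma-1)+\epsilon$ and with no smooth polynomial long enough for Lemma~\ref{ZetaBounds} to give a decisive saving: there the target exponent $13/4-4\sigma$ is negative, so $R^*$ must be shown to be genuinely small, the only leverage is the combined length of the polynomials against $T_0$, and the interpolation inside Lemma~\ref{HBR*} is essentially tight. This is what pins down the constant and forces the cutoff $\sigma\le 1-10^{-22}$; the complementary range $1-10^{-22}<\sigma\le 1$, where the combined-length argument fails, is dealt with separately in Lemma~\ref{SClose1} via the Vinogradov--Korobov zero-free region and van der Corput's method.
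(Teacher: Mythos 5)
Your plan diverges from the paper's proof in exactly the place where the extra work is needed, and I do not think the divergent part goes through as stated. After the preliminary reductions via the published estimates (\eqref{HB4Result} disposes of $3/4\le\sigma\le 13/16$, \eqref{HB2Result} of $\sigma\ge 25/28$, \eqref{HB1Result} of $\mu\ge 3/(10\sigma-7)+\epsilon$, and your treatment of the exceptional long smooth factor via Lemma~\ref{ZetaBounds} is sound), what remains is $13/16\le\sigma\le 25/28$ with $4/(4\sigma-1)+\epsilon\le\mu\le 3/(10\sigma-7)+\epsilon$. You propose to handle this by the two--polynomial split $M\le N$ and the same sub--case structure as Cases~2A--2C of Lemma~\ref{ssmall} and 4A--4C of Lemma~\ref{slarge}. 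But that machinery is designed for $\mu\le 4/(4\sigma-1)+\epsilon$, and several of its steps depend on that inequality in an essential way. The clearest failure is the analogue of Case~3 of Lemma~\ref{slarge}, where both factors are ``long'' ($N^{4\beta-2}>T_0$ and $M^{4\alpha-2}>T_0$): Huxley's estimate gives only $R\ll x_1^{1-\sigma+\epsilon}$, and in Lemma~\ref{slarge} this is rescued by the bound $x_1^{1-\sigma}\le T_0x_1^{5/4-2\sigma}$, which is \emph{equivalent} to $\mu\le 4/(4\sigma-1)$ — precisely the hypothesis you do not have. The trivial passage to $R^*\ll R^3\log T_0$ then gives $R^*\ll x_1^{3-3\sigma+3\epsilon}$, and $x_1^{3-3\sigma}\ll T_0x_1^{13/4-4\sigma}$ is again equivalent to $\mu\le 4/(4\sigma-1)$, so this also fails. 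No amount of appealing to Lemma~\ref{HBR*} with $M$ and $N$ as you set them up repairs this, because in this sub-case one no longer has a ``short'' factor to play the role of $M$ in Heath-Brown's $R^*$ bound.

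The paper's proof takes a different route through this regime, and it is not a cosmetic difference. Instead of two complementary products $M,N$, one selects a \emph{single} factor $S_{j_1}$ with $\sigma_{j_1}$ maximal (combining with the at most one very short factor if necessary so that its $\sigma'\ge\sigma$), and then \emph{raises this one polynomial} to two different integer powers $M_1=M^{k_1}$ and $M_2=M^{k_2}$, tuned so that $M_1$ sits where Huxley's large-values estimate is strongest for bounding $R$, and $M_2$ sits where Heath-Brown's $R^*$-estimate is strongest. This is the mechanism borrowed from \cite{HBZeroDensity}[pp.\ 228--230], and it is what produces the sub-cases 1A--1C of the paper, each of which ends with an explicit maximum of rational functions of $\sigma$ compared against $1+\mu(13-16\sigma)/4$ using $\mu\le 3/(10\sigma-7)+\epsilon$. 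Your two-product decomposition cannot reproduce this: raising to a power is what lets you put the \emph{same} Dirichlet polynomial at the optimal length for both the $R$ input and the $R^*$ output, whereas with $M$ and $N$ constrained by $MN=x_1$ you are forced into the losing estimate $R\ll x_1^{1-\sigma+\epsilon}$ when both factors are long. So the missing idea is precisely the single-polynomial, power-raising step; without it the middle regime is not closed.

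One further small point: when you invoke Lemma~\ref{LargePolys} at the outset, its conclusion may land you in the alternative $R\ll T_0x_1^{5/4-2\sigma+\epsilon}$, which is not one of the two alternatives in the statement of Lemma~\ref{mu}. That is harmless for Proposition~\ref{prop} (which accepts all three alternatives), but if you want Lemma~\ref{mu} as literally stated you should treat the ``all factors short or one exceptional'' configuration as a standing hypothesis (as the paper does, immediately after Lemma~\ref{LargePolys}) rather than re-deriving it inside the proof.
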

\begin{proof}
We assume that $R\ll x_1^{1-\sigma}(\log{x_1})^{-4k-2}$ does not hold. Therefore the results of Lemmas \ref{ZetaBounds} and \ref{Published} apply.

The result follows from published estimates unless $13/16\le\sigma\le 25/28$ and $\mu\le 3/(10\sigma-7)+\epsilon$.

By \eqref{HB3Result} (choosing $\delta=\epsilon$) we have
\[R^*\ll T_0^{(12-12\sigma)/(4\sigma-1)+\epsilon}=T_0T_0^{(13-16\sigma)/(4\sigma-1)+\epsilon}.\]
Since we have $\mu\ge 4/(4\sigma-1)+\epsilon$, if $\sigma\le 13/16$ this gives $R^*\ll T_0x_1^{13/4-4\sigma+\epsilon}$. Therefore without loss of generality we may assume $13/16\le\sigma$.

By \eqref{HB4Result}, if $\sigma\ge 25/28$ then (choosing $\delta=10^{-23}\epsilon$) we have that 
\[R\ll T_0^{(4-4\sigma)/(4\sigma-1)+10^{-23}\epsilon}.\]
Since $\mu\ge 4/(4\sigma-1)+\epsilon$ and $1-\sigma\ge 10^{-22}$, we have for $\sigma\ge 25/28$ that
\[R\ll T_0^{(4/(4\sigma-1)+\epsilon)(1-\sigma)-\epsilon(1-\sigma)+10^{-23}\epsilon}\ll x_1^{1-\sigma}(\log{x_1})^{-4k-2}.\]
Therefore without loss of generality we may assume $\sigma \le 25/28$.

By \eqref{HB2Result} (choosing $\delta=\epsilon/28$) we have
\[R\ll T_0^{(3-3\sigma)/(10-7\sigma)+\epsilon/28}.\]
Therefore if $\mu \ge 3/(10\sigma-7)+\epsilon$ and $13/16\le \sigma\le 25/28$ we have
\[R\ll T_0^{(3/(10\sigma-7)+\epsilon)(1-\sigma)-\epsilon(1-\sigma)+\epsilon/28}\ll x_1^{1-\sigma}(\log{x_1})^{-4k-2}.\]
Therefore without loss of generality we may assume $\mu\le 3/(10\sigma-7)+\epsilon$.

We now consider the remaining case of $13/16\le \sigma\le 25/28$ and $\mu\le 3/(10\sigma-7)+\epsilon$.

We repeatedly combine any pair of polynomials of length $\le T_0^{10^{-24}\epsilon}$ so at most one polynomial has length $\le T_0^{10^{-24}\epsilon}$.

We then pick a polynomial $S_{j_1}$ of length $N_{j_1}\ge T_0^{10^{-24}\epsilon}$ with $\sigma_{j_1}$ maximal.

If $\sigma_{j_1}\le \sigma$ then, since $\sigma$ is an average of the $\sigma_i$, the polynomial $S_{j_2}$ with length $N_{j_2}\le T_0^{10^{-24}\epsilon}$ must exist and have $\sigma_{j_2}\ge \sigma$. In this case we combine the polynomial $S_{j_1}$ and $S_{j_2}$ to produce a polynomial $S$ of length $N$ and size $\sigma'\ge \sigma$.

If $\sigma_{j_1}\ge\sigma$ then we take $S=S_{j_1}$, (and so $N=N_{j_1}$, $\sigma'=\sigma_{j_1}$). 

We consider separately the cases when $N$ is small and $N$ is large.

\textbf{Case 1:}$N\le T_0^{1/(4\sigma-1)}$.

We follow the analysis of Heath-Brown in \cite{HBZeroDensity}[Pages 228-230].

If $N\le T_0^{1/(8\sigma-2)}$ we raise it to a suitable exponent so that the new polynomial has length $M$ with $T_0^{1/(8\sigma-2)}\le M\le T_0^{1/(4\sigma-1)}$. We note that since $N\ge T_0^{10^{-22}\epsilon}$ this exponent is $O(1)$ and so all the coefficients are still $O_\delta(T_0^\delta)$ for every $\delta>0$.

We raise $M$ to different exponents to use in the $R$ and $R^*$ estimates. We let $M_1=M^{k_1}$ which we will use for bound $R$ and we let $M_2=M^{k_2}$ which we will use to bound $R^*$. We choose $k_2$ such that $M^{k_2}\le T_0^{2/(4\sigma-1)}<M^{1+k_2}$, which means $k_2=2$ or $3$ and $T_0^{4/(12\sigma-3)}\le M_2\le T_0^{2/(4\sigma-1)}$. We pick $k_1=k_2$ when $T_0^{1/(3\sigma-1)}\le M_2$ and $k_1=1+k_2$ for $M_2\le T_0^{1/(3\sigma-1)}$. Using Lemma \ref{HuxLV} and recalling that $\sigma'\ge \sigma$ this gives for any $\delta>0$
\[
R\ll_\delta
\begin{cases}
T_0^\delta M_1^{2-2\sigma},\qquad & T_0^{1/(4\sigma-2)}\le M_1\\
T_0^{1+\delta}M_1^{4-6\sigma},&M_1\le T_0^{1/(4\sigma-2)}.\\
\end{cases}\]
If $M_2\le T_0^{1/(3\sigma-1)}$ then $M_1=M_2^{4/3}$ or $M_2^{3/2}$ (depending on whether $k_1=2$ or $3$). In this case, for either value of $k_1$, we get $R\ll_\delta T_0^\delta(TM_2^{16/3-8\sigma}+M_2^{3-3\sigma})\ll T_0^\delta M_2^{3-3\sigma}$. Using this bound is sufficient for our purposes.

Thus, using the above and Lemma \ref{HuxLV} we get the following bound for any $\delta>0$
\[R\ll_\delta
\begin{cases}
T_0^\delta M_2^{2-2\sigma},\qquad & T_0^{1/(4\sigma-2)}\le M_2\le T_0^{2/(4\sigma-1)}\\
T_0^{1+\delta}M_2^{4-6\sigma},&T_0^{1/(3\sigma-1)}\le M_2\le T_0^{1/(4\sigma-2)}\\
T_0^{\delta}M_2^{3-3\sigma},&T_0^{4/(12\sigma-3)}\le M_2\le T_0^{1/(3\sigma-1)}.
\end{cases}\]
We now consider each range of $M_2$ separately.

\textbf{Case 1A: }$T_0^{1/(4\sigma-2)}\le M_2\le T_0^{2/(4\sigma-1)}$.

For $T_0^{1/(4\sigma-2)}\le M_2\le T_0^{2/(4\sigma-1)}$ we have for $\delta=\epsilon/28$ 
\[R\ll M^{2-2\sigma}T_0^{\epsilon/28}\ll T_0^{(4/(4\sigma-1)+\epsilon)(1-\sigma)+\epsilon/28-\epsilon(1-\sigma)}\ll x_1^{1-\sigma}(\log{x_1})^{-4k-2}\]
since $1-\sigma\ge 3/28$ and $\mu\ge 4/(4\sigma-1)+\epsilon$.

\textbf{Case 1B: }$T_0^{1/(3\sigma-1)}\le M_2\le T_0^{1/(4\sigma-2)}$.

For $T_0^{1/(3\sigma-1)}\le M_2\le T_0^{1/(4\sigma-2)}$ we have $R\ll T_0^{1+\epsilon}M_2^{4-6\sigma}$. This means that $R^{5/4}T_0^{1/2}\ll RM_2$ and $R^2\ll RM_2$, so Lemma \ref{HBR*} simplifies to
\[R^*\ll T_0^{2\epsilon}(RM_2^{4-4\sigma}+R^{5/2}M_2^{(3-4\sigma)/2}+R^{8/5}T_0^{2/5}M_2^{(12-16\sigma)/5}).\]
Using $R\ll T_0^{1+\epsilon}M_2^{4-6\sigma}$ this gives
\[R^*\ll T_0^{5\epsilon}(T_0M_2^{8-10\sigma}+T_0^{5/2}M_2^{(23-34\sigma)/2}+T_0^2M_2^{(44-64\sigma)/5}).\]
Since we are considering $\sigma\ge 13/16$, all the exponents of $M_2$ are negative. Thus since $M_2\ge T_0^{1/(3\sigma-1)}$ we have
\[R^*\ll T_0^{5\epsilon}(T_0^{(7-7\sigma)/(3\sigma-1)}+T_0^{(18-19\sigma)/(6\sigma-2)}+T_0^{(34-34\sigma)/(15\sigma-5)}).\]
But for $\sigma\ge 13/16$ we have
\[\max\left(\frac{7-7\sigma}{3\sigma-1},\frac{18-19\sigma}{6\sigma-2},\frac{34-34\sigma}{15\sigma-5}\right)\le 1+\left(\frac{3}{10\sigma-7}\right)\left(\frac{13-16\sigma}{4}\right).\]
This means that we have $R\ll T_0x_1^{13/4-4\sigma+6\epsilon}$ since $\mu\le 3/(10\sigma-7)+\epsilon$.

\textbf{Case 1C: }$T_0^{4/(12\sigma-3)}\le M_2\le T_0^{1/(3\sigma-1)}$.

For $T_0^{4/(12\sigma-3)}\le M_2\le T_0^{1/(3\sigma-1)}$ we have $R\ll M_2^{3-3\sigma}T_0^{\epsilon}$. Therefore $R\ll M_2$ and Lemma \ref{HBR*} simplifies to
\begin{align*}
R^*&\ll M_2^{1-2\sigma}T_0^{\epsilon}(R^{1/2}R^{*1/2}M_2+R^{5/2}M_2^{1/2}+RR^{*3/8}M_2^{1/2}T_0^{1/4}\\
&\qquad\qquad\qquad+R^{5/8}R^{*1/2}M_2^{1/2}T_0^{1/4}+R^{21/8}T_0^{1/4}+R^{9/8}R^{*3/8}T_0^{1/2}).
\end{align*}
We note that using the trivial bound $R^*\ll (\log{T})R^3$ we have $RR^{*3/8}M_2^{1/2}T_0^{1/4+\epsilon}\gg R^{5/8}R^{*1/2}M_2^{1/2}T_0^{1/4}$. We can therefore drop the fourth term at the cost of a factor $\ll T_0^\epsilon$. This yields
\begin{align*}
R^*&\ll T_0^{4\epsilon}(RM_2^{4-4\sigma}+R^{5/2}M_2^{(3-4\sigma)/2}+R^{8/5}M_2^{(12-16\sigma)/5}T_0^{2/5}\\
&\qquad\qquad\qquad+R^{21/8}T_0^{1/4}M_2^{1-2\sigma}+R^{9/5}M_2^{(8-16\sigma)/5}T_0^{4/5}).
\end{align*}
Substituting in $R\ll M_2^{3-3\sigma+\epsilon}$ we get
\begin{align*}
R^*&\ll T_0^{7\epsilon}(M_2^{7-7\sigma}+M_2^{(18-19\sigma)/2}+M_2^{(36-40\sigma)/5}T_0^{2/5}\\
&\qquad\qquad\qquad+M_2^{(71-79\sigma)/8}T_0^{1/4}+T_0^{4/5}M_2^{(35-43\sigma)/5}).
\end{align*}
For $13/16\le \sigma\le 25/28$ the first four terms always have positive exponents of $M_2$. Thus, using $T_0^{4/(12\sigma-3)}\le M_2\le T_0^{1/(3\sigma-1)}$ we get
\begin{align*}
R^*&\ll T_0^{7\epsilon}(T_0^{(7-7\sigma)/(3\sigma-1)}+T_0^{(18-19\sigma)/(6\sigma-2)}+T_0^{(34-34\sigma)/(15\sigma-5)}\\&\qquad\qquad+T_0^{(69-73\sigma)/(24\sigma-8)}+T_0^{(31-31\sigma)/(15\sigma-5)}+T_0^{(128-124\sigma)/(60\sigma-15)}).
\end{align*}
But for $13/16\le \sigma\le 25/28$ we have
\begin{align*}
\max&\Biggl(\frac{7-7\sigma}{3\sigma-1},\frac{18-19\sigma}{6\sigma-2},\frac{34-34\sigma}{15\sigma-5},\\
&\qquad\frac{69-73\sigma}{24\sigma-8},\frac{31-31\sigma}{15\sigma-5},\frac{128-124\sigma}{60\sigma-15}\Biggr)\le 1+\left(\frac{13-16\sigma}{4}\right)\left(\frac{3}{10\sigma-7}\right).
\end{align*}
Thus $R^*\ll T_0x_1^{13/4-4\sigma+8\epsilon}$ since $\mu\le \frac{3}{10\sigma-7}+\epsilon$.

Putting these estimates together covers all possible values of $\mu\ge 4/(4\sigma-1)+\epsilon$ and $3/4\le \sigma\le 1-10^{-22}$.

\textbf{Case 2: }$N\ge T_0^{1/(4\sigma-1)}$.

We will choose $k\ge 7$ so that \eqref{MuPieceBound} implies that for $i\le k$ we have
\[N_i\le (3x)^{1/7}\le T_0^{1/3}\le T_0^{1/(4\sigma-1)}.\]
Therefore the polynomial selected with this `long' length must either be one with all coefficients $1$ or $\log(n)$ and length $N=N_{j_1}$, or it must be a the combination of such a polynomial with another polynomial of length $N_{j_2}\le T_0^{10^{-24}\epsilon}$ (so $N=N_{j_1}N_{j_2}$).

By Lemma \ref{ZetaBounds} either
\[R\ll x_1^{1-\sigma}(\log{x_1})^{-4k-2}\]
or
\[R\ll_\delta T_0^{2+\delta}N_{j_1}^{6-12\sigma_{j_1}}\]
for any $\delta>0$.

Without loss of generality we assume $R\ll_\delta T_0^{2+\delta}N_{j_1}^{6-12\sigma_{j_1}}$.

If $N=N_{j_1}N_{j_2}$ then since $N_{j_2}\ll T_0^{10^{-24}\epsilon}$ we have
\begin{align*}
R&\ll R(T_0^{6\times10^{-24}\epsilon} N_k^{6-12\sigma_k})\\
&\ll T_0^{2+10^{-23}\epsilon}N^{6-12\sigma'}.
\end{align*}
The same result clearly holds if $N=N_{j_1}$.

Thus, since $N\ge T_0^{1/(4\sigma-1)}$, we have
\begin{align*}
R&\ll T_0^{2-6(2\sigma-1)/(4\sigma-1)+10^{-23}\epsilon}\\
&\ll T_0^{(1-\sigma)(4/(4\sigma-1)+\epsilon)-\epsilon(1-\sigma)+10^{-23}\epsilon}\\
&\ll x_1^{(1-\sigma)}(\log{x_1})^{-4k-2}
\end{align*}
since
\[\mu>\frac{4}{4\sigma-1}+\epsilon,\qquad 1-\sigma\ge 10^{-22}.\]
\end{proof}

%\newpage
\subsection{Part 5: $1-10^{-22}\le \sigma\le 1$}
We now consider the final range, when $1-10^{-22}\le \sigma \le 1$. We split the argument into two cases - when $\sigma$ is exceptionally close to one, and so we can use Vinogradov's bound, and the remaining case.
\begin{lmm}\label{SClose1}
Let $1-10^{-22}\le\sigma\le 1$, $\mu\ge 4/(4\sigma-1)+\epsilon$. Then we have
\[R\ll x_1^{1-\sigma}(\log{x_1})^{-4k-2}\]
\end{lmm}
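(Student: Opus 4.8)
The plan is to split on how close $\sigma$ is to $1$, using the Vinogradov--Korobov zero-free region in the extreme case and only polylogarithmic-loss large-value estimates in the remaining range. Throughout write $\eta=\eta(x_1)=C_0(\log x_1)^{-2/3}(\log\log x_1)^{-1/3}$ as in \eqref{EtaDef}, and assume $R\neq 0$ (otherwise there is nothing to prove). First I would extract the following consequence of the argument behind \eqref{VingradovSigmaNearOne}: any $S_{j'}$ with $N_{j'}>x_1^{1/40k}$ has $\sigma_{j'}\le 1-3\eta/2\le 1-\eta$, valid once $x_1$ is large since $\eta\gg 1/\log x_1$. Writing $\sigma=\sum_i w_i\sigma_i$ with $w_i=\log N_i/\log x_1$, and using that the at most $2k$ polynomials with $N_i\le x_1^{1/40k}$ carry total weight $\le 2k/(40k)=1/20$ (so that $\sum_{N_i>x_1^{1/40k}}w_i\ge 19/20$), this gives $1-\sigma=\sum_i w_i(1-\sigma_i)\ge\frac{19}{20}\eta$. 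Hence if $\sigma>1-\frac{19}{20}\eta$ we reach a contradiction, so $R=0$ and the Lemma holds; this disposes of the case in which $\sigma$ is \emph{exceptionally} close to $1$.

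It remains to treat $1-10^{-22}\le\sigma\le 1-\frac{19}{20}\eta$, where $1-\sigma\ge\frac{19}{20}\eta$ and therefore
\[x_1^{1-\sigma}\ \ge\ \exp\!\Big(\tfrac{19}{20}C_0(\log x_1)^{1/3}(\log\log x_1)^{-1/3}\Big),\]
which beats every fixed power of $\log x_1$ once $x_1$ is large. The crucial point is that, since $1-\sigma$ may be as small as $\frac{19}{20}\eta\to 0$, none of the $T_0^{\delta}$-type losses in Lemma \ref{Published} can be afforded: I need a bound on $R$ losing only $(\log x_1)^{O_k(1)}$. Fortunately the ingredients are available in this form. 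The coefficients of every $S_i$, and of every product of the $S_i$, are bounded by a fixed-order divisor function times $\log n$, so any sub-product $S$ of length $N$ has $\sum_{n\sim N}|a_n|^2\ll N(\log N)^{O_k(1)}$; feeding this into the first statements of Lemmas \ref{MontMV}, \ref{HuxLV} and \ref{HBR*}, together with Lemma \ref{ZetaBounds}, loses only powers of $\log x_1$, and the device of raising a short polynomial to an $O_k(1)$-th power to normalise its length introduces only $O_k(1)$ further divisor-convolution factors.

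I would then re-run the proof of the estimate underlying \eqref{HB2Result} (Heath-Brown's zero-density argument, via Huxley's large-value estimate and Heath-Brown's $R^*$ bound, together with the combining of the $S_i$ into two or three sub-polynomials of prescribed length and size, exactly as in the proofs of Lemmas \ref{Published}, \ref{ssmall} and \ref{mu}) in this lossless form, obtaining
\[R\ \ll_k\ (\log x_1)^{O_k(1)}\;T_0^{\,(4-4\sigma)/(4\sigma-1)}\qquad(\sigma\ge 25/28),\]
which applies since $1-10^{-22}>25/28$. As $x_1=T_0^{\mu}$ with $\mu\ge 4/(4\sigma-1)+\epsilon$ we have $(4-4\sigma)/(4\sigma-1)=4(1-\sigma)/(4\sigma-1)\le(\mu-\epsilon)(1-\sigma)$, hence
\[R\ \ll_k\ (\log x_1)^{O_k(1)}\,x_1^{1-\sigma}\,T_0^{-\epsilon(1-\sigma)}\ \ll\ x_1^{1-\sigma}(\log x_1)^{-4k-2},\]
the last step because $T_0^{\epsilon(1-\sigma)}\ge T_0^{\frac{19}{20}\epsilon\eta}=\exp\!\big(\tfrac{19\epsilon C_0}{20\mu}(\log x_1)^{1/3}(\log\log x_1)^{-1/3}\big)$ dominates any fixed power of $\log x_1$.

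The hard part will be the bookkeeping just invoked: verifying that every step of the large-value/zero-density argument for $\sigma\ge 25/28$ can in fact be carried through with only a $(\log x_1)^{O_k(1)}$ loss and \emph{no} power $x_1^{\delta}$ or $T_0^{\delta}$, uniformly over the remaining ranges $\mu\in[4/(4\sigma-1)+\epsilon,\,19/9]$ and $T\in[T_1,T_0]$. The only genuine sources of a power loss in the earlier sections were coefficients of size $O_\delta(T_0^\delta)$ — absent here, our coefficients being divisor-bounded — and the length-normalising exponent, which is harmless since it is $O_k(1)$; so the real effort is to confirm that, after tracking the divisor-convolution $L^2$-bounds and the reflection/iteration in Heath-Brown's argument, the accumulated polylogarithmic powers still close against the super-polylogarithmic factor $x_1^{1-\sigma}\ge x_1^{\frac{19}{20}\eta}$.
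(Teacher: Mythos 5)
Your opening move matches the paper's: split on whether $\sigma$ is within $O(\eta)$ of $1$, and in the extreme case use the Vinogradov--Korobov bound behind \eqref{VingradovSigmaNearOne} to force $R=0$ (your weighting argument giving $1-\sigma\gg\eta$ is the same as the paper's Case~2). The disagreement is entirely in the intermediate range $1-10^{-22}\le\sigma\le 1-c\eta$, and there your plan has a genuine gap.

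You want to deduce the lemma from a \emph{lossless} form of \eqref{HB2Result}, namely $R\ll_k(\log x_1)^{O_k(1)}T_0^{(4-4\sigma)/(4\sigma-1)}$, and you rightly observe that the $T_0^\delta$ version from Lemma~\ref{Published} is fatal here because $1-\sigma$ may be as small as $\sim\eta$, so any fixed-power $T_0^\delta$ loss swamps the slack $T_0^{-\epsilon(1-\sigma)}$. But establishing that lossless version is exactly what you explicitly defer as ``the hard part'' and do not carry out. It is not a routine bookkeeping pass: one must re-run the entire zero-density machinery (Lemma~\ref{HuxLV}, Lemma~\ref{HBR*}, the combining/raising constructions in Lemma~\ref{Published}, and Lemma~\ref{ZetaBounds} for the long $1$- or $\log$-coefficient factor) tracking only $(\log x_1)^{O_k(1)}$ losses throughout, uniformly in $T$ and in the degenerating parameter $1-\sigma$. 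Nothing in the paper (nor in the references as stated here) gives you that statement, and you have not supplied it.

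The paper sidesteps this entirely by noticing that a much blunter bound suffices. When all factors are short (Case~1A) it isolates one $S_i$ with $\sigma_i\ge\sigma$, raises it to a bounded power to length $Y\in[T_0^{9/16},T_0^{5/8}]$, and applies Huxley's estimate alone with a fully explicit $(\log x_1)^{60000}$ loss; since $\sigma\ge 17/18$ forces the $Y^{2-2\sigma}$ term to dominate, $R\ll x_1^{15(1-\sigma)/16}(\log x_1)^{60000}$, and the spare factor $x_1^{-(1-\sigma)/16}\ge x_1^{-\eta/16}$ already beats the log loss once $\mu\ge 4/3$. When a factor has length $>x_1^{1/k}$ (Case~1B, necessarily coefficients $1$ or $\log n$), a van der Corput bound on that factor forces $\sigma\le 1-1/(k2^{k+2})<1-10^{-22}$ or $R=0$, contradicting the hypothesis. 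You have no analogue of Case~1B; in your scheme that long factor would have to be absorbed into the lossless Heath-Brown argument via Lemma~\ref{ZetaBounds}, which only adds to the unverified bookkeeping. In short: the target bound you want is plausible but unproven, whereas the paper's weaker bound is trivially auditable and is what actually closes the case.
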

\begin{proof}
We recall that
\[\eta=\eta(x_1)=C_0(\log{x_1})^{-2/3}(\log\log{x_1})^{-1/3}\]
for some constant $C_0>0$ from \eqref{EtaDef}.

\textbf{Case 1: $\sigma\le 1-\eta$}

We consider separately the case when all polynomials are small.

\textbf{Case 1A:} $N_i\le x_1^{1/k}$ $\forall$ $i$

We repeatedly combine pairs of polynomials of length $\le x_1^{1/2k}$. Thus without loss of generality we assume all polynomials have length $\in$ $[x_1^{1/2k},x_1^{1/k}]$, except for possibly one polynomial with length $\le x_1^{1/2k}$. We combine this polynomial with one of the remaining ones, so all polynomials have length $\in$ $[x_1^{1/2k},x_1^{3/2k}]$.

Since we have combined at most $2k$ polynomials, all remaining polynomials must have coefficients which are $\ll (\log{x_1})d_{2k}(n)$.

We pick a polynomial which has $\sigma_i\ge \sigma$. We raise this polynomial to an exponent so that it has length $Y$ with $T_0^{9/16}\le Y\le T_0^{5/8}$. This is possible if the polynomial's original length was $\le T_0^{1/16}$. This is the case provided $x_1^{3/2k}\le T_0^{1/16}$. Thus if we now choose
\begin{equation}\label{KChoice}
k=60
\end{equation}
then this is satisfied, since $\mu\le 19/9$. We note that we raise the polynomial to an exponent $\le 75$, and so all the coefficients of this new polynomial of length $Y$ are $\ll (\log{x_1})^{75}d_{9000}(n)$.

Using Lemma \ref{HuxLV} we have
\begin{align*}
R&\ll (\log{x_1})^2(Y^{2-2\sigma_i}+T_0Y^{4-6\sigma_i})\left(1+\frac{\sum_Y^{2Y}(\log{x_1})^{150}d_{9000}(n)^2}{Y}\right)^3\\
&\ll (\log{x_1})^{60000}(Y^{2-2\sigma}+T_0Y^{4-6\sigma})
\end{align*}
But for $\sigma\ge 17/18$ we have $T_0^{9(4\sigma-2)/16}\ge T_0$ and so $Y^{4\sigma-2}\ge T_0$. This means that $Y^{2-2\sigma}\ge T_0Y^{4-6\sigma}$. Hence
\begin{align*}
R&\ll Y^{2-2\sigma}(\log{x_1})^{60000}\\
&\ll T_0^{5(1-\sigma)/4}(\log{x_1})^{60000}\\
&\ll x_1^{15(1-\sigma)/16}(\log{x_1})^{60000}\\
&\ll x_1^{1-\sigma}\exp(-C_0(\log{x_1})^{1/3}(\log\log{x_1})^{-2/3}/16+60000\log\log{x_1})\\
&\ll x_1^{1-\sigma}(\log{x_1})^{-4k-2}
\end{align*}
Since $\sigma\le 1-\eta$, $\mu\ge 4/3$.

\textbf{Case 1B:}$\exists$ $N_j>x_1^{1/k}$

By \eqref{MuPieceBound} this polynomial must have all coefficients 1 or $\log(n)$. We first consider the case when all coefficients are $1$.

By Van-der-Corput's method of exponential sums (see the proof of \cite{Titchmarsh}[Theorem 5.14], for example) we have for any $l\in\mathbb{Z}$ and $\alpha=1-l/(2^l-2)$
\[\sum_N^{2N}n^{-\alpha-it}\ll T_0^{1/(2^l-2)+\epsilon}\]
Thus
\begin{align*}
|S_j|=\left|\sum_{N_j}^{2N_j}n^{-c-it}\right|&\ll N_j^{-c+\alpha}T_0^\epsilon\sup_{N\in [N_j/2,N_j]}\left|\sum_{N}^{2N}n^{-\alpha-it}\right|\\
&\ll N_j^{-l/(2^l-2)}T_0^{1/(2^l-2)+2\epsilon}.
\end{align*}
The same result folds for a polynomial with all coefficients $(\log{n})$ by partial summation.

We have $N_j>x_1^{1/k}>T_0^{1/k}$ so
\[|S_j|\ll N_j^{-(l-k)/(2^l-2)+2k\epsilon}\]
Thus, choosing $l=k+1$ we have
\[|S_j|\ll N_j^{-2^{-k-1}+2k\epsilon}\]
and so (for $\epsilon\le 2^{-k-3}/k$) we must have 
\[\sigma_j\le 1-2^{-k-1}+2k\epsilon\le 1-2^{-k-2}\]
since $|S_i|\ll N_i^{-1+\sigma_i}$. But
\begin{align*}
x_1^\sigma&=\prod_iN_i^{\sigma_i}\\
&\ll N_j^{\sigma_j}\prod_{i\ne j}N_i\\
&=x_1N_j^{\sigma_j-1}\\
&\ll x_1(x_1^{1/k})^{-2^{-k-2}}\\
& \ll x_1^{1-2^{-k-2}/k}.
\end{align*}
Hence
\[\sigma\le 1-\frac{1}{k2^{k+2}}\]
or $R=0$.

By \eqref{KChoice} we have $k=60$, so this means that
\[\sigma\le 1-10^{-22}\qquad\text{or}\qquad R=0\]
and so we are done.

\textbf{Case 2: $\sigma\ge1-\eta$}

By the same argument as in Case 1 of Lemma \ref{LargePolys}, we get the result \eqref{VingradovSigmaNearOne}
\[R=0\qquad\text{or}\qquad\sigma_{j'}\le 1-3\eta/2\]
for any polynomial with $N_{j'}\gg x_1^{1/40k}$.

Thus either $R=0\ll T_0x_1^{2\sigma-5/4+\epsilon}$ or (since there are only $2k$ polynomials $S_i$) 
\[x_1^{\sigma}=\prod_{i=1}^{2k}N_i^{\sigma_i}\le \left(\prod_{\substack{1\le i\le 2k\\N_i\le x_1^{1/40k}}}N_i\right)\left(\prod_{\substack{1\le i\le 2k\\N_i\ge x_1^{1/40k}}}N_i^{1-3\eta/2}\right)\le x_1^{1-\eta}\]
and so we must have $\sigma<1-\eta$.
\end{proof}
This covers all the different cases, and so the main result holds.

\section{Acknowledgment}
I would like to thank my supervisor, Prof. Heath-Brown, for suggesting this problem, for providing a large number of helpful comments as well as encouragement. I would also like to thank Christian Elscholtz and Yoichi Motohashi of making me aware of the existing result of Peck in this subject.
\newpage
\section{Comments and Further Work}
Using the above argument we obtain the best possible result in some sense. Without improving the existing estimates for large values of Dirichlet Polynomials, it appears an exponent of $5/4+\epsilon$ is the smallest obtainable using the method presented.

The critical case in the argument appears to be when $\sigma_i=3/4$ $\forall i$. If there are 4 polynomials all of equal length (i.e. length $x_1^{1/4}$) then throughout the range $x_1^{11/20}\le\tau\le x_1^{5/8}$ Proposition \ref{mainprop} fails to hold for any exponent $\le 5/4$ using the estimates for the frequency of large values of Dirichlet Polynomials when $\sigma=3/4$. In this region we use the strongest known such bounds, and so an improvement to the result would require a stronger large values estimate when $\sigma=3/4$. Improving the estimates at $\sigma=3/4$ appears to be difficult. Several improvements have been made to Montgomery's and Huxleys estimates given in Lemmas \ref{MontMV} and \ref{HuxLV} for other ranges of $\sigma$, but $\sigma=3/4$ appears to be the hardest to improve.  The bounds given are also tight in the region $\sigma\ge 25/28$, but it appears for $\sigma$ large there is more flexibility to improve the large value estimates. For $\sigma$ large there are various stronger estimates for $R$ which have not been employed here.

The bound obtained is tight in $\tau$ only for $x^{11/20}\le\tau\le x^{5/8}$ or $\tau=x^{1/2}$, which is far from the full range. Therefore the above argument implies a slightly stronger result, where we have
\[\sum_{p_n\le x}f(d_n)\ll x^{5/4+\epsilon}\]
for some function $f(t)\ge t^2$ and $f(t)\ge t^{2+\epsilon_1}$ for the range when $\tau\le x^{1/2-\epsilon'}$, or $x^{1/2+\epsilon'}\le\tau\le x^{11/20-\epsilon'}$, or $\tau\ge x^{5/8+\epsilon'}$ for some $\epsilon'>0$.

It might be possible to improve the result by combining the method with sieve ideas. This was successfully employed by Baker, Harman and Pintz \cite{BakerEtAl} in their result $d_n\ll p_n^{21/40}$. Employing a suitable sieve might enable one to avoid the critical case in our argument when $\sigma=3/4$ and $x^{11/20}\le\tau\le x^{5/8}$, thereby enabling us to improve on the overall result.

Yu \cite{Yu} employed a large double sieve to the problem when assuming the Lindel\"of hypothesis. Although it appears that following exactly the method he employed does not improve the exponent when the Lindel\"of assumption is dropped, the large double sieve could potentially aid the argument in another form. Following Yu's argument but using the bound $\zeta(1/2+it)\ll t^{\theta}$ gives a bound which approaches $x^{2+\epsilon}$ continuously as $\theta$ approaches 0. Using the best existing estimates of the order of $\zeta(1/2+it)$ (which are slightly smaller than 1/6) fails to produce an exponent better than $5/4$, and the argument does not seem to avoid the complications of the critical case in our argument.
\newpage
\addcontentsline{toc}{section}{References}

%\bibliographystyle{acm}
%\bibliography{bibliography}
\end{document}